\documentclass[12pt]{article}
\frenchspacing

\usepackage{mathrsfs}
\usepackage{latexsym}
\usepackage{amsmath, amsthm, amssymb, amstext}
\usepackage{authblk,color}
\usepackage{cite}

\usepackage{enumitem}

\usepackage{hyperref}
\input amssym.def
\input amssym

\oddsidemargin=-0in
\evensidemargin=-0in
\textwidth=6.9in

\headheight=0pt
\headsep=0pt
\topmargin=-0.4in
\textheight=9.5in

\renewcommand \it[1] {\textit{#1}}

\newcommand \al {\alpha}
\newcommand \g {\gamma}

\renewcommand \l {\lambda}

\renewcommand \c {\mathbb C}

\newcommand \f {\mathbb F}
\newcommand \mf {\mathcal F} 

\newcommand \si {\sigma}

\newcommand \ol[1]{\overline{#1}}
\newcommand \wh[1]{\widehat{#1}}

\newcommand \lrg[1]{\langle #1 \rangle}
\newcommand \abs[1]{\lvert #1 \rvert}
\newcommand \bigabs[1]{\big \lvert #1 \big \rvert}
\newcommand \lrabs[1]{\left \lvert #1 \right \rvert}
\newcommand \nid{\, : \, }

\newcommand \four{Fourier transform}
\newcommand \gdual{$G$-dual set}
\newcommand \sgd{normalized \gdual}
\newcommand \fag{finite abelian group}
\newcommand \fg{finite group}
\newcommand \pn{perfect nonlinear}
\newcommand \gpn{$G$-perfect nonlinear}
\newcommand \pnf{perfect nonlinear function}
\newcommand \gpnf{$G$-perfect nonlinear function}
\newcommand \gbent{$G$-bent}
\newcommand \gbf{$G$-bent function}
\newcommand \ic{irreducible character}

\newcommand \gi{G \setminus \{ 1_G \}}

\newcommand \dset{difference set}
\newcommand \dsx{difference set of $X$}
\newcommand \rdf{related difference family}
\newcommand \ghrdf{$(G, H)$-\rdf}

\DeclareMathOperator{\irr}{Irr}

\newtheorem{thm}{Theorem}[section]
\newtheorem{defn}[thm]{Definition}
\newtheorem{lemma}[thm]{Lemma}

\newtheorem{cor}[thm]{Corollary}

\newtheorem{ex}[thm]{Example}
\newtheorem{nota}[thm]{Notation}
\newtheorem{re}[thm]{Remark}

\numberwithin{equation}{section}
\numberwithin{table}{section}


\title{Nonlinear functions and difference sets on group actions}

\begin{document}

\author[1]{Yun Fan} \author[2]{Bangteng Xu}
\affil[1]{{\small School of Mathematics and Statistics},
 {\small Central China Normal University, Wuhan 430079, China}}
\affil[2]{{\small Department of Mathematics and Statistics},
{\small Eastern Kentucky University, Richmond, KY 40475, USA}}

\date{}

\insert\footins{\footnotesize
\it{Email addresses}:  yfan@mail.ccnu.edu.cn (Y. Fan),
bangteng.xu@eku.edu (B. Xu, corresponding author)}

\maketitle

\begin{abstract}

Let $G$, $H$ be finite groups and let $X$ be a finite $G$-set. 
$G$-perfect nonlinear functions from $X$ to $H$ 
have been studied in several papers. They have more interesting properties 
than perfect nonlinear functions from $G$ itself to $H$. 
By introducing the concept of a $(G, H)$-related difference family of 
$X$, we obtain a characterization of $G$-perfect nonlinear 
functions on $X$. When $G$ is abelian, we characterize a $G$-difference set of $X$
by the Fourier transform on a normalized $G$-dual set $\widehat X$. 
We will also investigate  
the existence and constructions of $G$-perfect nonlinear functions and 
$G$-bent functions. Several known results in \cite{ad, dp, ph, xu1}
are direct consequences of our results.

\vskip 8pt

\it{Key Words\,:} $G$-perfect nonlinear functions; $G$-difference sets; $(G, H)$-related
difference families; normalized $G$-dual sets; Fourier transforms 
\vskip 8pt

\it{Mathematics Subject Classification:} 05B10, 05E18, 65T50, 94E18
\end{abstract}


\section{\large Introduction 
\label{sect-int} }


Perfect nonlinear functions are actively studied by many researchers
(cf. \cite{cd, dp, fx, lsy, ph, p06, pp, pott, s02, xu1}). 
They can be used to construct DES-like cryptosystems that are resistant to differential
attacks. \gpnf s have been studied in several papers. They are natural generalizations
of the classical \pnf s with the XOR operations replaced by group actions, and have
more interesting properties. They can also be 
used to construct generalized DES-like cryptosystems (where the XOR operation is 
replaced by a group action) that are resistant to modified differential attacks.
For a summary of the background on cryptosystems and group action modifications,
the reader is referred to \cite{dp}. Pott \cite{pott} mentioned that ``It seems that in
most applications (in particular in cryptography) people use nonlinear functions on
finite fields. However, there is no technical reason why you should restrict yourselves 
to this case".

In their original research on \gbent\ and \gpnf s on a $G$-set $X$,
Poinsot et al. \cite{ph, p09}  
characterize these functions by \four s of functions on $G$.  Davis et al. \cite{dp}
characterize \gpnf s on a $G$-set $X$ by $G$-\dset s of $X$, and construct \gpnf s
by constructing $G$-\dset s, without using \four s at all. For a \fag\ $G$ and a 
$G$-set $X$, a \gdual\ $\wh X$ of $X$ is introduced in \cite{fx}, which plays a role
similar to the dual group $\wh G$. Then the \four s of functions 
 on $X$ are defined as functions on $\wh X$, and used to characterize  
\gbent\ and \gpnf s in \cite{fx}. 

In this paper we will first give a characterization of \gpnf s on a $G$-set $X$
 by $(G, H)$-related difference families of $X$ (see Theorem \ref{pnf-rdf}).
As applications of this characterization,
we obtain several known results in \cite{ad, dp, xu1} as immediate consequences.
When $G$ is a \fag, we introduce the concept of a \sgd\ (see Definition \ref{defn-sgd}),
and use it to characterize a $G$-\dset\ of $X$ (see Theorem \ref{thm-set-char}).
Using the method developed in this paper, we are able to give a very short and easy proof
of an interesting result of Poinsot et al. \cite{ph} (see Corollary \ref{cor-reg}).
Furthermore, by applying the \four s on \sgd s and 
the characterizations of \gpnf s established in \cite{fx}, we investigate the existence
and constructions of \gpnf s and \gbf s on $X$(see Theorems \ref{thm-order2}, 
\ref{thm-pnf}, \ref{thm-pnf-gen}, and \ref{thm-klein4}). The methods developed in 
\cite{fx} and in this paper provide new insights into the study of \gbent\ and \gpnf s.  

The rest of the paper is organized as follows. 
In Section \ref{related df} we present the characterization of $G$-perfect nonlinear functions
by $(G, H)$-related difference families, and discuss its applications. 
In Section \ref{sect-pre}, after the brief recall of some results 
 in \cite{fx} that are needed later, we
introduce the notion of a \sgd, and study its basic properties. 
The characterization of $G$-\dset s by \sgd s is given in Section \ref{sect-set}.
Sections \ref{sect-klein} and \ref{sect-klein-b} are devoted to 
the study of the existence and constructions of \gpnf s and \gbf s 
for the Klein four group $G$, respectively.


\section{\large Perfect nonlinear functions and related deference families
\label{related df}}


In this section we introduce the concept of a 
$(G,H)$-related difference family (see Definition \ref{rdf}), and use it 
to characterize $G$-perfect nonlinear functions (see Theorem \ref{pnf-rdf}).
Applications of Theorem \ref{pnf-rdf} will also be discussed, and several known 
results in \cite{ad, dp, xu1} are obtained as direct consequences.
Throughout the paper, the following notation will be used. 

\begin{nota}
\label{assume}\rm
Let $G$ be a finite group of order $m$, and $X$ a finite $G$-set of cardinality $v$. 
That is, there is a map $G\times X\to X$, $(\alpha,x)\mapsto\alpha x$,
such that for any $x\in X$,  $(\alpha\beta) x = \alpha(\beta x)$ 
for all $\alpha,\beta\in G$, and $1_G x = x$,
where $1_G$ is the identity element of $G$. Let $H$ be a finite group. 
\end{nota}

For the fundamentals of group actions, the reader is referred to \cite{ab}.

Let $f:X\to H$ be a function.
For any $\alpha\in G$ the {\em derivative of $f$ in direction $\alpha$},
denoted by $f'_\alpha$, is defined by
$$
f'_\alpha:~ X\to H, \quad  x\mapsto f(\alpha x)f(x)^{-1}.
$$ 
The cardinality of a set $S$ is denoted by $\abs{S}$. As usual (cf. \cite{ph,dp}), 
the function $f$ is said to be {\em $G$-perfect nonlinear} if $|H|$ divides $|X|$ and
\begin{equation} 
\label{pnf}
\big| {f'_\alpha}^{-1}(\sigma)\big|=\frac{|X|}{|H|},\qquad
\forall~\alpha\in \gi ~~\forall~\sigma\in H,
\end{equation}
where ${f'_\alpha}^{-1}(\sigma) :=\{x\in X \nid f'_\alpha(x)=\sigma\}$
is the inverse image of $\sigma$ in $X$.

As generalizations of the usual difference sets and the relative difference sets of groups,
Davis et al. \cite{dp} introduced the following definition. 
Note that $G \times H$ acts on $X \times H$ by 
$(\alpha, \sigma)(x, h) = (\alpha x, \sigma h)$, where 
$(\alpha, \sigma) \in G \times H$ and $(x, h) \in X \times H$.

\begin{defn} [{Cf. \cite[Definition 3.2]{dp}}] \rm
\begin{enumerate}
\item[$($i$)$] A subset $D$ of cardinality $k$ of $X$ is called a 
{\em $G$--$(v,k,\ell)$ difference set} of $X$ 
if for any $\alpha\in \gi$ ,
there are exactly $\ell$ elements $(x_1, x_2)$ of $D\times D$ such that 
$\alpha x_1=x_2$.

\item[$($ii$)$] A subset $R$ of cardinality $k$ of $X\times H$ is called a 
{\em $G\times H$--$(v,|H|,k,\ell)$-relative difference set of 
$X\times H$ relative to $\{1_G\}\times H$} if (i)
for every $(\alpha,\sigma)\in (G\times H) \setminus (\{1_G\}\times H)$
there are exactly $\ell$ elements $\big((x_1, h_1), (x_2, h_2)\big)\in R\times R$ 
such that $(\alpha, \sigma)(x_1, h_1) = (x_2, h_2)$ and (ii) if $(x,h),(x,h') \in R$,
 then $h = h'$. 
Such a $G\times H$--$(v, |H|, k, \ell)$-relative difference set is said to be
{\em semiregular} if $v = k$.
\end{enumerate}
\end{defn}

Davis et al. \cite{dp} characterized
$G$-perfect nonlinear functions  from $X$ to $H$ by $G$-difference sets 
(when $|H|=2$, see Corollary \ref{cor-dp-2-pnf} below)
and by semiregular relative difference sets (see Corollary \ref{c-dp4} below).

Note that for any $\al \in G$ and subsets $C, D$ of the $G$-set $X$, 
\begin{equation}
\label{alpha x=y}
\big|\{(x,y)\in C\times D \nid \alpha x=y\}\big|=|\alpha C \cap D|,
\end{equation}
where $\alpha C := \{\alpha x \nid x\in C\}$.
Thus, a subset $D$ of cardinality $k$ of $X$ is a $G$--$(v,k,\ell)$ difference set 
if and only if
$|\alpha D\cap D|=\ell$ for all $\alpha\in \gi$.
Since any $\alpha\in G$ induces a permutation on~$X$, we have
\begin{equation}\label{alpha^-1}
|\alpha C\cap D|=|\alpha^{-1}(\alpha C\cap D)|=|C\cap\alpha^{-1}D|.
\end{equation}
In particular, $\abs{\alpha D \cap D} = 
\abs{\alpha^{-1} D \cap D}$, for any $\al \in G$. Let $D':=X\setminus D$.
Then $(\alpha D\cap D) \cup (\alpha D\cap D')=\alpha D \cap (D\cup D')
=\alpha D\cap X =\alpha D$.
So $D$ and $D'$ disjoint implies that
\begin{equation}
\label{D}
|\alpha D\cap D|+|\alpha D\cap D'|=|D|.
\end{equation}
Now it follows from Eqn \eqref{D} and Eqn \eqref{alpha^-1} that
$$
|\alpha D\cap D'| = |D|-|\alpha D\cap D|
 = |D|-|\alpha^{-1}D\cap D|=|\alpha^{-1} D\cap D'|=|D\cap\alpha D'|.
$$
That is,
\begin{equation}\label{DD'}
|\alpha D\cap D'|=|D\cap\alpha D'|,\qquad\forall~\alpha\in G,
\ \hbox{where } D' = X \setminus D.
\end{equation}

\begin{lemma}\label{ds-DD'}
Let $D$ be a subset of cardinality $k$ of the $G$-set $X$, 
and $D':=X\setminus D$ with cardinality $k' := v - k$. Then
the following are equivalent.

\begin{enumerate}
\item[$($i$)$] 
 There is a nonnegative integer $w$ such that 
$|\alpha D\cap D'|=w$ $($or equivalently, $|D\cap\alpha D'|=w)$
 for all $\alpha\in G\setminus\{1_G\}$.

\item[$($ii$)$] $D$ is a $G$--$(v,k,k-w)$ difference set.

\item[$($iii$)$] $D'$ is a $G$--$(v,k',k'-w)$ difference set.
\end{enumerate}
\end{lemma}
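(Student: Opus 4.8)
The plan is to establish the equivalences by exploiting the symmetry relation \eqref{DD'} together with the counting identities \eqref{alpha x=y}, \eqref{alpha^-1}, and \eqref{D} already derived in the excerpt. Note first that the parenthetical ``or equivalently'' in part (i) is immediate from \eqref{DD'}, so there is nothing to prove there. The real content is the cycle of implications among (i), (ii), (iii).

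First I would prove (i) $\Leftrightarrow$ (ii). By \eqref{alpha x=y}, $D$ is a $G$--$(v,k,\ell)$ difference set precisely when $|\alpha D \cap D| = \ell$ for all $\alpha \in \gi$ (this is exactly the reformulation noted just before the lemma). By \eqref{D}, for every $\alpha \in G$ we have $|\alpha D \cap D| + |\alpha D \cap D'| = |D| = k$. Hence the condition $|\alpha D \cap D'| = w$ for all $\alpha \in \gi$ is equivalent to $|\alpha D \cap D| = k - w$ for all $\alpha \in \gi$, which is precisely the statement that $D$ is a $G$--$(v,k,k-w)$ difference set. This gives (i) $\Leftrightarrow$ (ii) with the same value of $w$.

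Next I would prove (i) $\Leftrightarrow$ (iii) by the symmetric argument applied to $D'$. Apply \eqref{D} with $D$ replaced by $D'$ (noting $(D')' = D$): for every $\alpha \in G$, $|\alpha D' \cap D'| + |\alpha D' \cap D| = |D'| = k'$. By \eqref{DD'} we have $|\alpha D' \cap D| = |\alpha D \cap D'|$ for all $\alpha$, so the condition $|\alpha D \cap D'| = w$ for all $\alpha \in \gi$ is equivalent to $|\alpha D' \cap D'| = k' - w$ for all $\alpha \in \gi$, i.e.\ to $D'$ being a $G$--$(v,k',k'-w)$ difference set. This yields (i) $\Leftrightarrow$ (iii), completing the cycle.

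There is no serious obstacle here; the lemma is essentially a bookkeeping consequence of the identities already set up. The one point that requires a little care is making sure the quantifier in (i) ranges over $\gi$ and not all of $G$ when invoking \eqref{D} (which holds for all $\alpha \in G$), and keeping straight that the same nonnegative integer $w$ carries through all three conditions, so that the cardinality parameters $k - w$ and $k' - w$ come out correctly. I would state the proof in three short displayed chains of equalities, one for each implication, and remark at the outset that the equivalence of the two formulations in (i) is just \eqref{DD'}.
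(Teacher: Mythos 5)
Your proof is correct and follows essentially the same route as the paper: both derive (i) $\Leftrightarrow$ (ii) from the identity \eqref{D} and obtain (i) $\Leftrightarrow$ (iii) by the symmetric argument with $D$ and $D'$ interchanged (the paper compresses this into a single ``Similarly''). Your version merely spells out the $D'$ case and the role of \eqref{DD'} more explicitly.
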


\begin{proof}
By Eqn \eqref{D}, for all $\alpha\in G\setminus\{1_G\}$,
$|\alpha D\cap D'|=w$  if and only if $|\alpha D\cap D|=k-w$. So 
(i) and (ii) are equivalent. Similarly, (i) and (iii) are also equivalent.
\end{proof}

Xu \cite{xu1} introduced the concept of a 
 $(G,H)$-related difference family of a group $G$ 
(which is different from the difference families defined in \cite{bjl})
and used it to characterize 
$G$-perfect nonlinear functions on groups.
We generalize this concept to $G$-sets as follows.

\begin{defn}\label{rdf}\rm
Let $S_h\subseteq X$ for $h\in H$ be disjoint subsets
indexed by $H$ such that $\bigcup_{h\in H}S_h=X$. 
We say that $\{S_h \nid h\in H\}$ is a {\em $(G,H)$-related difference family} of $X$ if 
for any $\alpha\in G\setminus\{1_G\}$ and
$\sigma\in H\setminus\{1_H\}$ there are exactly $\frac{|X|}{|H|}$ elements 
$(x,y)\in \bigcup_{h\in H}(S_h\times S_{\sigma h})$
such that $\alpha x=y$.
\end{defn}

Note that if there exits a $(G,H)$-related difference family of $X$, then $\abs{H}$ divides
$\abs{X}$. Also in the above definition, we do not assume that $S_h$ is nonempty for 
every $h \in H$.

\begin{lemma}\label{rdf'}
Let $S_h\subseteq X$ for $h\in H$ be disjoint subsets
such that $\bigcup_{h\in H}S_h=X$.
Then the following are equivalent.
\begin{enumerate}
\item[$($i$)$] 
 $\{S_h \nid h\in H\}$ is a $(G,H)$-related difference family.

\item[$($ii$)$] 
$\sum_{h\in H}\big|\alpha S_h\cap S_{\sigma h}\big|=\frac{|X|}{|H|}$~
for all $\alpha\in G\setminus\{1_G\}$ and $\sigma\in H\setminus\{1_H\}$. 

\item[$($iii$)$]  
$\sum_{h\in H}\big|\alpha S_h\cap S_{\sigma h}\big|=\frac{|X|}{|H|}$~
for all $\alpha\in G\setminus\{1_G\}$ and all $\sigma\in H$. 
\end{enumerate}
\end{lemma}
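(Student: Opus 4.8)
The plan is to read conditions (i) and (ii) off each other directly from Definition \ref{rdf}, to note that (iii) $\Rightarrow$ (ii) is trivial since (iii) merely adjoins the case $\sigma=1_H$, and then to prove (ii) $\Rightarrow$ (iii) by a counting argument that sums over all of $H$.

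For (i) $\Leftrightarrow$ (ii): fix $\alpha\in\gi$ and $\sigma\in H$. Since the subsets $S_h$ ($h\in H$) are pairwise disjoint, so are the product sets $S_h\times S_{\sigma h}$; hence the number of pairs $(x,y)\in\bigcup_{h\in H}(S_h\times S_{\sigma h})$ with $\alpha x=y$ equals $\sum_{h\in H}\abs{\{(x,y)\in S_h\times S_{\sigma h}\nid \alpha x=y\}}$, which by Eqn \eqref{alpha x=y} is $\sum_{h\in H}\abs{\alpha S_h\cap S_{\sigma h}}$. Comparing with Definition \ref{rdf}, condition (i) asserts exactly that this common value is $|X|/|H|$ for all $\alpha\in\gi$ and all $\sigma\in\hi$, i.e.\ condition (ii). Condition (iii) is (ii) together with the extra assertion that $\sum_{h\in H}\abs{\alpha S_h\cap S_h}=|X|/|H|$ for all $\alpha\in\gi$, so (iii) $\Rightarrow$ (ii) is immediate.

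It remains to prove (ii) $\Rightarrow$ (iii), i.e.\ that the case $\sigma=1_H$ is automatic. Fix $\alpha\in\gi$. Every $x\in X$ lies in a unique $S_h$, and $\alpha x$ lies in a unique $S_{h'}$; putting $\sigma=h'h^{-1}$, the pair $(x,\alpha x)$ lies in $S_h\times S_{\sigma h}$ for exactly one $\sigma\in H$. Hence summing $\sum_{h\in H}\abs{\alpha S_h\cap S_{\sigma h}}$ over all $\sigma\in H$ counts each $x\in X$ exactly once, so
\[
\sum_{\sigma\in H}\ \sum_{h\in H}\abs{\alpha S_h\cap S_{\sigma h}}=|X|.
\]
By (ii) the inner sum equals $|X|/|H|$ for each of the $|H|-1$ values $\sigma\ne 1_H$, and therefore
\[
\sum_{h\in H}\abs{\alpha S_h\cap S_h}=|X|-(|H|-1)\,\frac{|X|}{|H|}=\frac{|X|}{|H|},
\]
which is (iii). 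There is no genuine obstacle here; the one point needing a little care is the first display, namely that as $\sigma$ runs over $H$ the sets $\bigcup_{h\in H}(S_h\times S_{\sigma h})$ partition $\{(x,\alpha x)\nid x\in X\}$, which forces the total to be $|X|$.
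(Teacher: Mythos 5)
Your proof is correct and follows essentially the same route as the paper: the equivalence of (i) and (ii) via Eqn \eqref{alpha x=y} and disjointness of the $S_h\times S_{\sigma h}$, and then the key identity $\sum_{\sigma\in H}\sum_{h\in H}\abs{\alpha S_h\cap S_{\sigma h}}=\abs{X}$ to recover the $\sigma=1_H$ case. The only cosmetic difference is that the paper derives this identity by noting $\bigcup_{\sigma\in H}(\alpha S_h\cap S_{\sigma h})=\alpha S_h$ and summing over $h$, whereas you count the pairs $(x,\alpha x)$ directly; these are the same computation.
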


\begin{proof} 
It follows from $S_h\subseteq X$ disjoint for all $h\in H$ that 
for any $\al \in G$, 
$\alpha S_h \cap S_{\sigma h}$ are also disjoint for all $h, \si \in H$,
and $S_h\times S_{\sigma h}$ are disjoint for all $h, \si \in H$, too.
So by Eqn \eqref{alpha x=y}, the number of elements 
$(x,y)\in \bigcup_{h\in H}(S_h\times S_{\sigma h})$
such that $\alpha x=y$ is equal to 
$\sum_{h\in H}\big|\alpha S_h\cap S_{\sigma h}\big|$.
Thus, (i) and (ii) are equivalent. Furthermore,  since $\bigcup_{h\in H}S_h=X$,
$$
\bigcup_{\sigma\in H}(\alpha S_h\cap S_{\sigma h})=
\alpha S_h \bigcap
\Big(\bigcup_{\sigma\in H} S_{\sigma h}\Big)= \al S_h \cap X= \al S_h.
$$
Hence, $S_{\si h}$ disjoint for all $\si \in H$ imply that 
$\sum_{\sigma\in H} \big|\alpha S_h\cap S_{\sigma h}\big| = \abs{S_h}$.
Therefore,
$$
\sum_{\sigma\in H}\sum_{h\in H}\big|\alpha S_h\cap S_{\sigma h}\big|=|X|.
$$
So (ii) and (iii) are equivalent.
\end{proof}

\begin{thm}\label{pnf-rdf}
Let $f: X \to H$ be a function, and $S_h := f^{-1}(h)$ 
for all $h \in H$. Then $f$ is a $G$-perfect nonlinear function
if and only if
$\{ S_h \nid h \in H \}$ is a $(G, H)$-related difference family.
\end{thm}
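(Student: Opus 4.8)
The plan is to unwind both conditions into the common language of the quantities $\bigl|\alpha S_h \cap S_{\sigma h}\bigr|$ and observe that they coincide. Recall that $S_h = f^{-1}(h)$ are disjoint subsets of $X$ with $\bigcup_{h\in H} S_h = X$, so the family $\{S_h \nid h\in H\}$ is eligible to be a $(G,H)$-related difference family in the sense of Definition \ref{rdf}, and by Lemma \ref{rdf'} being such a family is equivalent to $\sum_{h\in H}\bigl|\alpha S_h\cap S_{\sigma h}\bigr| = |X|/|H|$ for all $\alpha\in\gi$ and all $\sigma\in\hi$. So it suffices to show that for a fixed $\alpha\in\gi$ and $\sigma\in H$, the cardinality $\bigl|{f'_\alpha}^{-1}(\sigma)\bigr|$ equals $\sum_{h\in H}\bigl|\alpha S_h\cap S_{\sigma h}\bigr|$; then ($G$-perfect nonlinearity is exactly the statement that the left-hand side equals $|X|/|H|$ for all $\alpha\in\gi$, $\sigma\in H$, and Lemma \ref{rdf'}(iii) says the family is an RDF iff the right-hand side does).

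First I would establish the pointwise translation. For $x\in X$, write $h := f(x)$, so $x\in S_h$; then $f'_\alpha(x) = f(\alpha x) f(x)^{-1} = \sigma$ iff $f(\alpha x) = \sigma h$, i.e. iff $\alpha x \in S_{\sigma h}$. Equivalently, $x\in S_h$ and $\alpha x \in S_{\sigma h}$, which by Eqn \eqref{alpha x=y} (with $C = S_h$, $D = S_{\sigma h}$, read as counting $x$ rather than pairs — or more directly, $\{x\in S_h \nid \alpha x\in S_{\sigma h}\}$ is in bijection with $\{(x,y)\in S_h\times S_{\sigma h}\nid \alpha x = y\}$, which has size $|\alpha S_h \cap S_{\sigma h}|$) partitions ${f'_\alpha}^{-1}(\sigma)$ according to the value $h = f(x)$. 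Since the $S_h$ are disjoint and cover $X$, summing over $h\in H$ gives
$$
\bigl|{f'_\alpha}^{-1}(\sigma)\bigr| = \sum_{h\in H}\bigl|\{x\in S_h \nid \alpha x\in S_{\sigma h}\}\bigr| = \sum_{h\in H}\bigl|\alpha S_h\cap S_{\sigma h}\bigr|.
$$

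With this identity in hand the theorem is essentially immediate. If $f$ is $G$-perfect nonlinear, then for every $\alpha\in\gi$ and every $\sigma\in H$ (in particular every $\sigma\in\hi$) we get $\sum_{h\in H}\bigl|\alpha S_h\cap S_{\sigma h}\bigr| = |X|/|H|$, so condition (ii) of Lemma \ref{rdf'} holds and $\{S_h\nid h\in H\}$ is a $(G,H)$-related difference family; note $|H|$ divides $|X|$ automatically from the definition of $G$-perfect nonlinear. Conversely, if $\{S_h\nid h\in H\}$ is a $(G,H)$-related difference family, then $|H|\mid|X|$ and, by Lemma \ref{rdf'}(iii), $\sum_{h\in H}\bigl|\alpha S_h\cap S_{\sigma h}\bigr| = |X|/|H|$ for all $\alpha\in\gi$ and \emph{all} $\sigma\in H$; combined with the displayed identity this is exactly Eqn \eqref{pnf}, so $f$ is $G$-perfect nonlinear.

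The only subtlety — and the step I would present most carefully — is the passage between the two index sets over which $\sigma$ ranges: $G$-perfect nonlinearity quantifies over all $\sigma\in H$, whereas Definition \ref{rdf} quantifies only over $\sigma\in\hi$. This is precisely why Lemma \ref{rdf'} is invoked: part (iii) of that lemma shows the $\sigma = 1_H$ case is automatic (it records the identity $\sum_{h}|\alpha S_h\cap S_{\sigma h}| = |X|$ summed over $\sigma$, forcing each summand to be $|X|/|H|$ once the others are), so there is no genuine gap. Everything else is a routine bookkeeping of disjoint unions and the standard identity \eqref{alpha x=y}; no real obstacle arises.
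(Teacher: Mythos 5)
Your proposal is correct and follows essentially the same route as the paper: both establish the identity $\bigl|{f'_\alpha}^{-1}(\sigma)\bigr| = \sum_{h\in H}\bigl|\alpha S_h\cap S_{\sigma h}\bigr|$ via the pointwise equivalence $x\in S_h$, $f'_\alpha(x)=\sigma \iff \alpha x\in \alpha S_h\cap S_{\sigma h}$, and then invoke Lemma \ref{rdf'} to reconcile the quantification over all $\sigma\in H$ with that over $\sigma\in H\setminus\{1_H\}$. Your explicit attention to that quantifier gap is exactly the role Lemma \ref{rdf'}(iii) plays in the paper's argument.
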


\begin{proof} Clearly the subsets $S_h$ of $X$  
are disjoint for all $h\in H$, and  $\bigcup_{h\in H}S_h=X$.  
Let $\alpha\in G\setminus\{1_G\}$ and $\sigma, h \in H$. Then for any
 $x \in X$,  
\begin{eqnarray*}
x \in {f'_\alpha}^{-1}(\sigma) \cap S_h
& \iff & f(\alpha x)f(x)^{-1}=\sigma \hbox{ and } f(x) = h \\
&\iff& f(\alpha x)=\sigma f(x) = \sigma h 
\ \iff \ \alpha x\in\alpha S_h\cap S_{\sigma h}.
\end{eqnarray*}
Thus, $\bigabs{{f'_\alpha}^{-1}(\sigma) \cap S_h} = 
\bigabs{ \alpha S_h\cap S_{\sigma h}}$, for any
$\alpha\in G\setminus\{1_G\}$ and $\sigma, h \in H$. Hence,
$$
\big|{f'_\alpha}^{-1}(\sigma)\big|=
\sum_{h\in H} \bigabs{{f'_\alpha}^{-1}(\sigma) \cap S_h}
= \sum_{h\in H}\big|\alpha S_h\cap S_{\sigma h}\big|,
\qquad\forall~ \al \in \gi, \ \ \forall~  \sigma\in H.
$$
By the definition of a \gpnf\ (cf. Eqn \eqref{pnf}) and Lemma \ref{rdf'}, the theorem holds. 
\end{proof}

As direct consequences, we have the following corollaries.

\begin{cor}
[Cf. \cite{xu1}, Theorem 1.3]
Let $f: G \to H$ be a function, and $S_h := f^{-1}(h)$, for any $h \in H$. Then the
following are equivalent.
\begin{enumerate}
\item[$($i$)$] $f$ is \pn.
\item[$($ii$)$] $\{ S_h \nid h \in H \}$ is a \ghrdf.
\end{enumerate}
\end{cor}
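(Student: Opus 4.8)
The plan is to deduce this corollary immediately from Theorem~\ref{pnf-rdf} by specializing the $G$-set $X$ to be $G$ itself, acting on itself by left translation. First I would observe that $G$ is naturally a $G$-set via the action $(\alpha, x) \mapsto \alpha x$ given by the group multiplication, and that with this action the orbit is transitive and $|X| = |G| = m$. Under this identification, for a function $f : G \to H$ the derivative $f'_\alpha$ in the sense of the present paper, namely $x \mapsto f(\alpha x)f(x)^{-1}$, coincides with the usual derivative used in the classical definition of a perfect nonlinear function on a group; hence the condition $\bigabs{{f'_\alpha}^{-1}(\sigma)} = |G|/|H|$ for all $\alpha \in \gi$ and all $\sigma \in H$ is exactly the statement that $f$ is \pn\ in the classical sense. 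Likewise, Definition~\ref{rdf} applied to $X = G$ recovers precisely the notion of a \ghrdf\ of the group $G$ as introduced in \cite{xu1}.

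Once these identifications are in place, the equivalence of (i) and (ii) is nothing more than Theorem~\ref{pnf-rdf} read off for $X = G$: with $S_h := f^{-1}(h)$, the theorem says $f$ is $G$-perfect nonlinear (i.e.\ \pn) if and only if $\{S_h \nid h \in H\}$ is a $(G,H)$-related difference family (i.e.\ a \ghrdf). So the proof is essentially one line invoking the theorem, preceded by the remark that the hypothesis ``$|H|$ divides $|G|$'' is automatically part of the classical definition and is encoded in both sides.

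The only point requiring care — and the step I would flag as the ``main obstacle,'' though it is really just a matter of bookkeeping — is checking that the two sources use compatible conventions: that the group $G$ acting on itself by \emph{left} multiplication produces the derivative $f'_\alpha(x) = f(\alpha x)f(x)^{-1}$ that matches the one in \cite{xu1}, and that ``related difference family'' there is defined with the same indexing of the blocks $S_h$ by $H$ and the same requirement on the count $|G|/|H|$ of pairs $(x,y) \in \bigcup_h (S_h \times S_{\sigma h})$ with $\alpha x = y$. Since the present Definition~\ref{rdf} was explicitly introduced as the generalization to $G$-sets of Xu's notion, this compatibility holds by design, so no genuine argument is needed beyond stating it.

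\begin{proof}
Regard $G$ as a $G$-set via left multiplication, $(\alpha, x) \mapsto \alpha x$; then
$|X| = |G|$, and for any $f : G \to H$ the derivative $f'_\alpha(x) = f(\alpha x) f(x)^{-1}$
agrees with the derivative used in the classical definition of a perfect nonlinear function
on $G$. Hence $f$ is \pn\ (in particular $|H|$ divides $|G|$) if and only if $f$ is a
$G$-perfect nonlinear function from the $G$-set $G$ to $H$ in the sense of Eqn~\eqref{pnf}.
Moreover, Definition~\ref{rdf} applied to $X = G$ is exactly the definition of a \ghrdf\
of the group $G$. The result now follows from Theorem~\ref{pnf-rdf}.
\end{proof}
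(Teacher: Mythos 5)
Your proposal is correct and matches the paper's intent exactly: the paper states this corollary with no proof, presenting it as a direct consequence of Theorem~\ref{pnf-rdf}, and the specialization $X = G$ acting on itself by left multiplication is precisely the intended reading. Your added care about matching the derivative and the definition of a \ghrdf\ from \cite{xu1} is reasonable bookkeeping but adds nothing beyond what the paper implicitly assumes.
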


\begin{cor}[{Cf. \cite[Theorem 14]{ad}, \cite[Theorem 3.4]{dp}}]
\label{c-dp4}
Let $f: X \to H$ be a function, and let 
$R_f=\big\{\big(x,f(x)\big)\,\big|\, x\in X\big\}\subseteq X\times H$.
Then $f$ is $G$-perfect nonlinear if and only if $R_f$ is a 
$G\times H$--$(|X|,|H|,|X|,\frac{|X|}{|H|})$ semiregular relative difference set of 
$X\times H$ relative to $\{1_G\}\times H$.
\end{cor}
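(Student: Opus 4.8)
The plan is to reduce the relative-difference-set condition on $R_f\subseteq X\times H$ to the $(G,H)$-related difference family condition on $\{S_h\nid h\in H\}$, where $S_h=f^{-1}(h)$, and then invoke Theorem \ref{pnf-rdf}. First I would observe that $R_f$ has cardinality $|X|$ (since $f$ is a function, $x\mapsto(x,f(x))$ is injective), that $v=|X|=k$ in the notation of the definition, so the relative difference set in question is automatically semiregular, and that condition (ii) in part (ii) of the definition of a relative difference set --- if $(x,h),(x,h')\in R_f$ then $h=h'$ --- holds trivially because $f$ is single-valued. So the only substantive requirement is condition (i): for every $(\alpha,\sigma)\in(G\times H)\setminus(\{1_G\}\times H)$ there are exactly $\frac{|X|}{|H|}$ pairs $\big((x_1,h_1),(x_2,h_2)\big)\in R_f\times R_f$ with $(\alpha,\sigma)(x_1,h_1)=(x_2,h_2)$.

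Next I would rewrite this counting condition. A pair in $R_f\times R_f$ is determined by $x_1,x_2\in X$ via $h_i=f(x_i)$, and $(\alpha,\sigma)(x_1,f(x_1))=(x_2,f(x_2))$ means exactly $\alpha x_1=x_2$ and $\sigma f(x_1)=f(x_2)$. Writing $h=f(x_1)$, the second equation says $f(x_1)=h$ and $f(x_2)=\sigma h$, i.e. $x_1\in S_h$ and $x_2\in S_{\sigma h}$; combined with $\alpha x_1=x_2$ and Eqn \eqref{alpha x=y}, the number of such pairs is $\sum_{h\in H}|\alpha S_h\cap S_{\sigma h}|$. So condition (i) is precisely: $\sum_{h\in H}|\alpha S_h\cap S_{\sigma h}|=\frac{|X|}{|H|}$ for all $\alpha\in\gi$ and all $\sigma\in H$ --- which is exactly statement (iii) of Lemma \ref{rdf'}.

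Then I would assemble the pieces: by Lemma \ref{rdf'}, statement (iii) is equivalent to $\{S_h\nid h\in H\}$ being a $(G,H)$-related difference family, and by Theorem \ref{pnf-rdf} this is equivalent to $f$ being $G$-perfect nonlinear. It also follows (or should be noted) that the existence of such a configuration forces $|H|\mid|X|$, matching the parameters $(|X|,|H|,|X|,\frac{|X|}{|H|})$ in the statement. I do not anticipate a genuine obstacle here; the one point requiring mild care is bookkeeping the parameter names --- checking that "$v=k$" in the definition of semiregular corresponds to our $|X|=|R_f|$, and that the index set for $\sigma$ in the relative-difference-set condition is all of $H$ (not $H\setminus\{1_H\}$), so that it is Lemma \ref{rdf'}(iii), rather than (ii), that is invoked. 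Once the dictionary between the two sets of parameters is fixed, the proof is a direct translation.
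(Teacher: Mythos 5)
Your proposal is correct and follows essentially the same route as the paper: both identify the count of pairs $\big((x_1,h_1),(x_2,h_2)\big)\in R_f\times R_f$ moved by $(\alpha,\sigma)$ with $\sum_{h\in H}\abs{\alpha S_h\cap S_{\sigma h}}$ and then invoke Lemma \ref{rdf'}(iii) together with Theorem \ref{pnf-rdf}. Your explicit remarks that semiregularity and condition (ii) are automatic, and that $\sigma$ ranges over all of $H$ so that part (iii) rather than (ii) of Lemma \ref{rdf'} is needed, are the same points the paper relies on implicitly.
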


\begin{proof} Let $S_h := f^{-1}(h)\subseteq X$ for $h\in H$, and 
let $\al \in \gi$ and $\si \in H$. Then for any $(y, f(y)) \in R_f$, 
\begin{eqnarray*}
(y, f(y)) \in (\al, \si)  R_f \cap R_f & \iff &
   (y, f(y)) = (\al x, \si f(x)) \ \hbox{ for some (unique) } x \in X \\
& \iff & y \in \al S_h \cap S_{\si h} \ \hbox{ for some (unique) } h \in H. 
\end{eqnarray*}
Thus, 
$$
\big|(\alpha, \sigma) R_f \cap R_f \big|=
\sum_{h\in H}\big|\alpha S_h\cap S_{\sigma h}\big|,
\qquad\forall \alpha\in G\setminus\{1_G\}~~\forall~\sigma\in H.
$$
So the corollary holds by Eqn \eqref{alpha x=y}, Lemma \ref{rdf'}, and
Theorem \ref{pnf-rdf}.
\end{proof}

The difference families of groups in \cite{bjl} can be generalized to
$G$-difference families of $G$-sets as follows. 
Let $\mf := \{ S_1, \dotsc, S_p \}$ be
a family of nonempty subsets of $X$, and $K := \{ \abs{S_i} \nid 1 \le i \le p \}$.
If for any $\al \in \gi$, there are exactly $\ell$ elements $(x, y)$ in
$
\bigcup_{i=1}^p (S_i \times S_i)
$
such that $\al x=y$, then $\mf$ is called a {\em $G$--$(v, K, \ell)$ difference family}
of $X$. Furthermore, if $\mf$ forms a partition of $X$, then $\mf$ is called a
\it{partitioned $G$--$(v, K, \ell)$ difference family} of $X$. 
As an immediate consequence of Theorem \ref{pnf-rdf}, we have the following

\begin{cor}
Let $f: X \to H$ be a \gpnf. Let $S_h := f^{-1}(h)$ for any $h \in H$, 
and $K := \{ \abs{S_h} \nid h \in f(X) \}$, where $f(X)$ denotes the image of $f$. 
Then $\{ S_h \nid h \in f(X) \}$
is a partitioned $G$--$(\abs{X}, K, \abs{X}/\abs{H})$ difference family in $X$.
\end{cor}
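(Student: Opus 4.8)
The plan is to read everything off Theorem \ref{pnf-rdf} and Lemma \ref{rdf'}, with essentially no new work. Since $f$ is a \gpnf, Theorem \ref{pnf-rdf} says that $\{S_h \nid h\in H\}$ is a \ghrdf; in particular the sets $S_h=f^{-1}(h)$, $h\in H$, are pairwise disjoint with $\bigcup_{h\in H}S_h=X$. Discarding those $h\notin f(X)$, for which $S_h=\emptyset$, leaves $\{S_h\nid h\in f(X)\}$, a family of nonempty pairwise disjoint subsets whose union is still $X$ — that is, a partition of $X$. Since $v=\abs{X}$ and $K=\{\abs{S_h}\nid h\in f(X)\}$ by definition, the only thing left to check is the difference-family count, with parameter $\ell=\abs{X}/\abs{H}$.

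For that count, fix $\alpha\in\gi$. The key observation is that the difference-family condition involves the \emph{diagonal} products $S_h\times S_h$, which is the $\sigma=1_H$ instance — and this instance is not part of the definition of a \ghrdf, but is exactly what the equivalence (i)$\iff$(iii) of Lemma \ref{rdf'} supplies. Indeed, condition (i) of Lemma \ref{rdf'} holds, so condition (iii) holds, giving $\sum_{h\in H}\abs{\alpha S_h\cap S_{\sigma h}}=\abs{X}/\abs{H}$ for all $\sigma\in H$; taking $\sigma=1_H$ yields $\sum_{h\in H}\abs{\alpha S_h\cap S_h}=\abs{X}/\abs{H}$. Since the empty blocks contribute $0$, this reads $\sum_{h\in f(X)}\abs{\alpha S_h\cap S_h}=\abs{X}/\abs{H}$.

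Finally I would translate intersection sizes back into pair counts: by Eqn \eqref{alpha x=y}, $\abs{\alpha S_h\cap S_h}$ is the number of $(x,y)\in S_h\times S_h$ with $\alpha x=y$, and since the $S_h$ are disjoint so are the products $S_h\times S_h$; hence the number of $(x,y)\in\bigcup_{h\in f(X)}(S_h\times S_h)$ with $\alpha x=y$ equals $\sum_{h\in f(X)}\abs{\alpha S_h\cap S_h}=\abs{X}/\abs{H}$. As $\alpha\in\gi$ was arbitrary, this is precisely the definition of a partitioned $G$--$(\abs{X},K,\abs{X}/\abs{H})$ difference family of $X$. There is no real obstacle here; the only points requiring care are bookkeeping the empty versus nonempty blocks so that the partition statement and the count are phrased over $f(X)$ rather than $H$, and remembering that the $\sigma=1_H$ case has to be drawn from Lemma \ref{rdf'}(iii), not from the definition of a related difference family.
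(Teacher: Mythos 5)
Your argument is correct and is exactly the expansion of what the paper leaves implicit: the corollary is stated there without proof as an ``immediate consequence'' of Theorem \ref{pnf-rdf}, and your route through Lemma \ref{rdf'}(i)$\iff$(iii) with $\sigma=1_H$ is the right way to extract the diagonal count $\sum_{h}\abs{\alpha S_h\cap S_h}=\abs{X}/\abs{H}$. (One tiny shortcut: since Eqn \eqref{pnf} already requires $\abs{{f'_\alpha}^{-1}(\sigma)}=\abs{X}/\abs{H}$ for \emph{all} $\sigma\in H$ including $\sigma=1_H$, the identity $\abs{{f'_\alpha}^{-1}(1_H)}=\sum_h\abs{\alpha S_h\cap S_h}$ from the proof of Theorem \ref{pnf-rdf} gives the count directly, but this is the same computation.)
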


The relation between $G$-perfect nonlinear functions and $G$-difference sets 
established in \cite{dp}
can also be easily obtained as a direct consequence of Theorem \ref{pnf-rdf} 
and Lemma \ref{ds-DD'}.

\begin{cor}
[Cf. \cite{dp}, Theorem 3.3]
\label{cor-dp-2-pnf}
Let $f: X \to {\mathbb F}_2$ be a function, where ${\Bbb F}_2$ denotes the field
of $2$ elements (but here it is regarded as an additive group of order $2$).
Let $S_0= f^{-1}(0)$, $S_1= f^{-1}(1)$,
$k_0=|S_0|$, and $k_1=|S_1|$ (hence $k_0+k_1=v$). 
Then $f$ is a $G$-perfect nonlinear function
if and only if 
$4|v$ and $S_1$ is a $G$--$(v,k_1,k_1-\frac{v}{4})$ difference set
(or equivalently,  $S_0$ is a $G$--$(v,k_0,k_0-\frac{v}{4})$ difference~set).
\end{cor}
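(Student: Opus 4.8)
The plan is to deduce Corollary \ref{cor-dp-2-pnf} directly from Theorem \ref{pnf-rdf} together with Lemma \ref{rdf'} and Lemma \ref{ds-DD'}, with no fresh computation. First I would apply Theorem \ref{pnf-rdf} with $H=\mathbb F_2$: setting $S_0=f^{-1}(0)$ and $S_1=f^{-1}(1)$, the function $f$ is $G$-perfect nonlinear if and only if $\{S_0,S_1\}$ is a $(G,\mathbb F_2)$-related difference family of $X$. Since $\mathbb F_2$ has only one nonidentity element $\sigma=1$, Lemma \ref{rdf'}(ii) says this is equivalent to
\begin{equation*}
\bigabs{\alpha S_0\cap S_1}+\bigabs{\alpha S_1\cap S_0}=\frac{v}{2},\qquad\forall\,\alpha\in\gi .
\end{equation*}
Here I would note that the existence of such a family forces $\abs{H}=2$ to divide $v$, and I will show the condition above in fact forces $4\mid v$.

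Next I would use the symmetry relation \eqref{DD'}, namely $\abs{\alpha S_1\cap S_0}=\abs{S_1\cap\alpha S_0}=\abs{\alpha S_0\cap S_1}$ (the middle equality is \eqref{alpha^-1}, applied with $D=S_1$, $D'=S_0=X\setminus S_1$), so the displayed equation collapses to $2\bigabs{\alpha S_0\cap S_1}=v/2$, i.e. $\bigabs{\alpha S_0\cap S_1}=v/4$ for all $\alpha\in\gi$. In particular this quantity is an integer, so $4\mid v$, and conversely the condition $4\mid v$ is needed for the statement to make sense. Thus $f$ is $G$-perfect nonlinear if and only if $4\mid v$ and $\bigabs{\alpha S_1\cap S_0}=v/4$ for all $\alpha\in\gi$, where $S_0=X\setminus S_1$.

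Finally I would invoke Lemma \ref{ds-DD'} with $D:=S_1$, $D'=S_0$, $k:=k_1$, $k':=k_0=v-k_1$, and $w:=v/4$: its equivalence of (i) and (ii) says that $\bigabs{\alpha S_1\cap S_0}=v/4$ for all $\alpha\in\gi$ is the same as $S_1$ being a $G$--$(v,k_1,k_1-\tfrac{v}{4})$ difference set, and the equivalence of (i) and (iii) gives the parenthetical statement that this is also the same as $S_0$ being a $G$--$(v,k_0,k_0-\tfrac{v}{4})$ difference set. Combining the three steps yields exactly the claimed biconditional. I do not anticipate a genuine obstacle here; the only point requiring a word of care is the bookkeeping that reduces the two-term sum $\sum_{h\in\mathbb F_2}$ to a single term via the symmetry \eqref{alpha^-1}--\eqref{DD'}, and the observation that integrality of $v/4$ (hence $4\mid v$) is automatic from the related-difference-family condition rather than an extra hypothesis.
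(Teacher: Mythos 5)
Your proposal is correct and follows essentially the same route as the paper: Theorem \ref{pnf-rdf} (via Lemma \ref{rdf'}) reduces $G$-perfect nonlinearity to $\abs{\alpha S_0\cap S_1}+\abs{\alpha S_1\cap S_0}=v/2$, the symmetry \eqref{DD'} collapses this to $\abs{\alpha S_0\cap S_1}=v/4$ (whence $4\mid v$), and Lemma \ref{ds-DD'} converts that into the difference-set statement for $S_1$ or equivalently $S_0$. The only cosmetic difference is that you make the integrality argument for $4\mid v$ explicit, which the paper leaves implicit.
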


\begin{proof} By Theorem \ref{pnf-rdf}, 
$f$ is $G$-perfect nonlinear if and only if
$$
|\alpha S_0\cap S_1|+|\alpha S_1 \cap S_0|=\frac{|X|}{|{\Bbb F}_2|}=\frac{v}{2},
\qquad\forall~\alpha\in \gi.
$$
By Eqn \eqref{DD'}, the above equality is equivalent to that
$$
|\alpha S_0\cap S_1|=|S_0\cap\alpha S_1|=\frac{v}{4},
\qquad\forall~\alpha\in \gi.
$$
Thus the corollary follows from  Lemma \ref{ds-DD'} immediately.
\end{proof}

Moreover, by applying Theorem \ref{pnf-rdf} and Lemma \ref{ds-DD'}, 
in the next theorem 
we can determine all $G$-perfect nonlinear functions on  a $G$-set 
whose images contain exactly two elements.
Note that examples of \gpnf s that satisfy the properties described in the 
next theorem can be found in \cite{dp} and  \cite[Example 6.5]{fx}.

\begin{thm} 
\label{thm-2-3}
Assume that $f:X\to H$ is a function 
whose image $f(X)$ consists of exactly two elements $h_1,h_2\in H$.
Let $S_h := f^{-1}(h)$ for $h\in H$, and $k_i := |S_{h_i}|$ for $i=1,2$. 
Then $f$ is a $G$-perfect nonlinear function
if and only if one of the following holds.
\begin{enumerate}
\item[$($i$)$] 
 $|H|=2$, $4|v$ and $S_{h_1}$ is a $G$--$(v,k_1,k_1-v/4)$ difference set
(or equivalently,  $S_{h_2}$ is a $G$--$(v,k_2,k_2-v/4)$ difference set).

\item[$($ii$)$] $|H|=3$, $3|v$ and $S_{h_1}$ is a $G$--$(v,k_1,k_1-v/3)$ difference set
(or equivalently,  $S_{h_2}$ is a $G$--$(v,k_2,k_2-v/3)$ difference set).
\end{enumerate}
\end{thm}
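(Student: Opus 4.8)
The plan is to reduce everything to Theorem \ref{pnf-rdf} and Lemma \ref{ds-DD'}, exactly as in the proof of Corollary \ref{cor-dp-2-pnf}, the only extra work being the arithmetic that forces $\abs{H}\in\{2,3\}$. Write $h_1,h_2$ for the two elements in $f(X)$, $S_1:=S_{h_1}=f^{-1}(h_1)$, $S_2:=S_{h_2}=f^{-1}(h_2)$, and $S_h=\emptyset$ for $h\in H\setminus\{h_1,h_2\}$; then $S_1\cup S_2=X$, so $k_1+k_2=v$ and $S_2=X\setminus S_1$. By Theorem \ref{pnf-rdf}, $f$ is $G$-perfect nonlinear iff $\{S_h\nid h\in H\}$ is a $(G,H)$-related difference family, which by Lemma \ref{rdf'}(ii) means
$$
\sum_{h\in H}\bigabs{\alpha S_h\cap S_{\sigma h}}=\frac{v}{\abs H},
\qquad\forall\,\alpha\in\gi,\ \forall\,\sigma\in\hi .
$$
Since $S_h=\emptyset$ unless $h\in\{h_1,h_2\}$, the left-hand side is $\abs{\alpha S_1\cap S_{\sigma h_1}}+\abs{\alpha S_2\cap S_{\sigma h_2}}$, and this is nonzero only when $\sigma h_1,\sigma h_2\in\{h_1,h_2\}$, i.e. when $\sigma$ permutes the two-element set $\{h_1,h_2\}$.

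First I would split on whether $\sigma h_1=h_1$ or $\sigma h_1=h_2$. If $\sigma\neq 1_H$ but $\sigma h_1=h_1$, then necessarily $\sigma h_2=h_2$ as well (since $\sigma$ fixes $h_1$ and $f(X)=\{h_1,h_2\}$ has only two points, but more to the point $\sigma$ acts on $H$ by left translation, hence freely, so $\sigma h_1=h_1$ already forces $\sigma=1_H$); thus the only nontrivial $\sigma$ that can give a nonzero sum is the one with $\sigma h_1=h_2$ and $\sigma h_2=h_1$ — and such a $\sigma$ exists and is nontrivial iff $h_2h_1^{-1}=h_1h_2^{-1}$, i.e. $(h_1^{-1}h_2)^2=1_H$, equivalently $h_2=h_1\tau$ for an involution $\tau$, with $\sigma=h_2h_1^{-1}$. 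In that case the displayed equation at $\sigma$ reads
$$
\bigabs{\alpha S_1\cap S_2}+\bigabs{\alpha S_2\cap S_1}=\frac{v}{\abs H},
\qquad\forall\,\alpha\in\gi,
$$
while for every other $\sigma\in\hi$ the left side is $0$, so the equation forces $v/\abs H=0$ — impossible — unless there is no other nontrivial $\sigma$. Hence $H\setminus\{1_H\}$ has at most one element with $\sigma\{h_1,h_2\}=\{h_1,h_2\}$ and no other nontrivial element at all; that is, either $\abs H=2$ (with $h_2=h_1\tau$, $\tau$ the nonidentity element), or $\abs H=3$. When $\abs H\ge 4$ one can never satisfy the condition, since the right side $v/\abs H>0$ but the left side vanishes for the (existing) nontrivial $\sigma$ not swapping $h_1,h_2$; this disposes of all larger $H$ and is the one point that requires a little care.

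Now treat the two surviving cases. If $\abs H=2$: then $v/\abs H=v/2$ and the equation is $\abs{\alpha S_1\cap S_2}+\abs{\alpha S_2\cap S_1}=v/2$ for all $\alpha\in\gi$; by Eqn \eqref{DD'}, $\abs{\alpha S_1\cap S_2}=\abs{S_1\cap\alpha S_2}=\abs{\alpha S_2\cap S_1}$, so this says $\abs{\alpha S_1\cap S_2}=v/4$ for all $\alpha\in\gi$ (forcing $4\mid v$), and Lemma \ref{ds-DD'} (with $w=v/4$, $D=S_1$, $D'=S_2$) converts this into: $S_{h_1}$ is a $G$--$(v,k_1,k_1-v/4)$ difference set, equivalently $S_{h_2}$ is a $G$--$(v,k_2,k_2-v/4)$ difference set. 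This is exactly (i). If $\abs H=3$: write $H=\{1_H,\sigma,\sigma^2\}$ (cyclic of order $3$). The two elements $h_1,h_2$ lie in distinct cosets and one checks that exactly one nonidentity $\sigma_0\in H$ sends $h_1\mapsto h_2$ and, since $\abs H=3$, the other nonidentity element $\sigma_0^{-1}$ sends $h_1\mapsto\sigma_0^{-1}h_1$, which is neither $h_1$ nor $h_2$ only if $f(X)$ missed it — but $f(X)$ has only two points, so $\sigma_0^{-1}h_1\notin\{h_1,h_2\}$ and $S_{\sigma_0^{-1}h_1}=\emptyset$; hence the equation at $\sigma=\sigma_0^{-1}$ has left side $0$, contradicting $v/3>0$ — unless one re-examines: in fact for $\abs H=3$ one of $\sigma_0,\sigma_0^{-1}$ swaps nothing, so to have a $(G,H)$-related difference family at all we need the $\sigma=\sigma_0^{-1}$ equation to read $0=v/3$, impossible. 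So I will need to be slightly more careful here: the correct statement is that for $\abs H=3$ the two nonidentity elements $\sigma,\sigma^2$ satisfy $\{S_{\sigma h_1},S_{\sigma h_2}\}$ and $\{S_{\sigma^2 h_1},S_{\sigma^2 h_2}\}$ — and since among $h_1,\sigma h_1,\sigma^2 h_1$ at least one is not in $\{h_1,h_2\}$ yet the corresponding $S$ is empty, the only way the family condition holds is if $v/\abs H$ still comes out consistently; the honest route is to just recompute $\sum_{h\in H}\abs{\alpha S_h\cap S_{\sigma h}}$ directly for $\abs H=3$ using $S_{h_1}\cup S_{h_2}=X$ and see that it equals $\abs{\alpha S_{h_1}\cap S_{h_1}}$ or $\abs{\alpha S_{h_2}\cap S_{h_2}}$ type terms — and this forces $\abs{\alpha S_{h_1}\cap S_{h_2}}=v/3$, whence by Lemma \ref{ds-DD'} with $w=v/3$ we get $S_{h_1}$ is a $G$--$(v,k_1,k_1-v/3)$ difference set and equivalently so is $S_{h_2}$, giving (ii). The main obstacle, and the step I would treat most carefully in the write-up, is precisely this bookkeeping for $\abs H=3$: showing that the vanishing of the $S_h$ for $h\notin f(X)$ is consistent with the $(G,H)$-related difference family condition only when $\abs H\in\{2,3\}$, and extracting in the $\abs H=3$ case the single clean difference-set condition $\abs{\alpha S_{h_1}\cap S_{h_2}}=v/3$. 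Everything else is a direct appeal to Theorem \ref{pnf-rdf}, Lemma \ref{rdf'}, Eqn \eqref{DD'}, and Lemma \ref{ds-DD'}, as in Corollary \ref{cor-dp-2-pnf}.
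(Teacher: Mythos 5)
Your overall strategy -- reduce to Theorem \ref{pnf-rdf} and Lemma \ref{rdf'}, observe that $S_h=\emptyset$ off $\{h_1,h_2\}$ so the family condition collapses to $\abs{\alpha S_{h_1}\cap S_{\sigma h_1}}+\abs{\alpha S_{h_2}\cap S_{\sigma h_2}}=v/\abs{H}$, and finish with Eqn \eqref{DD'} and Lemma \ref{ds-DD'} -- is exactly the paper's. But the central combinatorial step is wrong. You claim the left-hand side is nonzero only when $\sigma$ maps the set $\{h_1,h_2\}$ to itself. That is false: the sum has two terms and only one of them needs to survive. The correct dichotomy (and the one the paper uses) is that the left-hand side vanishes iff \emph{both} $\sigma h_1\ne h_2$ \emph{and} $\sigma h_2\ne h_1$, so every $\sigma\in H\setminus\{1_H\}$ must lie in $\{h_2h_1^{-1},\,h_1h_2^{-1}\}$, whence $\abs{H}\le 3$. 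The case $\abs{H}=3$ is precisely the case where $\sigma_0=h_2h_1^{-1}$ sends $h_1$ to $h_2$ but does \emph{not} send $h_2$ to $h_1$; your premise excludes it from the start, which is why your argument then ties itself in knots.

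The error propagates concretely into your $\abs{H}=3$ analysis: evaluating the condition at $\sigma=\sigma_0^{-1}=h_1h_2^{-1}$, you assert the left side is $0$ because $S_{\sigma_0^{-1}h_1}=\emptyset$, but you have dropped the second term $\abs{\alpha S_{h_2}\cap S_{\sigma_0^{-1}h_2}}=\abs{\alpha S_{h_2}\cap S_{h_1}}$, which is generally nonzero since $\sigma_0^{-1}h_2=h_1$. This leads you to declare the $\abs{H}=3$ case impossible -- contradicting part (ii) of the theorem you are trying to prove -- and the subsequent ``the honest route is to just recompute'' passage asserts the conclusion $\abs{\alpha S_{h_1}\cap S_{h_2}}=v/3$ without ever carrying out that computation or retracting the contradiction you derived two sentences earlier. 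To repair the proof: at $\sigma=\sigma_0$ the condition reads $\abs{\alpha S_{h_1}\cap S_{h_2}}=v/3$ (the other term vanishes because $\sigma_0 h_2\notin\{h_1,h_2\}$), at $\sigma=\sigma_0^{-1}$ it reads $\abs{\alpha S_{h_2}\cap S_{h_1}}=v/3$, these are consistent by Eqn \eqref{DD'}, and Lemma \ref{ds-DD'} with $w=v/3$ then gives (ii). Your $\abs{H}=2$ case and the exclusion of $\abs{H}\ge 4$ reach the right conclusions, though the latter should be justified by exhibiting a nontrivial $\sigma$ outside $\{h_2h_1^{-1},h_1h_2^{-1}\}$ rather than one ``not swapping $h_1,h_2$.''
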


\begin{proof} Note that both $k_1$ and $k_2$ are positive, but
$|S_h|=0$ (i.e. $S_h=\emptyset$) for $h\ne h_1,h_2$.

Suppose that $f$ is a $G$-perfect nonlinear function. 
By Theorem \ref{pnf-rdf}, for any $\alpha\in G\setminus\{1_G\}$, we have
\begin{equation*}
\sum_{h\in H}\big|\alpha S_h\cap S_{\sigma h}\big|=\frac{|X|}{|H|},
\qquad\forall~\sigma\in H\setminus\{1_H\}.
\end{equation*}
If $h\ne h_1,h_2$, then $\alpha S_h=\emptyset$, and hence
$\big|\alpha S_h\cap S_{\sigma h}\big|=0$. So  
the above equality can be reduced to
\begin{equation}\label{two terms}
 \big|\alpha S_{h_1}\cap S_{\sigma h_1}\big|+
 \big|\alpha S_{h_2}\cap  S_{\sigma h_2}\big|=\frac{|X|}{|H|},
 \qquad\forall~\sigma\in H\setminus\{1_H\}.
\end{equation}
Since $\sigma\ne 1_H$, $\sigma h_i\ne h_i$ for $i=1,2$. 
If $\sigma h_1\ne h_2$ and $\sigma h_2\ne h_1$, then
$S_{\sigma h_1}=\emptyset=S_{\sigma h_2}$, and
Eqn~\eqref{two terms} cannot hold. 

Thus, for any $\sigma\in H\setminus\{1_H\}$, we have 
either $\sigma h_1=h_2$ (i.e. $\sigma=h_2h_1^{-1}$) 
or $\sigma h_2=h_1$ (i.e. $\sigma=h_1h_2^{-1}$).
Note that it is possible $h_2 h_1^{-1} = h_1h_2^{-1}$.

If  $h_2 h_1^{-1} = h_1h_2^{-1}$, then for any
$\sigma\in H\setminus\{1_H\}$, we must have 
$\si = h_2h_1^{-1}=h_1h_2^{-1}$.  So $H=\{1,\sigma\}$, and (i) holds
by Corollary \ref{cor-dp-2-pnf}.

Now assume that $h_2h_1^{-1}\ne h_1h_2^{-1}$. 
Since $h_1\ne h_2$, none of $h_2h_1^{-1}$ and  $h_1h_2^{-1}$ is the identity 
element. Because we have already shown that any non-identity element of $H$ must be either 
$h_2h_1^{-1}$ or $h_1h_2^{-1}$, we see that $|H|=3$. Thus, if $\si = 
h_2h_1^{-1}$, then $\si h_1 = h_2$, and $\si h_2 \notin \{h_1, h_2 \}$.
If $\si = h_1 h_2^{-1}$, then $\si h_2 = h_1$, and
$\si h_1 \notin \{h_1, h_2 \}$. Therefore,
 Eqn \eqref{two terms} becomes
\begin{equation}\label{order 3}
 \big|\alpha S_{h_1}\cap S_{h_2}\big|=\big|S_{h_1}\cap\alpha  S_{h_2}\big|
 =\frac{|X|}{|H|}=\frac{v}{3}, \quad \forall~\sigma\in H\setminus\{1_H\}.
\end{equation}
By Lemma \ref{ds-DD'}, 
$S_{h_1}$ is a $G$--$(v,k_1,k_1-v/3)$ difference set,
which is equivalent to that 
$S_{h_2}$ is a $G$--$(v,k_2,k_2-v/3)$ difference set.

\smallskip 
Conversely, if (i) holds, then  
$f$ is a $G$-perfect nonlinear function by Corollary \ref{cor-dp-2-pnf}.
If (ii) holds, then by Lemma \ref{ds-DD'}, 
Eqn \eqref{order 3} holds. Thus,   
$f$ is a $G$-perfect nonlinear function by Theorem \ref{pnf-rdf}.
\end{proof}

\section{\large Normalized $G$-dual sets 
\label{sect-pre} }

With Notation \ref{assume},  in the rest of the paper 
we always assume that $G$ and $H$ are abelian. Then we
study \gpnf s, $G$-bent functions, and $G$-difference sets, etc.
by using Fourier transforms on $G$-sets.  

In this section we first briefly state some known results that will be 
needed later. Then we introduce the concept of a normalized $G$-dual set 
and discuss its basic properties.
For bent and \pnf s on \fg s, the reader is referred to
\cite{cd, d, lsy, p06, pp, pott, s02, xu1, xu2, xu3}.

Let $\wh G$ be the dual group of $G$. That is, $\wh G$ is the group of \ic s
of $G$ over the complex field $\c$.
The \it{principal \ic} of $G$ is the character $\psi_0: G \to \c, \al
\mapsto 1$. The kernel of $\psi \in \wh G$ is 
$\ker \psi := \{\al\in G \nid \psi(\al) = 1 \}$. Note that $\ker \psi$ is a subgroup of $G$. 
For any $z \in \c$, let $\ol{z}$ be the complex conjugate of $z$, and $\abs{z}$
the absolute value (or complex modulus) of $z$. Then for any $\psi \in \wh G$ and 
$\al \in G$, $\abs{\psi(\al)} = 1$. Furthermore, for any $\psi, \varphi \in \wh G$,
$$
\sum_{\al \in G} \psi(\al) \ol{\varphi(\al)} =
\begin{cases}
\abs{G}, & \hbox{if } \psi = \varphi; \\
0, & \hbox{if } \psi \ne \varphi,
\end{cases}
$$
(the First Orthogonality Relation of \ic s).
For the reference of the character theory of \fg s, the reader is referred to \cite{h, s}.

Let $\c^X$ be the set of all functions from $X$ to $\c$. Then $\c^X$
is a $G$-space with the $G$-action defined by
$$
(\al f)(x) := f(\al^{-1} x), \quad \hbox{for any } f \in \c^X, \al \in G, x \in X. 
$$
For any $f \in \c^X$, if there is a $\psi \in \wh G$ such that
$$
f(\al^{-1} x) = \psi(\al) f(x), \quad \hbox{for any } \al \in G, x \in X,
$$ 
then $f$ is said to be \it{$\psi$-linear} or \it{$G$-linear} (cf. 
\cite[Definition 2.1]{fx}).  
 Also $\c^X$ is a unitary space with inner product
$$
\lrg{f, g} := \sum_{x \in X} f(x) \ol{g(x)}, \quad \hbox{for any } f, g \in \c^X. 
$$
For a positive real number $\mu$, 
a basis $\{u_1, \dotsc, u_v \}$ of $\c^X$ is called a \it{$\mu$-normal orthogonal 
basis} if $\lrg{u_i, u_j} = \delta_{ij} \mu$ for all $1 \le i, j \le v$, where 
$\delta_{ij}$ is the Kronecker delta.  The complex conjugate of  
$f \in \c^X$ is $\ol f \in \c^X$ defined by $\ol{f}(x) := \ol{f(x)}$, for any
$x \in X$.

\begin{defn} \rm
(Cf. \cite[Definition 2.2]{fx})
A \it{\gdual} $\wh X$ of a $G$-set $X$ is a
$\abs{X}$-normal orthogonal basis of $\c^X$ such that any $\l \in \wh X$
is $G$-linear and $\wh X$ is closed under complex conjugate (i.e. 
$\ol{\l} \in \wh X$ for any $\l \in \wh X$).
\end{defn}

For any $G$-set $X$, there exists a \gdual\ $\wh X$ by \cite[Theorem 2.3]{fx}.  
Let $\wh X$ be a \gdual\ of~$X$. For any $\psi \in \wh G$, let $\wh X_\psi := 
\{ \l \in \wh X \nid \l $ is $\psi$-linear\}. Then $\wh X_\psi$ are disjoint for
all $\psi \in \wh G$ and $\wh X = \bigcup_{\psi \in \wh G} \wh X_\psi$.

\begin{defn} \rm
(Cf. \cite[Definition 3.1]{fx})
For any $f \in \c^X$, the {\em Fourier transform} of $f$ on $\wh X$ is a function
 $\wh f \in \c^{\wh X}$ defined by
$$
\wh f(\l)=\sum_{x \in X} f(x) \l(x), \quad \hbox{for any }\l \in \wh X.
$$
\end{defn}

Let $T := \{ z \in \c \nid \abs{z} = 1 \}$ be the unit circle in $\c$.
A function $f:X\to T$ is called a {\em \gbf} (cf. \cite[Definition 4.1]{fx}) if
\begin{equation}
\label{def-bent}
\sum_{\l \in \wh X_{\psi}} \bigabs{ \wh f(\l) }^2
=\frac{|X|^2}{|G|}, \quad \hbox{for all } \psi \in \wh G.
\end{equation}
Since $\sum_{\psi\in\wh G}\sum_{\l\in\wh X_\psi}|\wh f(\l)|^2
 =|\wh f|^2=|X|^2$ (see \cite[Corollary 3.5]{fx}), 
it follows that 
\begin{equation}
\label{def-bent'}
f \hbox{ is $G$-bent } \quad \iff \quad
\sum_{\l \in \wh X_{\psi}} \bigabs{ \wh f(\l) }^2
=\frac{|X|^2}{|G|}, \quad \hbox{for all } \psi 
\in \wh G\setminus\{\psi_0\}.
\end{equation}

Let $f: X \to T$ be a function, and $\al \in G$. Then the \it{derivative of $f$ in direction}
$\al$ is the function $f_{\al}'$ defined by
$$
f_{\al}': \ X \to T, \quad x \mapsto f(\al x) f(x)^{-1}.
$$
A function $g: X \to T$ is said to be \it{balanced} if $\sum_{x \in X} g(x) = 0$.

\begin{thm}
\label{thm-bent}
{\rm (Cf. \cite[Theorem 4.6]{fx})}
A function $f: X \to T$ is \gbent\ if and only if for any $\al \in \gi$, $f_{\al}'$ is 
balanced.
\end{thm}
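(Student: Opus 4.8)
The plan is to reduce the $G$-bent condition \eqref{def-bent'} to a statement about each derivative $f_\alpha'$ by computing the left-hand side $\sum_{\l \in \wh X_\psi} \abs{\wh f(\l)}^2$ as a sum over $G$ of Fourier-type quantities. First I would expand $\abs{\wh f(\l)}^2 = \wh f(\l) \ol{\wh f(\l)} = \sum_{x, y \in X} f(x) \ol{f(y)} \l(x) \ol{\l(y)}$, and then sum over $\l \in \wh X_\psi$. The key is to understand $\sum_{\l \in \wh X_\psi} \l(x) \ol{\l(y)}$; since each $\l \in \wh X_\psi$ is $\psi$-linear, meaning $\l(\alpha^{-1} x) = \psi(\alpha) \l(x)$, the $G$-orbit structure on $X$ and the $\psi$-linearity should let me relate this to a delta-type expression. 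More efficiently, I would instead sum over \emph{all} of $\wh X$ after weighting by characters of $\wh G$: using the First Orthogonality Relation, $\mathbf{1}_{\l \in \wh X_\psi}$ can be written as $\frac{1}{\abs{G}} \sum_{\alpha \in G} \ol{\psi(\alpha)} \chi_\l(\alpha)$ where $\chi_\l$ records which $\wh X_{\psi'}$ contains $\l$; combined with $\l(\alpha x) = \ol{\psi(\alpha)}\,\l(x)$ this turns the orbit sum into something indexed by $\alpha \in G$.

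The cleanest route, which I expect to work, is the following. For $\alpha \in G$ define $c_\alpha := \sum_{x \in X} f(\alpha x) \ol{f(x)} = \sum_{x \in X} f_\alpha'(x)$ when $f$ maps into $T$ (so $\ol{f(x)} = f(x)^{-1}$); note $c_{1_G} = \abs{X}$. I would show that
$$
\sum_{\l \in \wh X_\psi} \bigabs{\wh f(\l)}^2 = \frac{\abs{X}}{\abs{G}} \sum_{\alpha \in G} \ol{\psi(\alpha)}\, c_\alpha,
$$
by substituting the Fourier expansion, interchanging sums, and using $G$-linearity of each $\l$ together with the fact that $\wh X$ is an $\abs{X}$-normal orthogonal basis (so $\sum_{\l \in \wh X} \l(x) \ol{\l(y)} = \abs{X}\,\delta_{xy}$, a Plancherel-type identity), after splitting $\wh X = \bigcup_{\psi'} \wh X_{\psi'}$ and isolating the $\psi$-component via orthogonality of $\wh G$. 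Then \eqref{def-bent'} says that $\sum_{\alpha \in G} \ol{\psi(\alpha)} c_\alpha = \abs{X}$ for all $\psi \ne \psi_0$. Since also $\sum_{\alpha \in G} \ol{\psi_0(\alpha)} c_\alpha = \sum_\alpha c_\alpha$, and the all-$\psi$ version would give $\abs{X} \abs{G}$, consistency with $c_{1_G} = \abs{X}$ shows the $\psi_0$ equation is automatic; this matches the passage from \eqref{def-bent} to \eqref{def-bent'} already recorded in the excerpt.

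Finally, I would invert the character relation: the system $\sum_{\alpha \in G} \ol{\psi(\alpha)} c_\alpha = \abs{X}$ for all $\psi \in \wh G \setminus \{\psi_0\}$ (together with the automatic $\psi_0$ case) is, by Fourier inversion on the abelian group $G$, equivalent to $c_\alpha = 0$ for all $\alpha \in \gi$ (the constant function $\alpha \mapsto \abs{X}/\abs{G}$ on $G$ has $\psi$-coefficient $0$ for $\psi \ne \psi_0$, and pulling out $c_{1_G} = \abs{X}$ forces the rest to vanish). But $c_\alpha = \sum_{x \in X} f_\alpha'(x)$, so $c_\alpha = 0$ is exactly the statement that $f_\alpha'$ is balanced. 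Assembling these equivalences gives: $f$ is $G$-bent $\iff$ $f_\alpha'$ is balanced for all $\alpha \in \gi$, as claimed.

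The main obstacle I anticipate is the first displayed identity — specifically, justifying the interchange of summation and correctly handling the $G$-linearity bookkeeping. The subtlety is that a given $\l$ lies in exactly one $\wh X_{\psi'}$, so when I write $\sum_{\l \in \wh X} \l(\alpha x)\ol{\l(x)}$ I must track that $\l(\alpha x) = \ol{\psi'(\alpha)}\l(x)$ with $\psi'$ depending on $\l$; the orthogonality relation on $\wh G$ is what lets me decouple this and extract the single character $\psi$. If the excerpt's reference \cite[Corollary 3.5]{fx} already packages a Plancherel/Parseval statement for these Fourier transforms, I would lean on it to shortcut the bilinear expansion rather than re-deriving $\sum_{\l\in\wh X}\l(x)\ol{\l(y)} = \abs{X}\delta_{xy}$ from scratch.
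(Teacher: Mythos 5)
Your argument is essentially correct, but there is nothing in this paper to compare it against: Theorem \ref{thm-bent} is imported verbatim from \cite[Theorem 4.6]{fx} and the present paper gives no proof of it, so you have in effect reconstructed the proof from the cited source. The reconstruction is sound. Your central identity
$\sum_{\l \in \wh X_{\psi}} \bigabs{\wh f(\l)}^2 = \frac{\abs{X}}{\abs{G}} \sum_{\al \in G} \ol{\psi(\al)}\, c_\al$
with $c_\al = \sum_{x\in X} f(\al x)\ol{f(x)} = \sum_{x\in X} f'_\al(x)$ is exactly what you get by expanding $\bigabs{\wh f(\l)}^2$ bilinearly and invoking the paper's Lemma \ref{lem-basic}(i), i.e.\ $\sum_{\l \in \wh X_\psi}\l(x)\ol{\l(y)} = \frac{\abs{X}}{\abs{G}}\psi(G^+_{x,y})$, since $\sum_{x,y} f(x)\ol{f(y)}\psi(G^+_{x,y}) = \sum_{\al\in G}\psi(\al)\ol{c_\al} = \sum_{\al\in G}\ol{\psi(\al)}c_\al$ (using $\ol{c_\al}=c_{\al^{-1}}$). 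Your alternative derivation of that kernel identity from the Plancherel relation $\sum_{\l\in\wh X}\l(x)\ol{\l(y)}=\abs{X}\delta_{xy}$ together with the first orthogonality relation on $\wh G$ also works, and has the minor advantage of applying to an arbitrary \gdual, not only a normalized one (Lemma \ref{lem-basic} is stated only for \sgd s). The remaining steps are fine: $G$-bentness becomes $\sum_{\al}\ol{\psi(\al)}c_\al=\abs{X}$ for all non-principal $\psi$, the $\psi_0$ case is forced by Parseval as in the passage from \eqref{def-bent} to \eqref{def-bent'}, and Fourier inversion on $G$ together with $c_{1_G}=\abs{X}$ yields $c_\al=0$ for all $\al\in\gi$, which is precisely balancedness of $f'_\al$. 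The only places where you gesture rather than compute are the bookkeeping for the kernel identity and the inversion step ("pulling out $c_{1_G}$ forces the rest to vanish"); both check out when written down, e.g.\ the inversion gives $c_\al=\frac{1}{\abs{G}}\bigl(\wh c(\psi_0)-\abs{X}\bigr)$ for every $\al\ne 1_G$, and summing over $G$ against $c_{1_G}=\abs{X}$ forces $\wh c(\psi_0)=\abs{X}$ and hence $c_\al=0$.
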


\begin{thm}
\label{thm-pnf-bent}
{\rm (Cf. \cite[Theorem 5.2]{fx})}
A function $f: X \to H$ is \gpn\ if and only if for any non-principal $\xi \in \wh H$,
$\xi \circ f$ is a \gbf, i.e. (see Eqn \eqref{def-bent'})
$$
\sum_{\l \in \wh X_{\psi}} \bigabs{ \wh{(\xi \circ f)}(\l) }^2
=\frac{|X|^2}{|G|}, \quad \hbox{for all } \psi \in \wh G\setminus\{\psi_0\}.
$$
\end{thm}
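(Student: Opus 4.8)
The plan is to reduce both directions of the equivalence to one elementary statement about the derivatives $f'_\alpha$, using Theorem \ref{thm-bent} to decode ``\gbent'' and the orthogonality relations for the characters of the abelian group $H$ to decode ``\gpn''. No appeal to the Fourier transform definition of bentness is needed beyond what is already packaged in Theorem \ref{thm-bent}.

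The starting observation is that post-composition with a character commutes with taking derivatives: for $\xi \in \wh H$, $\alpha \in G$ and $x \in X$,
$$
(\xi \circ f)'_\alpha(x) = \xi\big(f(\alpha x)\big)\,\xi\big(f(x)\big)^{-1} = \xi\big(f(\alpha x) f(x)^{-1}\big) = \xi\big(f'_\alpha(x)\big),
$$
so $(\xi \circ f)'_\alpha = \xi \circ f'_\alpha$. Since $\abs{\xi(h)} = 1$ for all $h \in H$, the function $\xi \circ f$ maps $X$ into $T$, so Theorem \ref{thm-bent} applies and yields: $\xi \circ f$ is \gbent\ if and only if
$$
\sum_{x \in X} \xi\big(f'_\alpha(x)\big) = \sum_{\sigma \in H} \bigabs{{f'_\alpha}^{-1}(\sigma)}\,\xi(\sigma) = 0, \qquad \forall\, \alpha \in \gi,
$$
the middle expression arising by grouping the terms of the left-hand sum according to the value taken by $f'_\alpha$.

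For the forward direction, if $f$ is a \gpnf\ then $\bigabs{{f'_\alpha}^{-1}(\sigma)} = \abs{X}/\abs{H}$ for all $\alpha \in \gi$ and all $\sigma \in H$, so for any non-principal $\xi \in \wh H$ the displayed sum equals $(\abs{X}/\abs{H})\sum_{\sigma \in H}\xi(\sigma) = 0$ by the First Orthogonality Relation (applied to $\xi$ against the principal character of $H$); hence $\xi \circ f$ is \gbent. For the converse, fix $\alpha \in \gi$ and set $g(\sigma) := \bigabs{{f'_\alpha}^{-1}(\sigma)}$, a function on $H$. If $\xi \circ f$ is \gbent\ for every non-principal $\xi \in \wh H$, then $\sum_{\sigma} g(\sigma)\xi(\sigma) = 0$ for every $\xi$ other than the principal character of $H$, while trivially $\sum_{\sigma} g(\sigma) = \abs{X}$. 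Fourier inversion on $H$ — equivalently, solving this linear system by the orthogonality of the characters of $H$ — then forces $g(\sigma) = \abs{X}/\abs{H}$ for every $\sigma \in H$; as each $g(\sigma)$ is a nonnegative integer this also gives $\abs{H}\mid\abs{X}$, so $f$ is a \gpnf.

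The computation is essentially forced, and I anticipate no genuine obstacle; the only point demanding care is the bookkeeping that matches the quantifier ``for all $\sigma\in H$'' in the definition of a \gpnf\ against the quantifier ``for all non-principal $\xi\in\wh H$'' produced by bentness — precisely the Fourier-inversion step in the converse — together with recalling that a function on $H$ all of whose non-principal Fourier coefficients vanish must be constant.
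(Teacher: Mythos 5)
Your proof is correct. One thing to be aware of: the paper does not actually prove this statement --- Theorem \ref{thm-pnf-bent} is recalled verbatim from \cite[Theorem 5.2]{fx} as background, so there is no in-paper proof to compare against. Your derivation is a legitimate self-contained argument given the other recalled ingredient, Theorem \ref{thm-bent}: the identity $(\xi\circ f)'_{\al}=\xi\circ f'_{\al}$ (valid because $\xi$ is a homomorphism $H\to T$) converts ``$\xi\circ f$ is \gbent\ for every non-principal $\xi$'' into the vanishing of $\sum_{\sigma\in H}\bigabs{{f'_{\al}}^{-1}(\sigma)}\,\xi(\sigma)$ for all $\al\in G\setminus\{1_G\}$ and all non-principal $\xi\in\wh H$, and Fourier inversion on the abelian group $H$ shows this holds exactly when $\sigma\mapsto\bigabs{{f'_{\al}}^{-1}(\sigma)}$ is the constant $\abs{X}/\abs{H}$. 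You also correctly note that the divisibility condition $\abs{H}\mid\abs{X}$ in the definition of \gpn\ comes for free in the converse because the fibre sizes are integers. The only caveat is that your route leans entirely on Theorem \ref{thm-bent}, which in \cite{fx} is itself proved via the Fourier transform on $\wh X$; a proof ``from scratch'' would have to expand that step, but as a deduction within the framework the paper sets up, your argument is complete.
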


Let $X$ be a $G$-set with orbits $X_1, X_2, \dotsc, X_p$, and let 
$N_i := \{ \al \in G \nid \al x = x, \hbox{ for any } x \in X_i \}$, $1 \le i \le p$.  
 Let $\wh X$ be a \gdual\ of $X$. For any subset $D$ of $X$
and any $\l \in \wh X$, let
$$
\l(D) := \{ \l(x) \nid x \in D \} \quad \hbox{and} \quad 
\l(D)^+ := \sum_{x \in D} \l(x). 
$$
Note that $\l(D)^+ = 0$ if $D = \emptyset$. 

\begin{lemma} 
\label{lem-sgd}
With the notation in the above paragraph, 
there exists a \gdual\ $\wh X$  that satisfies the following conditions.
\begin{enumerate}
\item[$($i$)$] For any $\l \in \wh X$, there is exactly
one orbit $X_j$ such that $\l(X_j) \ne \{ 0 \}$.

\item[$($ii$)$] If $\l \in \wh X$ and $\l(X_j) \ne \{ 0 \}$ for some orbit $X_j$, 
then for any $x \in X_j$, $\abs{\l(x)} = \sqrt{\abs{X}/\abs{X_j}}$.

\item[$($iii$)$] For any non-principal \ic\ $\psi$ of $G$ and any
$\l \in \wh X_{\psi}$, $\l(X)^+ = 0$.

\item[$($iv$)$] For any $X_j$ and any $\psi \in \wh G$, 
 there is at most one $\l \in \wh{X}_{\psi}$ such that $\l(X_j) \ne \{ 0 \}$.

\item[$($v$)$] For any $\psi \in \wh G$, there is $\l \in \wh X_{\psi}$
such that $\l(X_j) \ne \{ 0 \}$ for some $X_j$ 
if and only if $N_j \subseteq \ker \psi$.

\item[$($vi$)$] 
For any $X_j$, there is exactly one $\l \in \wh{X}_{\psi_0}$ 
such that $\l(X_j) \ne \{ 0 \}$ (hence $\l(x) = \sqrt{\abs{X}/\abs{X_j}}$ 
for any $x \in X_j$), where $\psi_0$ is the principal \ic\ of $G$.
\end{enumerate} 
\end{lemma}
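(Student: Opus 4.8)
The plan is to build the required \gdual\ $\wh X$ explicitly, one orbit at a time, and take the union of the pieces; existence of \emph{some} \gdual\ is already guaranteed by \cite[Theorem 2.3]{fx}, but here we need one with tight control over supports, absolute values, and character types. Since $G$ is abelian, all point stabilizers inside a given orbit coincide, so for each orbit $X_j$ I would fix a base point $x_j\in X_j$, whose stabilizer is then exactly $N_j$, and note that $\alpha N_j\mapsto\alpha^{-1}x_j$ is a bijection $G/N_j\to X_j$. The key local fact is that for $\psi\in\wh G$ a $\psi$-linear function not identically zero on $X_j$ exists if and only if $N_j\subseteq\ker\psi$: such a function has $\l(x_j)\ne 0$, and then evaluating $\l(\gamma^{-1}x_j)=\psi(\gamma)\l(x_j)$ at $\gamma\in N_j$ forces $\psi|_{N_j}$ trivial; conversely, when $N_j\subseteq\ker\psi$ the formula $\l_{j,\psi}(\alpha^{-1}x_j):=\psi(\alpha)\sqrt{|X|/|X_j|}$ (with $\l_{j,\psi}(x):=0$ for $x\notin X_j$) is well defined, because $\alpha^{-1}x_j=\beta^{-1}x_j$ gives $\beta\alpha^{-1}\in N_j\subseteq\ker\psi$, hence $\psi(\alpha)=\psi(\beta)$, and a direct computation using the action axioms shows $\l_{j,\psi}$ is $\psi$-linear on all of $X$. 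I then set $\wh X:=\{\,\l_{j,\psi}\nid 1\le j\le p,\ N_j\subseteq\ker\psi\,\}$.

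The next step is to verify that $\wh X$ is a \gdual. Members attached to different orbits have disjoint supports, hence are orthogonal; within a single orbit, $\lrg{\l_{j,\psi},\l_{j,\psi'}}=\frac{|X|}{|X_j|}\sum_{\alpha N_j\in G/N_j}\psi(\alpha)\ol{\psi'(\alpha)}$, a well-defined sum over $G/N_j$ since $\psi,\psi'$ are trivial on $N_j$, which equals $|X|$ if $\psi=\psi'$ and $0$ otherwise by the First Orthogonality Relation for the group $G/N_j$ of order $|X_j|$. Thus $\wh X$ is an orthogonal family of $\sum_j|X_j|=|X|$ vectors of squared norm $|X|$ in the $|X|$-dimensional space $\c^X$, i.e. an $|X|$-normal orthogonal basis all of whose members are $G$-linear. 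For closure under complex conjugation, $\ol{\l_{j,\psi}}$ is supported on $X_j$ and satisfies $\ol{\l_{j,\psi}}(\alpha^{-1}x_j)=\ol{\psi(\alpha)}\sqrt{|X|/|X_j|}=\ol{\psi}(\alpha)\sqrt{|X|/|X_j|}$, so (the normalizing constant being real) $\ol{\l_{j,\psi}}=\l_{j,\ol{\psi}}$, and $\ol{\psi}$ is again trivial on $N_j$; hence $\ol{\l_{j,\psi}}\in\wh X$.

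Finally I would read off (i)--(vi). Properties (i), (ii), (iv), (v) and the uniqueness part of (vi) are immediate from the construction, since distinct $\l_{j,\psi}$ with $j$ fixed are $\psi$-linear for distinct $\psi$, and $|\l_{j,\psi}(x)|=\sqrt{|X|/|X_j|}$ for $x\in X_j$ by definition. For the existence part of (vi), $\psi_0$ is trivial on every $N_j$, so $\l_{j,\psi_0}$ occurs and is exactly the function with value $\sqrt{|X|/|X_j|}$ on $X_j$ and $0$ elsewhere. For (iii), if $\psi\ne\psi_0$ and $\l\in\wh X_\psi$, then $\l=\l_{j,\psi}$ for some $j$ with $N_j\subseteq\ker\psi$; as $\psi$ is trivial on $N_j$ but non-principal on $G$ it is non-principal as a character of $G/N_j$, so $\l(X)^+=\sqrt{|X|/|X_j|}\sum_{\alpha N_j\in G/N_j}\psi(\alpha)=0$. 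I expect the only real difficulty to be the bookkeeping: choosing the normalizing constants consistently so that the single family $\{\l_{j,\psi}\}$ is simultaneously $|X|$-normal, $G$-linear, and closed under conjugation, and correctly pinning down which characters $\psi$ can occur on a given orbit (precisely those trivial on $N_j$). Once $X_j$ is identified with $G/N_j$ this all reduces to the character theory of the finite abelian group $G/N_j$.
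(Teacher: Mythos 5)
Your construction is exactly the one in the paper: for each orbit $X_j$ you fix a base point, identify $X_j$ with $G/N_j$, define $\l_{j,\psi}(\al^{-1}x_j)=\psi(\al)\sqrt{\abs{X}/\abs{X_j}}$ for the characters $\psi$ trivial on $N_j$, and verify orthogonality via the First Orthogonality Relation on $G/N_j$. The argument is correct, and you in fact spell out a few verifications (closure under conjugation, property (iii), the ``only if'' part of (v)) that the paper leaves as ``clear.''
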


\begin{proof}
For any $1 \le j \le p$, let $\wh{G}_j := \{ \psi \in \wh G \nid \ker \psi \supseteq N_j \}$.
Then $\wh{G}_j$ is a subgroup of $\wh G$ and is 
isomorphic to the dual group $\wh{G/N_j}$ of the quotient group $G/N_j$. Fix $x_{j0} \in
X_j$. For any $\psi \in  \wh{G}_j$, define
$$
\l_{j \psi} : X \to \c, \quad x \mapsto 
\begin{cases}
\sqrt{\abs{X}/\abs{X_j}}\,\psi(\al^{-1}), & 
             \hbox{if } x = \al x_{j0} \hbox{ for some } \al \in G; \\
0, & \hbox{if } x \notin X_j.
\end{cases}
$$
Note that if $x = \al x_{j0} = \beta x_{j0}$ for some $\al, \beta \in G$, then $\al \beta^{-1}
\in N_j$, and hence $\psi(\al^{-1}) = \psi(\beta^{-1})$ for any $\psi \in \wh{G}_j$. Thus, 
$\l_{j \psi}$ is well defined. It is clear that $\l_{j \psi}$ is $\psi$-linear. For any 
$\psi, \varphi \in \wh{G}_j$, the isomorphism of   $\wh{G}_j$ and $\wh{G/N_j}$
and the First Orthogonality Relation of \ic s imply that 
$$
\sum_{\al N_j  \in G/N_j} \psi(\al) \ol{\varphi(\al)} = 
\begin{cases}
\abs{G/N_j}, & \hbox{if } \psi = \varphi; \\
0,  & \hbox{if } \psi \ne \varphi.
\end{cases}
$$
Note that $\abs{X_j} = \abs{G}/\abs{N_j}$. So for any $\psi, \varphi \in \wh{G}_j$,
$$
\lrg{\l_{j \psi}, \l_{j \varphi}} = \sum_{x \in X_j}  \l_{j \psi}(x) \ol{\l_{j \varphi}(x)}
= \frac{\abs{X}}{\abs{X_j}} \sum_{\al N_j  \in G/N_j} \psi(\al) \ol{\varphi(\al)} 
= \begin{cases}
\abs{X}, & \hbox{if } \psi = \varphi; \\
0,  & \hbox{if } \psi \ne \varphi.
\end{cases}
$$
Let $\wh X := \bigcup_{j =1}^p \{ \l_{j \psi} \nid  \psi \in  \wh{G}_j \}$. Then it is
clear that $\wh X$ is a \gdual\ that satisfies all the conditions (i) -- (vi) of the lemma. 
(Note that the principal \ic\ $\psi_0 \in \wh{G}_j$ for all $j$.) 
\end{proof}

\begin{defn} \rm
\label{defn-sgd}
Any \gdual\ $\wh X$ that satisfies all the conditions in Lemma \ref{lem-sgd} 
is called a \it{\sgd}. If $\wh X$ is a \sgd, then the \it{support} of 
$\l \in \wh X$ is the unique orbit $X_j$ such that $\l(X_j) \ne \{ 0 \}$. 
\end{defn}

For any subset $K$ of $G$, let $K^{(-)} := \{ \al^{-1} \nid \al \in K \}$, and
$K^+ = \sum_{\al \in K} \al \in \c G$, where $\c G$ is the group algebra of $G$ 
over $\c$. Note that $K^+ =
0$ if $K = \emptyset$. For any $(x, y) \in X \times X$, let
\begin{equation}
\label{eq-gxy}
G_{x, y} := \{ \al \in G \nid \al x = y \}.
\end{equation}
Note that $G_{y, x} = G_{x, y}^{(-1)}$.

Any $\psi \in \wh G$ can be linearly extended to a function (also denoted by $\psi$)
$\psi: \c G \to \c$.
The following is a generalization of the second orthogonality relation
(see \cite[Lemma 2.5]{fx}).

\begin{lemma}
\label{lem-basic}
Let $X$ be a $G$-set,  
$\wh X$ a \sgd\ of $X$,  and $\psi \in \wh G$. Then the following hold.
\begin{enumerate}
\item[$($i$)$] For any $(x, y) \in X \times X$,
\begin{equation}
\label{eq-lxy}
\sum_{\l \in \wh X_{\psi}} \l(x) \ol{\l(y)} = 
\frac{\abs{X}}{\abs{G}} \,\psi ( G^+_{x, y}).
\end{equation}
\item[$($ii$)$] For any subsets $C, D$ of $X$,
\begin{equation}
\label{eq-lcd}
\sum_{\l \in \wh X_{\psi}} \l(C)^+ \ol{\l(D)^+} = 
\frac{\abs{X}}{\abs{G}} \,\psi \left( \sum_{(x, y) \in C \times D} G^+_{x, y} \right).
\end{equation}
\end{enumerate}
\end{lemma}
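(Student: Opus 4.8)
The plan is to prove part (i) first by a direct computation using the explicit construction of the \sgd\ from Lemma \ref{lem-sgd}, and then to derive part (ii) from part (i) by bilinearity. For part (i), fix $(x,y) \in X \times X$. The sum on the left-hand side of Eqn \eqref{eq-lxy} runs over $\l \in \wh X_{\psi}$. By condition (i) of Lemma \ref{lem-sgd}, each such $\l$ has a unique support orbit $X_j$, and $\l(x)\ol{\l(y)} = 0$ unless both $x$ and $y$ lie in that same orbit. So the sum is nonzero only when $x, y$ belong to a common orbit $X_j$, in which case $G_{x,y} \ne \emptyset$; and conversely, if $x, y$ lie in different orbits then $G_{x,y} = \emptyset$, so $\psi(G^+_{x,y}) = \psi(0) = 0$ and both sides vanish. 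This reduces everything to the case $x, y \in X_j$ for a single orbit $X_j$.

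In that case, by conditions (iv) and (v) of Lemma \ref{lem-sgd}, there is at most one $\l \in \wh X_{\psi}$ with support $X_j$, and such an $\l$ exists precisely when $N_j \subseteq \ker\psi$; when $N_j \not\subseteq \ker\psi$ the left side is $0$, while the right side is $\frac{\abs X}{\abs G}\psi(G^+_{x,y})$ with $G_{x,y}$ a coset of $N_j$ on which $\psi$ is nonconstant, hence $\psi(G^+_{x,y}) = 0$ too. So assume $N_j \subseteq \ker\psi$ and let $\l$ be the unique \gdual\ element with support $X_j$ that is $\psi$-linear; using the explicit formula $\l = \l_{j\psi}$ from the proof of Lemma \ref{lem-sgd}, write $x = \al x_{j0}$, $y = \beta x_{j0}$, so $\l(x)\ol{\l(y)} = \frac{\abs X}{\abs{X_j}}\psi(\al^{-1})\ol{\psi(\beta^{-1})} = \frac{\abs X}{\abs{X_j}}\psi(\beta\al^{-1})$. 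On the other hand $G_{x,y} = \{\g \in G \nid \g\al x_{j0} = \beta x_{j0}\} = \beta\al^{-1} N_j$, so $\psi(G^+_{x,y}) = \abs{N_j}\psi(\beta\al^{-1})$ since $\psi$ is trivial on $N_j$. Combining these with $\abs{X_j} = \abs G/\abs{N_j}$ gives $\l(x)\ol{\l(y)} = \frac{\abs X}{\abs G}\psi(G^+_{x,y})$, which is exactly Eqn \eqref{eq-lxy} since the left sum has this single term.

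For part (ii), expand each side. The left side of Eqn \eqref{eq-lcd} is
$$
\sum_{\l \in \wh X_{\psi}} \Big(\sum_{x \in C}\l(x)\Big)\overline{\Big(\sum_{y \in D}\l(y)\Big)}
= \sum_{(x,y) \in C \times D} \sum_{\l \in \wh X_{\psi}} \l(x)\ol{\l(y)},
$$
where I interchange the finite sums freely. Applying part (i) to each inner sum turns this into $\frac{\abs X}{\abs G}\sum_{(x,y)\in C\times D}\psi(G^+_{x,y}) = \frac{\abs X}{\abs G}\,\psi\big(\sum_{(x,y)\in C\times D}G^+_{x,y}\big)$, using the linear extension of $\psi$ to $\c G$; this is the right side of Eqn \eqref{eq-lcd}, so the proof is complete. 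I do not anticipate a genuine obstacle here: the content is entirely in unwinding the construction behind Lemma \ref{lem-sgd}. The one point that needs care is the bookkeeping in part (i) across the several cases ($x,y$ in different orbits; $x,y$ in a common orbit with $N_j \subseteq \ker\psi$ or not), making sure that in every case where the left-hand sum vanishes the quantity $\psi(G^+_{x,y})$ also vanishes — each such vanishing comes down to $\psi$ summing to zero over a nontrivial coset of $\ker\psi$, i.e. the first orthogonality relation applied in $G/\ker\psi$.
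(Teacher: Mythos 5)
Your proof is correct and follows essentially the same route as the paper's: the same case analysis for (i) (different orbits; same orbit with $N_j\subseteq\ker\psi$ or not, the latter handled via $\psi(N_j^+)=0$), with (ii) obtained by bilinearity. The only cosmetic difference is that in the key case you invoke the explicit formula $\l_{j\psi}$ from the construction in the proof of Lemma \ref{lem-sgd} rather than arguing from $\psi$-linearity and condition (ii) of that lemma for an arbitrary \sgd; this is harmless, since the unique element of $\wh X_{\psi}$ supported on $X_j$ can differ from $\l_{j\psi}$ only by a unimodular scalar, which cancels in the product $\l(x)\ol{\l(y)}$.
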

 
\begin{proof}
(i) If $x$ and $y$ are not in the same orbit of $G$, then $G^+_{x, y} = 0$, and 
for any $\l \in \wh X$, $\l(x) \ol{\l(y)} = 0$ by Lemma \ref{lem-sgd}(i).
So Eqn \eqref{eq-lxy} hols. Now assume that 
$x$ and $y$ are in the same orbit $X_j$. Then there is $\al \in G$ such that $x = 
\al y$, and hence $G_{x, y} = N_j \al$. If for all $\l \in \wh X_{\psi}$, $\l(X_j)
= \{ 0 \}$, then $\sum_{\l \in \wh X_{\psi}} \l(x) \ol{\l(y)} = 0$. Also by 
Lemma \ref{lem-sgd}(v),  $N_j \not\subseteq \ker \psi$. So $\psi |_{N_j}$
is not the principal \ic\ of $N_j$, and hence $\psi(N_j^+) = 0$. Thus, 
$\psi(G_{x, y}^+) = \psi(N_j^+) \psi(\al) = 0$, and Eqn \eqref{eq-lxy} holds.
 If there is a $\l \in \wh{X}_{\psi}$ such that $\l(X_j) \ne \{ 0 \}$, then $N_j 
\subseteq \ker \psi$, and $\al x = y$ implies that $\l(x) = \l(\al^{-1} y) =
\psi(\al)\l(y)$. 
So $\psi(G_{x, y}^+) = \psi(N_j^+) \psi(\al) = \abs{N_j} \psi(\al)$, 
and Lemma \ref{lem-sgd}(ii) yields that,
$$
\l(x) \ol{\l(y)} = \psi(\al) \abs{\l(y)}^2 =
\frac{\abs{X}}{\abs{X_j}} \psi(\al) = 
\frac{\abs{X}}{\abs{X_j}}\frac{1}{\abs{N_j}} \psi(G_{x, y}^+) =
\frac{\abs{X}}{\abs{G}} \psi(G_{x, y}^+).
$$
Note that in this case there is only one $\l \in \wh{X}_{\psi}$ such that $\l(X_j) \ne \{ 0 \}$
by Lemma \ref{lem-sgd}(iv). So Eqn \eqref{eq-lxy} holds.

(ii) follows from (i) directly.
\end{proof}

\section{\large $G$-\dset s
\label{sect-set} }

As in Section \ref{sect-pre}, we assume that both $G$ and $H$ are abelian. 
In this section we first study characterizations
of $G$-\dset s of $X$ via a \sgd\ $\wh X$ of $X$. Then we describe 
the relations between $G$-\dset s and \gpnf s by the \four s on $\wh X$.
As direct consequences, we will obtain some known results in \cite{dp, ph}.

The next lemma is straightforward.

\begin{lemma}
\label{lem-dset} Let $D$ be a nonempty subset of $X$. 
Then $D$ is a $G$--$(v, k, \ell)$ \dsx\ if and only if
$$
\sum_{(x, y) \in D \times D} G_{x, y}^+ = k \cdot 1_G + \ell (\gi)^+
=(k-\ell)1_G+\ell G^+, 
$$
where $G_{x, y}$ is the same as in Eqn \eqref{eq-gxy}.
\end{lemma}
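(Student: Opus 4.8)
The plan is to unwind both sides of the claimed identity as elements of the group algebra $\c G$ and compare coefficients of each group element $\al \in G$. Recall that for a subset $D$ of $X$, the element $\sum_{(x,y) \in D \times D} G_{x,y}^+$ lives in $\c G$, and for fixed $\al \in G$ its coefficient is $\big| \{ (x,y) \in D \times D \nid \al x = y \} \big|$, which by Eqn \eqref{alpha x=y} equals $\abs{\al D \cap D}$. So the whole statement reduces to a coefficient-by-coefficient comparison, and there is essentially nothing to prove beyond bookkeeping.

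First I would handle $\al = 1_G$. The coefficient of $1_G$ on the left is $\abs{1_G D \cap D} = \abs{D} = k$. On the right-hand side, written as $(k-\ell)1_G + \ell G^+$, the coefficient of $1_G$ is $(k-\ell) + \ell = k$; and in the form $k \cdot 1_G + \ell(\gi)^+$ the coefficient of $1_G$ is simply $k$. So the $1_G$-coefficients agree automatically, for any subset $D$.

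Next I would treat $\al \in \gi$. The coefficient of $\al$ on the left is $\abs{\al D \cap D}$. On the right-hand side (either form) the coefficient of $\al$ is $\ell$. Hence the identity holds if and only if $\abs{\al D \cap D} = \ell$ for every $\al \in \gi$. But by the observation recorded in the excerpt just after Eqn \eqref{alpha x=y}, a subset $D$ of cardinality $k$ is a $G$--$(v,k,\ell)$ difference set of $X$ precisely when $\abs{\al D \cap D} = \ell$ for all $\al \in \gi$. This is exactly the stated equivalence. I would also note in passing that $k \cdot 1_G + \ell(\gi)^+ = (k-\ell)1_G + \ell G^+$ since $G^+ = 1_G + (\gi)^+$, which justifies the two equivalent forms on the right.

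There is no real obstacle here — the lemma is labelled "straightforward" precisely because it is just the translation of the combinatorial definition into group-algebra language via Eqn \eqref{alpha x=y}. The only point requiring a word of care is the hypothesis that $D$ be nonempty: this guarantees $\abs{D}=k$ is a genuine parameter in the notation "$G$--$(v,k,\ell)$ difference set" and that the statement "$D$ has cardinality $k$" is part of what is being asserted on the right. One should state clearly at the outset that $k$ denotes $\abs{D}$, so that the coefficient computation for $1_G$ is unambiguous.
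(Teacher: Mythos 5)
Your proof is correct and is exactly the intended argument: the paper omits the proof as ``straightforward,'' and the natural justification is precisely your coefficient-by-coefficient comparison, using Eqn \eqref{alpha x=y} to identify the coefficient of $\al$ in $\sum_{(x,y)\in D\times D}G_{x,y}^+$ with $\abs{\al D\cap D}$ and then invoking the definition of a $G$--$(v,k,\ell)$ difference set. Nothing is missing.
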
 

For any subsets $C, D$ of $X$, let 
\begin{equation}
\label{eq-delta-cd}
\delta(C, D) := \sum_{(x, y) \in C \times D} \abs{G_{x, y}}. 
\end{equation}
It is clear that $\delta(C, D) = \delta(D, C)$.  Note that for any $x \in X$, 
$\sum_{y \in X} \abs{G_{x, y}} = \abs{G}$. Hence $\delta(C, X) = \abs{C} \cdot 
\abs{G}$. If $D$ is a $G$--$(v, k, \ell)$ \dset, then 
$\delta(D, D) = k + \ell(\abs{G} - 1)$ by Lemma \ref{lem-dset}, and 
$\delta(D, X \setminus D) = \delta(D, X) - \delta(D, D) = (k - \ell)(\abs{G} - 1)$.

The next lemma is known but does not appear in the standard references in the 
representation theory of finite groups. We need this result in this section and
the next section.

\begin{lemma}
\label{lem-zero}
Let $a \in \c G$ and $\gamma,\mu \in \c$. Then the following hold.
\begin{enumerate}
\item[$($i$)$] $a =\mu \cdot 1_G + \gamma G^+$
if and only if for any non-principal $\psi \in \wh G$, $\psi(a) = \mu$.
\item[$($ii$)$] $a = \mu \cdot 1_G$ 
if and only if for all $\psi\in\wh G$, $\psi(a) = \mu$.
\end{enumerate}
\end{lemma}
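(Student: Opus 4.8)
The plan is to prove Lemma \ref{lem-zero} by expanding $a$ in the natural basis $G$ of $\c G$ and using the linear extension of characters together with the orthogonality relations for $\wh G$. Write $a = \sum_{\beta \in G} a_\beta \beta$ with $a_\beta \in \c$. The forward directions are immediate: if $a = \mu \cdot 1_G + \gamma G^+$, then for non-principal $\psi$ we have $\psi(a) = \mu \psi(1_G) + \gamma \psi(G^+) = \mu + \gamma \cdot 0 = \mu$ since $\psi(G^+) = \sum_{\beta \in G}\psi(\beta) = 0$ by the First Orthogonality Relation (comparing $\psi$ with $\psi_0$); and if $a = \mu \cdot 1_G$, then $\psi(a) = \mu$ for every $\psi$, including $\psi_0$. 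So the content is in the converse directions.

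For (i), suppose $\psi(a) = \mu$ for every non-principal $\psi \in \wh G$. Set $b := a - \mu \cdot 1_G$, so $\psi(b) = 0$ for all non-principal $\psi$. The goal is to show $b = \gamma G^+$ for some $\gamma$, i.e. that all coordinates of $b$ in the basis $G$ are equal. The standard way to extract coordinates is the inverse Fourier (Fourier inversion on the group $G$): for each $\beta \in G$, $b_\beta = \frac{1}{\abs{G}} \sum_{\psi \in \wh G} \psi(b) \ol{\psi(\beta)}$. Since $\psi(b) = 0$ for all $\psi \ne \psi_0$, this collapses to $b_\beta = \frac{1}{\abs{G}} \psi_0(b) \ol{\psi_0(\beta)} = \frac{\psi_0(b)}{\abs{G}}$, which is independent of $\beta$. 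Hence $b = \gamma G^+$ with $\gamma = \psi_0(b)/\abs{G}$, so $a = \mu \cdot 1_G + \gamma G^+$ as required. (One should double-check the Fourier inversion formula is available or prove it in one line from the orthogonality relation $\sum_{\psi}\psi(\beta)\ol{\psi(\g)} = \abs{G}\delta_{\beta\g}$, the second orthogonality relation, which the paper has essentially invoked; alternatively argue directly that $\sum_\psi \psi(b)\ol{\psi(\beta)} = \sum_\psi \sum_{\g} b_\g \psi(\g)\ol{\psi(\beta)} = \sum_\g b_\g \abs{G}\delta_{\g\beta} = \abs{G} b_\beta$.)

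For (ii), suppose $\psi(a) = \mu$ for all $\psi \in \wh G$, including $\psi_0$. By part (i) (applied with the hypothesis restricted to non-principal $\psi$) we already get $a = \mu \cdot 1_G + \gamma G^+$ for some $\gamma$. Now evaluate at the principal character: $\mu = \psi_0(a) = \mu \cdot 1 + \gamma \abs{G}$, forcing $\gamma \abs{G} = 0$, hence $\gamma = 0$, so $a = \mu \cdot 1_G$. Conversely $a = \mu \cdot 1_G$ clearly gives $\psi(a) = \mu$ for every $\psi$.

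I do not anticipate a genuine obstacle here; the lemma is a routine consequence of character orthogonality once one recalls Fourier inversion on the finite abelian group $G$. The only point requiring a little care is bookkeeping: making sure the hypothesis in (i) is only about non-principal characters (so the argument must not secretly use $\psi_0$), and conversely that in (ii) one genuinely uses the extra information at $\psi_0$ to kill the $G^+$ term. If one prefers to avoid quoting Fourier inversion as a black box, the cleanest self-contained route is the direct double-sum computation $\sum_{\psi \in \wh G} \psi(b)\,\ol{\psi(\beta)} = \abs{G}\,b_\beta$ sketched above, which uses only the orthogonality relation already recalled in Section \ref{sect-pre}.
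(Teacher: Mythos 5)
Your proof is correct. Note, however, that the paper itself gives no proof of Lemma \ref{lem-zero}: it is stated as ``known'' and used as a black box, so there is no argument in the paper to compare yours against. Your route --- reduce to $b := a - \mu\cdot 1_G$, then recover the coefficients $b_\beta$ by the inversion formula $\abs{G}\,b_\beta = \sum_{\psi\in\wh G}\psi(b)\ol{\psi(\beta)}$, which follows in one line from the column (second) orthogonality relation --- is the standard one, and your bookkeeping is right: the converse of (i) uses only the vanishing of $\psi(b)$ at non-principal $\psi$ (leaving the constant value $\psi_0(b)/\abs{G}$ as the coefficient $\gamma$ of $G^+$), and the converse of (ii) uses the additional datum $\psi_0(a)=\mu$ precisely to force $\gamma=0$. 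One small point worth making explicit: as literally stated the lemma fixes $\gamma$ in advance, but the backward implication in (i) can only produce \emph{some} $\gamma$ (determined by $a$); this existential reading is the one the paper actually uses in the proof of Theorem \ref{thm-set-char} (``$\dots = (k-\ell)\cdot 1_G + \gamma G^+$ for some $\gamma\in\c$''), and it is the reading your proof correctly supplies.
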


We have a characterization of $G$-difference sets in terms of $G$-dual sets.

\begin{thm}
\label{thm-set-char}
Let $\wh X$ be a \sgd. Then a nonempty subset $D$ of $X$ 
is a $G$--$(v, k, \ell)$ \dsx\ if and only if
for any non-principal $\psi \in \wh G$,
\begin{equation}
\label{eq-l-set}
 \sum_{\l \in \wh X_{\psi}} \bigabs{\l(D)^+}^2 = \frac{\abs{X}}{\abs{G}}(k - \ell).
\end{equation}  
\end{thm}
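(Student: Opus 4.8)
The plan is to combine Lemma~\ref{lem-dset}, Lemma~\ref{lem-basic}(ii), and Lemma~\ref{lem-zero}(i) to translate the group-algebra identity characterizing a $G$-difference set into the Fourier-side identity \eqref{eq-l-set}. First I would apply Lemma~\ref{lem-dset}: $D$ is a $G$--$(v,k,\ell)$ difference set of $X$ if and only if the element $a := \sum_{(x,y)\in D\times D} G_{x,y}^+ \in \c G$ equals $(k-\ell)1_G + \ell G^+$. By Lemma~\ref{lem-zero}(i) with $\mu = k-\ell$ and $\gamma = \ell$, this holds if and only if $\psi(a) = k-\ell$ for every non-principal $\psi \in \wh G$. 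So the whole statement reduces to showing that, for a fixed \sgd\ $\wh X$ and a fixed non-principal $\psi$, we have $\psi(a) = k-\ell$ if and only if $\sum_{\l\in\wh X_\psi}\bigabs{\l(D)^+}^2 = \frac{\abs{X}}{\abs{G}}(k-\ell)$.

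The bridge between these two quantities is exactly Lemma~\ref{lem-basic}(ii) applied with $C = D$: it gives
$$
\sum_{\l\in\wh X_\psi}\l(D)^+\,\ol{\l(D)^+} = \frac{\abs{X}}{\abs{G}}\,\psi\!\left(\sum_{(x,y)\in D\times D} G_{x,y}^+\right) = \frac{\abs{X}}{\abs{G}}\,\psi(a).
$$
The left-hand side is $\sum_{\l\in\wh X_\psi}\bigabs{\l(D)^+}^2$, which is a nonnegative real number, so $\psi(a)$ is automatically a nonnegative real as well. Hence $\sum_{\l\in\wh X_\psi}\bigabs{\l(D)^+}^2 = \frac{\abs{X}}{\abs{G}}(k-\ell)$ holds if and only if $\psi(a) = k-\ell$, and ranging over all non-principal $\psi$ finishes the equivalence. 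Since each of the three cited lemmas is an "if and only if" over the same index set (all non-principal $\psi \in \wh G$), the chain of equivalences is clean and no extra case analysis is needed.

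I do not anticipate a serious obstacle here; the result is essentially a repackaging of the orthogonality relation in Lemma~\ref{lem-basic}. The one point that deserves a sentence of care is the direction where we start from \eqref{eq-l-set} and want to recover the difference-set property: there we must be sure that \eqref{eq-l-set} holding for \emph{all} non-principal $\psi$ is what Lemma~\ref{lem-zero}(i) consumes, and that the value $k$ appearing in "$G$--$(v,k,\ell)$" is forced to be $\abs{D}$ — but that is built into Lemma~\ref{lem-dset}, whose statement already fixes $k = \abs{D}$ via the coefficient of $1_G$ on the diagonal pairs $x=y$. A short remark could also note that nonemptiness of $D$ is only needed to make "$G$--$(v,k,\ell)$ difference set" meaningful in the sense of the cited definition, and plays no role in the Fourier computation itself.
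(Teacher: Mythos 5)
Your proposal is correct and follows essentially the same route as the paper's proof: Lemma~\ref{lem-basic}(ii) with $C=D$ converts the Fourier-side sum into $\frac{\abs{X}}{\abs{G}}\psi(a)$ for $a=\sum_{(x,y)\in D\times D}G_{x,y}^+$, and Lemmas~\ref{lem-dset} and \ref{lem-zero}(i) close the loop. The one step to tighten is your invocation of Lemma~\ref{lem-zero}(i) ``with $\gamma=\ell$'': in the converse direction that lemma only yields $a=(k-\ell)1_G+\gamma G^+$ for \emph{some} $\gamma\in\c$, and one must then compare the coefficient of $1_G$ on both sides (it equals $\abs{D}=k$ in $a$, since $1_G\in G_{x,y}$ exactly for the diagonal pairs $x=y$) to force $\gamma=\ell$ --- this is precisely the computation you gesture at in your final paragraph and the one the paper's proof makes explicit.
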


\begin{proof}
For any $\psi \in \irr(G)$, Lemma \ref{lem-basic}(ii) implies that 
$$
 \sum_{\l \in \wh X_{\psi}} \bigabs{\l(D)^+}^2
=  \sum_{\l \in \wh X_{\psi}} \l(D)^+ \ol{\l(D)^+} =
\frac{\abs{X}}{\abs{G}} \, \psi \left( \sum_{(x, y) \in D \times D} G_{x, y}^+ \right).
$$
Thus,  if $D$ is a $G$--$(v, k, \ell)$ \dset, then for any non-principal $\psi \in \wh G$,
$\psi \left( \sum_{(x, y) \in D \times D} G_{x, y}^+ \right)$ $ = k - \ell$
by Lemma \ref{lem-dset}. Hence, Eqn \eqref{eq-l-set} holds.
 On the other hand, assume that for any non-principal $\psi \in \wh G$,
Eqn \eqref{eq-l-set} holds.
Then $\psi \left( \sum_{(x, y) \in D \times D} G_{x, y}^+ \right)$ $ = k - \ell$,
and hence  by Lemma \ref{lem-zero},
$$
\sum_{(x, y) \in D \times D} G_{x, y}^+  =  (k - \ell) \cdot 1_G + \g G^+, \quad
\hbox{ for some } \g \in \c.
$$ 
Since $1_G \in G_{x, x}$ for any $x \in X$, comparing the coefficient of $1_G$ in both 
sides of the above equality, we see that $\g = \ell$ and 
$\sum_{(x, y) \in D \times D} G_{x, y}^+  =  k \cdot 1_G + \ell (\gi)^+$. 
 So $D$ is a $G$--$(v, k, \ell)$ \dset\ by Lemma \ref{lem-dset}.
\end{proof}

Let $X_1, \dotsc, X_p$ be the orbits of the action of $G$ on $X$. 
Let $D$ be a $G$-\dsx, $C_i = D \cap X_i$ or $X_i \setminus (D \cap X_i)$,
$1 \le i \le p$, and $C = \bigcup_{i=1}^p C_i$. Let $\wh X$ be a \sgd\ of $X$.
Then for any non-principal $\psi \in \wh G$ and $\l \in \wh X_{\psi}$, 
since there is only one orbit $X_j$ such that $\l(X_j) \ne \{ 0 \}$ and 
$\l(X_j)^+ = 0$ by Lemma \ref{lem-sgd}, we see that
$\l(D)^+ = \l(D \cap X_j)^+ = - \l(X_j \setminus (D \cap X_j))^+ =
\pm \l(C_j)^+ = \pm \l(C)^+$. Hence, $C$ is also a $G$-\dset\
by Theorem \ref{thm-set-char}. 
A result similar to this observation for arbitrary \fg s was proved in 
\cite[Theorem 4.3]{dp}. 

From Theorems \ref{thm-pnf-bent} and \ref{thm-2-3} (or Corollary \ref{cor-dp-2-pnf}),
 we have the following corollary immediately.

\begin{cor}
Let $G$ be a \fag\ acting on a finite set $X$. Let $\wh X$ be a \sgd\ of $X$, and
$f: X \to \f_2$ a function. Then $f^{-1}(1)$ is a $G$--$(v, k, k - v/4)$ \dset\
if and only if 
 for any non-principal $\psi \in \wh G$ and non-principal \ic\ $\xi$ of $\f_2$,
$\sum_{\l \in \wh X_{\psi}} \bigabs{\wh{(\xi \circ f)}(\l)}^2 = 
\abs{X}^2 / \abs{G}$.
\end{cor}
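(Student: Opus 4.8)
The plan is to derive the corollary by combining Theorem \ref{thm-set-char} (with $\ell = k - v/4$) and Theorem \ref{thm-pnf-bent} (in the special case $H = \f_2$), using the fact that $\f_2$ has exactly one non-principal \ic. First I would note that since $\f_2$ is the additive group of order $2$, its dual $\wh{\f_2}$ has exactly two characters: the principal one $\xi_0$ and a unique non-principal character $\xi$. So the phrase ``for all non-principal $\xi \in \wh{\f_2}$'' in Theorem \ref{thm-pnf-bent} really refers to this single $\xi$, and the displayed condition there becomes: $f$ is $G$-perfect nonlinear if and only if $\sum_{\l \in \wh X_{\psi}} \bigabs{\wh{(\xi \circ f)}(\l)}^2 = \abs{X}^2 / \abs{G}$ for all $\psi \in \wh G \setminus \{ \psi_0 \}$, which is precisely the right-hand condition in the corollary.

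Next I would connect $G$-perfect nonlinearity of $f$ to the difference set property of $D := f^{-1}(1)$. By Corollary \ref{cor-dp-2-pnf} (or equivalently Theorem \ref{thm-2-3}(i)), $f: X \to \f_2$ is $G$-perfect nonlinear if and only if $4 \mid v$ and $f^{-1}(1)$ is a $G$--$(v, k, k - v/4)$ \dset\ (where $k = \abs{f^{-1}(1)}$). So the two sides of the claimed equivalence are both equivalent to ``$f$ is $G$-perfect nonlinear,'' and the corollary follows. If one wants to avoid the implicit divisibility hypothesis $4 \mid v$, I would instead argue directly: apply Theorem \ref{thm-set-char} to $D = f^{-1}(1)$ with $\ell = k - v/4$, which says $D$ is a $G$--$(v, k, k - v/4)$ \dset\ iff $\sum_{\l \in \wh X_{\psi}} \bigabs{\l(D)^+}^2 = \frac{\abs{X}}{\abs{G}} \cdot \frac{v}{4} = \frac{v^2}{4 \abs{G}}$ for all non-principal $\psi$.

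The remaining step is then purely computational: I must identify $\l(D)^+$ with $\wh{(\xi \circ f)}(\l)$ up to a harmless factor. Writing $\xi$ for the unique non-principal \ic\ of $\f_2$, we have $\xi(0) = 1$ and $\xi(1) = -1$, so $(\xi \circ f)(x) = 1$ if $x \notin D$ and $= -1$ if $x \in D$, i.e. $\xi \circ f = \mathbf{1}_X - 2\cdot\mathbf{1}_D$ where $\mathbf{1}_S$ denotes the indicator function of $S$. Therefore, for $\l \in \wh X_\psi$ with $\psi$ non-principal, $\wh{(\xi \circ f)}(\l) = \sum_{x \in X} (\xi \circ f)(x)\l(x) = \l(X)^+ - 2\l(D)^+ = -2\l(D)^+$, using $\l(X)^+ = 0$ from Lemma \ref{lem-sgd}(iii). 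Hence $\bigabs{\wh{(\xi \circ f)}(\l)}^2 = 4\bigabs{\l(D)^+}^2$, and summing over $\l \in \wh X_\psi$ shows that $\sum_{\l \in \wh X_\psi}\bigabs{\wh{(\xi \circ f)}(\l)}^2 = \abs{X}^2/\abs{G}$ is equivalent to $\sum_{\l \in \wh X_\psi}\bigabs{\l(D)^+}^2 = \abs{X}^2/(4\abs{G}) = \frac{\abs{X}}{\abs{G}}(k - (k - v/4))$, which by Theorem \ref{thm-set-char} is exactly the condition that $D = f^{-1}(1)$ is a $G$--$(v, k, k - v/4)$ \dset.

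I do not expect any serious obstacle here: the only subtlety is being careful that the factor of $4$ lands correctly (it cancels the $v/4$ against the $v^2/(4\abs{G})$), and making sure the vanishing $\l(X)^+ = 0$ is invoked, which requires $\psi$ to be non-principal — but that is exactly the range of $\psi$ over which both conditions are stated. The cleanest writeup is probably the direct one via Theorem \ref{thm-set-char} and the identity $\wh{(\xi\circ f)}(\l) = -2\l(D)^+$, rather than routing through Corollary \ref{cor-dp-2-pnf}, since it keeps everything on the Fourier side and makes the bookkeeping transparent.
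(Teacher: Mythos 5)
Your proposal is correct, and its primary route --- observing that both sides of the equivalence are restatements of ``$f$ is \gpn'' via Theorem \ref{thm-pnf-bent} (with the unique non-principal $\xi \in \wh{\f_2}$) and Corollary \ref{cor-dp-2-pnf} --- is exactly the paper's own one-line derivation, which cites precisely those two results. Your additional direct argument via Theorem \ref{thm-set-char} and the identity $\wh{(\xi \circ f)}(\l) = -2\,\l(D)^+$ for non-principal $\psi$ (using $\l(X)^+ = 0$ from Lemma \ref{lem-sgd}(iii)) is a correct, self-contained alternative that the paper does not spell out, and the factor of $4$ does cancel as you claim.
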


The method developed in this section is useful in other places.
For example, we can easily prove the following

\begin{cor}
{\rm (Cf. \cite[Theorem 6]{ph})}
\label{cor-reg}
Let $G$ be a \fag\ acting regularly on a set $X$, and $f: X \to \f_2$ a function.
Then $f$ is $G$-\pn\ if and only if $f^{-1}(1)$ is a
$G$--$(4u^2, 2u^2 \pm u, u(u \pm 1))$ \dsx, where $\abs{X} = 4u^2$.
\end{cor}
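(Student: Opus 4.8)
\textbf{Proof proposal for Corollary \ref{cor-reg}.}

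The plan is to reduce everything to the characterization of $G$-perfect nonlinear functions $f:X\to\f_2$ in terms of $G$-difference sets established in Corollary \ref{cor-dp-2-pnf}, and then pin down the exact parameters using the fact that $G$ acts regularly. First I would invoke Corollary \ref{cor-dp-2-pnf}: since $\abs{\f_2}=2$, the function $f$ is $G$-perfect nonlinear if and only if $4\mid v$ (where $v=\abs X$) and $S_1:=f^{-1}(1)$ is a $G$--$(v,k,k-v/4)$ difference set, with $k=\abs{S_1}$. So the whole content is to show that when $G$ acts regularly, the existence of such a difference set forces $v=4u^2$ and $k=2u^2\pm u$ for some integer $u$, and conversely that these are exactly the admissible parameter sets.

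The key step is the counting identity for a $G$--$(v,k,\ell)$ difference set $D$ with $\ell=k-v/4$. Applying any non-principal $\psi\in\wh G$ to the group-algebra identity of Lemma \ref{lem-dset}, namely $\sum_{(x,y)\in D\times D}G_{x,y}^+=(k-\ell)1_G+\ell G^+$, and using that $\psi(G^+)=0$, gives $\psi\bigl(\sum_{(x,y)\in D\times D}G_{x,y}^+\bigr)=k-\ell=v/4$. Because the action is regular, each $G_{x,y}$ is a singleton, so $\sum_{(x,y)\in D\times D}G_{x,y}^+$ is precisely the standard "difference" element $DD^{(-1)}$ of $\c G$ under the usual identification of $X$ with $G$; thus $D$ (viewed inside $G$) is an ordinary $(v,k,\ell)$-difference set of the abelian group $G$ with $\ell=k-v/4$. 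Now I would apply the classical counting equation $k(k-1)=\ell(v-1)$ for an ordinary difference set. Substituting $\ell=k-v/4$ yields $k(k-1)=(k-v/4)(v-1)$, which after expanding becomes $k^2-kv+v^2/4-v/4=0$, i.e. $(2k-v)^2=v$. Hence $v$ is a perfect square of an even number (since $2k-v$ is an integer and $v$ is even), say $v=4u^2$ with $\abs{2k-v}=2u$, giving $k=v/2\pm u=2u^2\pm u$ and then $\ell=k-v/4=u^2\pm u=u(u\pm1)$. This shows one direction; for the converse, one just observes that the parameters $(4u^2,2u^2\pm u,u(u\pm1))$ satisfy $\ell=k-v/4$ with $4\mid v$, so by Corollary \ref{cor-dp-2-pnf} any such difference set yields a $G$-perfect nonlinear $f$.

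The main obstacle I anticipate is handling the translation between the "$G$-set" language of the paper and the classical "difference set in a group" language cleanly: one must verify that when $G$ acts regularly on $X$, fixing a base point identifies $X$ with $G$ as a $G$-set, that each $G_{x,y}$ is a singleton, and that under this identification a $G$--$(v,k,\ell)$ difference set of $X$ is exactly an ordinary $(v,k,\ell)$-difference set of $G$ — so that the textbook identity $k(k-1)=\ell(v-1)$ is legitimately available. Once this bookkeeping is in place, the algebra $(2k-v)^2=v$ is routine. An alternative, fully self-contained route that avoids quoting the classical identity would be to compute $\delta(D,D)$ (in the notation of Eqn \eqref{eq-delta-cd}) both ways: on one hand $\delta(D,D)=k+\ell(\abs G-1)=k+(k-v/4)(v-1)$ by Lemma \ref{lem-dset}, and on the other hand, since the action is regular, $\delta(D,D)=\sum_{(x,y)\in D\times D}\abs{G_{x,y}}=\abs{D\times D}=k^2$; equating gives the same quadratic $(2k-v)^2=v$. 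I would probably present this second version since it stays entirely inside the machinery already developed in the paper.
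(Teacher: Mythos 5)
Your proposal is correct, and the "alternative, fully self-contained route" you say you would actually present — computing $\delta(S_1,S_1)$ once as $k^2$ using regularity and once as $k+(k-v/4)(v-1)$ via Lemma \ref{lem-dset}, then solving $(2k-v)^2=v$ — is precisely the paper's proof. The first route via the classical identity $k(k-1)=\ell(v-1)$ is also sound, but the bookkeeping you flag (identifying $X$ with $G$) is exactly what the $\delta$ computation lets the paper avoid.
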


\begin{proof}
Let $S_1 := f^{-1}(1)$, $v := \abs{X}$, and $k := \abs{S_1}$. 
Since $G$ acts regularly on $X$, we have $\abs{G} = 
\abs{X} = v$, and $\delta(S_1, S_1) = \abs{S_1}^2 = k^2$.
If $f$ is $G$-\pn, then  $S_1$ is a $G$--$(v, k, k - v/4)$ \dsx, and hence 
$\delta(S_1, S_1) = k + (k - v/4)(v - 1)$ by 
 Lemma \ref{lem-dset}. 
Thus, $k^2 = k + (k - v/4)(v - 1)$, and $k = (v \pm \sqrt{v})/2$.
But $4 | v$ implies that 
 $\pm \sqrt{v}/2 = k - v/2$ must be an integer. Let $u =  \sqrt{v}/2$.
Then $v = 4u^2$, $k = 2u^2 \pm u$, $k - v/4 = u(u \pm 1)$, and  
$S_1$ is a $G$--$(4u^2, 2u^2 \pm u, u(u \pm 1))$ \dset. 
The other direction is clear.
\end{proof}

\begin{re} \rm
Let $G$ be a \fag\ acting on a finite set $X$, and let $f: X \to \f_2$ be a $G$-\pnf.
If the action of $G$ on $X$ is not regular, then it is not necessarily $\abs{X} = 4u^2$
for some positive integer $u$. The next theorem gives such an example. Also see
Section \ref{sect-klein} for more such examples. 
\end{re}

\begin{thm}
\label{thm-order2}
Let $G$ be a group of order $2$ acting on a finite set $X$. Assume that the 
orbits of $G$ are $X_1, \dotsc, X_r, X_{r+1}, \dotsc, X_{r+s}$ such that the 
length of each $X_i$ is $2$ for $1 \le i \le r$, and the length of each $X_j$ is $1$ for
$r + 1 \le j \le r + s$. Then the following hold.
\begin{enumerate}
\item[$($i$)$] There is a $G$-\pnf\ $X \to \f_2$ if and only if $2r \ge s$ and 
$4 | (2r + s)$.
\item[$($ii$)$]
Assume that $2r \ge s$ and $4 | (2r + s)$. Then a function $f: X \to \f_2$ 
is $G$-\pn\ if and only if 
$$
\bigabs{ \{ i \nid 1 \le i \le r \hbox{ and } \abs{X_i \cap f^{-1}(1)} = 1 \} }
  = \frac{2r + s}{4}. 
$$
\end{enumerate} 
\end{thm}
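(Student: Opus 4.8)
The plan is to reduce everything to a counting statement about the group $G$ of order $2$, say $G = \{1_G, \alpha\}$, and then apply Theorem \ref{pnf-rdf} (or equivalently Corollary \ref{cor-dp-2-pnf}) together with the orbit structure. First I would set $S_0 := f^{-1}(0)$ and $S_1 := f^{-1}(1)$, write $v := \abs{X} = 2r + s$, and observe that since $G$ has a unique non-identity element $\alpha$, Theorem \ref{pnf-rdf} says $f$ is $G$-perfect nonlinear if and only if $\abs{\alpha S_0 \cap S_1} + \abs{\alpha S_1 \cap S_0} = v/2$ (in particular $2 \mid v$, and by Corollary \ref{cor-dp-2-pnf} in fact $4 \mid v$). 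The key reduction is that $\alpha$ fixes every point of the length-$1$ orbits $X_j$ ($r+1 \le j \le r+s$), so those orbits contribute nothing to $\alpha S_0 \cap S_1$ or $\alpha S_1 \cap S_0$: if $x \in X_j$ with $j > r$, then $\alpha x = x$, so $x$ cannot lie in both $S_0$ and $S_1$. For a length-$2$ orbit $X_i = \{x_i, \alpha x_i\}$ with $1 \le i \le r$, the contribution to $\abs{\alpha S_0 \cap S_1} + \abs{\alpha S_1 \cap S_0}$ is exactly $2$ if $f(x_i) \ne f(\alpha x_i)$ (i.e. $\abs{X_i \cap S_1} = 1$) and $0$ otherwise.

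Carrying this out: let $t := \abs{\{ i \nid 1 \le i \le r,\ \abs{X_i \cap f^{-1}(1)} = 1 \}}$. The computation in the previous paragraph gives
\begin{equation*}
\abs{\alpha S_0 \cap S_1} + \abs{\alpha S_1 \cap S_0} = 2t,
\end{equation*}
so by Theorem \ref{pnf-rdf}, $f$ is $G$-perfect nonlinear if and only if $4 \mid v$ and $2t = v/2$, i.e. $t = v/4 = (2r+s)/4$. This is exactly the condition in part (ii), proving (ii) once we know such a function can exist, i.e. once we have the constraints of (i). For (i): if a $G$-perfect nonlinear $f$ exists then by Corollary \ref{cor-dp-2-pnf} we need $4 \mid v = 2r + s$, and we need $t = (2r+s)/4$ to be achievable with $0 \le t \le r$; since $v/4 \le r$ is equivalent to $2r + s \le 4r$, i.e. $s \le 2r$, the conditions $2r \ge s$ and $4 \mid (2r+s)$ are necessary. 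Conversely, if $2r \ge s$ and $4 \mid (2r+s)$, set $t = (2r+s)/4$; then $0 \le t \le r$, so we may pick any $t$ of the length-$2$ orbits and define $f$ to take distinct values on the two points of each such orbit, and define $f$ arbitrarily on all other points; by (ii) (the "if" direction, which is just the computation above) this $f$ is $G$-perfect nonlinear.

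The main obstacle — really the only nontrivial point — is the clean bookkeeping of the orbit-by-orbit contributions to $\abs{\alpha S_0 \cap S_1} + \abs{\alpha S_1 \cap S_0}$, i.e. verifying that a length-$2$ orbit contributes $2$ exactly when $f$ is non-constant on it and $0$ otherwise, and that length-$1$ orbits contribute $0$. This follows because $X$ is the disjoint union of its orbits and each $\alpha S_j \cap S_{1-j}$ respects this decomposition (as $\alpha$ preserves each orbit), so it suffices to analyze a single orbit; on a length-$2$ orbit $\{x, \alpha x\}$ with $f(x) = a, f(\alpha x) = b$, one checks directly that $x \in \alpha S_b \cap S_a$ and $\alpha x \in \alpha S_a \cap S_b$, giving contribution $2$ iff $a \ne b$. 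After this, the arithmetic bounding $t$ between $0$ and $r$ and the divisibility condition $4 \mid (2r+s)$ are routine. One should also note at the start that we use Corollary \ref{cor-dp-2-pnf} (rather than just Theorem \ref{pnf-rdf}) to get the sharper necessary condition $4 \mid v$ instead of merely $2 \mid v$.
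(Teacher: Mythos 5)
Your proof is correct, but it takes a genuinely different route from the paper's. The paper proves this theorem with the Fourier-analytic machinery of Sections 3--4: it chooses a \sgd\ $\wh X$ with $\wh X_{\psi_1}=\{\eta_1,\dotsc,\eta_r\}$, each $\eta_j$ supported on a length-$2$ orbit with values $\pm\sqrt{(2r+s)/2}$, computes $\sum_{\eta\in\wh X_{\psi_1}}\lvert\wh{(\xi\circ f)}(\eta)\rvert^2 = 2p(2r+s)$ where $p$ is the number of length-$2$ orbits on which $f$ is non-constant, and then invokes Theorem \ref{thm-pnf-bent} to get the condition $2p(2r+s)=\abs{X}^2/\abs{G}$, i.e.\ $p=(2r+s)/4\le r$. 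You instead stay entirely within the combinatorial framework of Section 2: Theorem \ref{pnf-rdf} reduces $G$-perfect nonlinearity to $\abs{\alpha S_0\cap S_1}+\abs{\alpha S_1\cap S_0}=v/2$ for the unique non-identity $\alpha$, and you evaluate the left side orbit by orbit (length-$1$ orbits contribute $0$; a length-$2$ orbit contributes $2$ exactly when $f$ is non-constant on it), arriving at the same count $t=(2r+s)/4$. Both arguments are sound and of comparable length. Your approach is more elementary -- it needs no dual set, no Fourier transform, and not even the standing abelian hypothesis of Section 4 (vacuous here anyway) -- whereas the paper's approach serves as a warm-up for the Klein-group computations of Sections 5--6, where the same $\wh X_{\psi}$-by-$\wh X_{\psi}$ bookkeeping is reused. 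One small remark: your appeal to Corollary \ref{cor-dp-2-pnf} for the condition $4\mid v$ is not actually needed, since your own identity $2t=v/2$ already forces $v=4t$.
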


\begin{proof}
Let $\wh G = \{ \psi_0, \psi_1 \}$, where $\psi_0$ is the principal \ic.  Let
$\wh X = \{ \l_i \nid 1 \le i \le r +s \} \cup \{ \eta_j \nid 1 \le j \le r \}$ be a 
\sgd\ of $X$ such that $\wh X_{\psi_1} =  \{ \eta_j \nid 1 \le j \le r \}$. 
Then for $1 \le j \le r$,
$\eta_j$ can be chosen such that the support of $\eta_j$ is $X_j$, and
$\eta_j(x_j) = \sqrt{(2r+s)/2},  \eta_j(y_j) = -\sqrt{(2r+s)/2}$, where 
$\{x_j, y_j \} = X_j$. Let
$f: X \to \f_2$ be a function, and $\xi$ the non-principal \ic\ of $\f_2$. 
Then 
$$
\wh{(\xi \circ f)}(\eta_j) = \xi(f(x_j)) \eta_j(x_j) + \xi(f(y_j)) \eta_j(y_j) =
\begin{cases}
\pm 2 \sqrt{(2r+s)/2}, & \hbox{if } \abs{X_j \cap f^{-1}(1)} = 1; \\
0, & \hbox{otherwise}.
\end{cases}
$$
Thus,
$$
\sum_{\eta \in \wh X_{\psi_1}} \lrabs{\wh{(\xi \circ f)}(\eta)}^2
=\sum_{j = 1}^r \lrabs{\wh{(\xi \circ f)}(\eta_j)}^2 = 2p(2r+s),
$$
where $p = \bigabs{ \{ j \nid 1 \le j \le r \hbox{ and } \abs{X_j \cap f^{-1}(1)} = 1 \} }$.
So by Theorem \ref{thm-pnf-bent}, $f$ is $G$-\pn\ if and only if $p = (2r + s)/4 \le r$,
and the theorem holds.
\end{proof}

When $s = 2r$ in the above theorem, we have the following 

\begin{cor}
{\rm (Cf. \cite[Theorem 4.4]{dp})}
Let $m$ be a positive integer, and let $X$ and $Y$ be two sets such that $\abs{X} = 
\abs{Y} = 2m$ and $X \cap Y = \emptyset$. Let $\pi$ be a permutation on $X \cup Y$
such that $\pi(x) = x$ for all $x \in X$, $\pi(y) \ne y$, and $\pi(\pi(y)) = y$, 
 for all $y \in Y$. Then there is a $\lrg{\pi}$-\pnf\ $f: X \cup Y \to \f_2$. Furthermore,
$f$ can be chosen to be evenly-balanced and hence $f^{-1}(1)$ is a
 $\lrg{\pi}$--$(4m, 2m, m)$ \dset\ of $X \cup Y$.
\end{cor}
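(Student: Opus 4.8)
The plan is to apply Theorem~\ref{thm-order2} directly with the special value $s = 2r$. First I would observe that the hypotheses of that corollary describe a group $\lrg{\pi}$ of order $2$ (since $\pi$ fixes every point of $X$ and acts as a fixed-point-free involution on $Y$, so $\pi^2 = \mathrm{id}$) acting on the set $X \cup Y$. The orbits of length $2$ are exactly the pairs $\{y, \pi(y)\}$ inside $Y$, of which there are $r = m$ (because $\abs{Y} = 2m$), and the orbits of length $1$ are the $s = 2m$ fixed points in $X$. Thus we are in the situation of Theorem~\ref{thm-order2} with $r = m$ and $s = 2m$, so $2r + s = 4m$.

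Next I would check the existence condition from Theorem~\ref{thm-order2}(i): we need $2r \ge s$, i.e. $2m \ge 2m$, which holds (with equality), and $4 \mid (2r+s)$, i.e. $4 \mid 4m$, which is automatic. Hence a $\lrg{\pi}$-perfect nonlinear function $f: X \cup Y \to \f_2$ exists. To get the additional properties, I would invoke Theorem~\ref{thm-order2}(ii): $f$ is $G$-perfect nonlinear iff the number of length-$2$ orbits $X_i$ with $\abs{X_i \cap f^{-1}(1)} = 1$ equals $(2r+s)/4 = m$; since there are exactly $r = m$ such orbits, this forces $\abs{X_i \cap f^{-1}(1)} = 1$ for \emph{every} length-$2$ orbit, i.e. $f$ takes the value $1$ on exactly one of each pair $\{y, \pi(y)\}$. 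This means $\abs{Y \cap f^{-1}(1)} = m$. One can then freely prescribe $f$ on the $2m$ fixed points of $X$; choosing $f$ to take the value $1$ on exactly $m$ of them makes $\abs{f^{-1}(1)} = 2m$, which together with $\abs{f^{-1}(0)} = 2m$ says $f$ is evenly-balanced.

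Finally, to conclude that $f^{-1}(1)$ is a $\lrg{\pi}$--$(4m, 2m, m)$ difference set, I would apply Corollary~\ref{cor-dp-2-pnf}: since $f$ is $G$-perfect nonlinear with $v = \abs{X \cup Y} = 4m$, the set $S_1 = f^{-1}(1)$ is a $G$--$(v, k_1, k_1 - v/4)$ difference set with $k_1 = \abs{f^{-1}(1)} = 2m$ and $v/4 = m$, giving parameters $(4m, 2m, m)$ exactly.

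I do not expect any serious obstacle here: the statement is essentially a direct specialization of Theorem~\ref{thm-order2}, and the only points requiring a word of care are (a) verifying that $\lrg{\pi}$ really acts on $X \cup Y$ with the claimed orbit structure, and (b) noting that the construction of $f$ on the fixed points of $X$ is a free choice that can be made to produce an evenly-balanced function. Both are routine. The mild subtlety worth flagging is that $2r \ge s$ holds only with equality, which is precisely what forces $f$ to be determined on all the size-$2$ orbits and thus makes $\abs{Y \cap f^{-1}(1)} = m$ — this is the mechanism behind the "furthermore" clause.
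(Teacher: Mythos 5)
Your proposal is correct and is exactly the argument the paper intends: the corollary is stated without proof as the case $s = 2r$ of Theorem \ref{thm-order2}, and your specialization (with $r = m$, $s = 2m$, the equality $2r = s$ forcing every length-$2$ orbit to meet $f^{-1}(1)$ in exactly one point, the free choice on the fixed points to achieve balance, and Corollary \ref{cor-dp-2-pnf} for the $(4m,2m,m)$ parameters) fills in precisely the details the authors left implicit.
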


\section{\large Perfect nonlinear functions on actions of Klein group 
\label{sect-klein} }

In this section we assume that $G := \{1_G,\alpha,\beta,\gamma\}$ 
is the Klein group, and
 study necessary and sufficient conditions under which there 
exist \gpnf s on $G$-sets. Constructions of \gpnf s are also presented. 
Note that 
$
\alpha^2=\beta^2=\gamma^2 = 1_G,~
            \alpha\beta=\gamma,~ \beta\gamma=\alpha,~
            \gamma\alpha=\beta.
$
The dual group $\wh G :=\{\psi _0, \psi_1,\psi_2,\psi_3\}$
is given by Table \ref{t-char}.
\begin{table}[h]
\begin{center}
$\begin{array}{|c|c c c c|}\hline
 & 1 & \alpha & \beta & \gamma \\ \hline
\psi_0 & 1 & 1 & 1 & 1\\ 
\psi_1 & 1 & 1 &-1&-1\\
\psi_2 & 1 & -1 & 1 & -1 \\ 
\psi_3 & 1 & -1 & -1 & 1 \\ \hline
\end{array}$
\caption{Character Table of the Klein Four Group \label{t-char}}\end{center}
\end{table}
\begin{nota} \rm
\label{nota-setup}
Let $G$ be the Klein four group acting on a finite set $X$ with orbits
$$
X_1, \dotsc, X_s, \ \ X_{\al, 1}, \dotsc, X_{\al, p}, \ \ X_{\beta, 1}, \dotsc, X_{\beta, q}, \
\ X_{\g, 1}, \dotsc, X_{\g, r}, \ \ X_{01}, \dotsc, X_{0t}
$$
such that
\begin{eqnarray*}
&  \abs{X_1} = \dotsb =  \abs{X_s} = 4, \ \ \abs{X_{01}} = \dotsb = \abs{X_{0t}} = 1, \\
&  \abs{X_{\al, 1}} = \dotsb = \abs{X_{\al, p}} =
\abs{X_{\beta, 1}} = \dotsb = \abs{X_{\beta, q}} =
\abs{X_{\g, 1}} = \dotsb = \abs{X_{\g, r}} = 2,  
\end{eqnarray*}
and $\al$ fixes every point in $X_{\al, 1}, \dotsc, X_{\al, p}$,  
$\beta$ fixes every point in $X_{\beta, 1}, \dotsc, X_{\beta, q}$, 
$\g$ fixes every point in $X_{\g, 1}, \dotsc, X_{\g, r}$. However, we do not assume that
$p, q, r, s, t$ are all nonzero. If $s > 0$, then for $1 \le i \le s$, assume that
$X_i := \{ x_{i1}, x_{i2}, x_{i3}, x_{i4} \}$, $\al(x_{i1}) = x_{i2}$, 
$\al(x_{i3}) = x_{i4}$, $\beta(x_{i1}) = x_{i3}$, $\beta(x_{i2}) = x_{i4}$, and
$\g(x_{i1}) = x_{i4}$, $\g(x_{i2}) = x_{i3}$.
\end{nota}

With the notation in Notation \ref{nota-setup}, we can choose a \sgd\ $\wh X$ of $X$ 
such that 
\begin{equation}
\label{eq-4-xhat}
\begin{split}
 \wh X_{\psi_1} := \{ \l_{11}, \dotsc, \l_{1s}, & \, \eta_{11}, \dotsc, \eta_{1p} \}, \ \
\wh X_{\psi_2} := \{ \l_{21}, \dotsc, \l_{2s}, \eta_{21}, \dotsc, \eta_{2q} \}, \\
& \wh X_{\psi_3} := \{ \l_{31}, \dotsc, \l_{3s}, \eta_{31}, \dotsc, \eta_{3r} \}, 
\end{split}
\end{equation}
where $\l_{ji}$ and $\eta_{jl}$ are defined as follows. For $1 \le i \le s$, 
the supports of $\l_{1i}$, $\l_{2i}$, $\l_{3i}$ are $X_i$, and 
$\l_{1i}$, $\l_{2i}$, $\l_{3i}$ are given by Table \ref{table-2}.
Furthermore, the support of $\eta_{1l}$ is $X_{\al, l} := \{ y_{l1}, y_{l2} \}$, and 
$\eta_{1l}(y_{l1}) = - \eta_{1l}(y_{l2}) = \sqrt{\abs{X}/2}$, $1 \le l \le p$; 
the support of $\eta_{2l}$ is $X_{\beta, l} := \{ z_{l1}, z_{l2} \}$, and 
$\eta_{2l}(z_{l1}) = - \eta_{2l}(z_{l2}) = \sqrt{\abs{X}/2}$, $1 \le l \le q$; and
the support of $\eta_{3l}$ is $X_{\g, l} := \{ w_{l1}, w_{l2} \}$, and 
$\eta_{3l}(w_{l1}) = - \eta_{3l}(w_{l2}) = \sqrt{\abs{X}/2}$, $1 \le l \le r$.
\begin{table}[h]
\begin{center}
$\begin{array}{|c|c c c c|}\hline
 & x_{i1} & x_{i2} & x_{i3} & x_{i4} \\ \hline
\l_{1i} & \sqrt{\abs{X}/4} \ & \sqrt{\abs{X}/4} & -\sqrt{\abs{X}/4} & -\sqrt{\abs{X}/4} 
       \rule{0pt}{14pt}\\
\l_{2i} & \sqrt{\abs{X}/4} & -\sqrt{\abs{X}/4} \ & \sqrt{\abs{X}/4} & -\sqrt{\abs{X}/4} \\ 
\l_{3i} & \sqrt{\abs{X}/4} & -\sqrt{\abs{X}/4} & -\sqrt{\abs{X}/4} \ & \sqrt{\abs{X}/4} \\ \hline
\end{array}$
\caption{Some Elements in $\wh X$ \label{table-2}}\end{center}
\end{table}

\begin{defn} \rm
With the notation in Notation \ref{nota-setup}, let $s > 0$, and
$f: X \to \f_2$ be a function. Then for $1 \le i \le s$ and $1 \le j \le 3$,
we say that $f$ is \it{$\psi_j$-split on} $X_i$ if 
$\abs{f^{-1}(1) \cap X_i} = 2$ and 
$\l_{ji}(f^{-1}(1) \cap X_i) = \{ \sqrt{\abs{X}/4}\, \}$ or $\{ -\sqrt{\abs{X}/4}\, \}$.
\end{defn}

The next lemma is straightforward. 

\begin{lemma}
\label{lem-4-fhat}
With the notation in Notation \ref{nota-setup}, let $\wh X$ be the \sgd\ of $X$ given by
Eqn \eqref{eq-4-xhat}, $f: X \to \f_2$ a function, and $\xi$ the non-principal \ic\ of $\f_2$.
Then the following hold.
\begin{enumerate}
\item[$($i$)$] For $1 \le i \le s$,
$$
\bigabs{\wh{(\xi \circ f)}(\l_{ji})}^2 =  
\begin{cases}
0, & \hbox{if $f$ is constant on } X_i; \\
\abs{X}, & \hbox{if } \bigabs{f^{-1}(1) \cap X_i} = 1 \hbox{ or } 3,
\end{cases}
\quad 1 \le j \le 3,
$$
and 
\begin{eqnarray*}
& \bigabs{\wh{(\xi \circ f)}(\l_{1i})}^2 = 4 \abs{X}, \
\wh{(\xi \circ f)}(\l_{2i}) = \wh{(\xi \circ f)}(\l_{3i}) = 0, 
\quad \hbox{if $f$ is $\psi_1$-split on } X_i, \\
& \bigabs{\wh{(\xi \circ f)}(\l_{2i})}^2 = 4 \abs{X}, \
\wh{(\xi \circ f)}(\l_{1i}) = \wh{(\xi \circ f)}(\l_{3i}) = 0,   
\quad \hbox{if $f$ is $\psi_2$-split on } X_i, \\
& \bigabs{\wh{(\xi \circ f)}(\l_{3i})}^2 = 4 \abs{X}, \
\wh{(\xi \circ f)}(\l_{1i}) = \wh{(\xi \circ f)}(\l_{2i}) = 0, 
\quad \hbox{if $f$ is $\psi_3$-split on } X_i.
\end{eqnarray*}

\item[$($ii$)$] 
\begin{eqnarray*}
& \bigabs{\wh{(\xi \circ f)}(\eta_{1l})}^2 =  
\begin{cases}
0, & \hbox{if } \bigabs{f^{-1}(1) \cap X_{\al, l}} = 0 \hbox{ or } 2; \\
2\abs{X}, & \hbox{if } \bigabs{f^{-1}(1) \cap X_{\al,l}} = 1,
\end{cases} \quad 1 \le l \le p, \\
& \bigabs{\wh{(\xi \circ f)}(\eta_{2l})}^2 =  
\begin{cases}
0, & \hbox{if } \bigabs{f^{-1}(1) \cap X_{\beta, l}} = 0 \hbox{ or } 2; \\
2\abs{X}, & \hbox{if } \bigabs{f^{-1}(1) \cap X_{\beta, l}} = 1,
\end{cases}  \quad 1 \le l \le q, \\
& \bigabs{\wh{(\xi \circ f)}(\eta_{3l})}^2 =  
\begin{cases}
0, & \hbox{if } \bigabs{f^{-1}(1) \cap X_{\g, l}} = 0 \hbox{ or } 2; \\
2\abs{X}, & \hbox{if } \bigabs{f^{-1}(1) \cap X_{\g, l}} = 1,
\end{cases} \quad 1 \le l \le r. 
\end{eqnarray*}
\end{enumerate}
\end{lemma}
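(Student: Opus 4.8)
The plan is a direct evaluation of each \four\ from its definition, exploiting that $\xi\circ f$ takes only the values $\pm1$. Write $\xi(f(x))=(-1)^{f(x)}$ for $x\in X$, so that $\xi(f(x))=1$ exactly when $x\notin f^{-1}(1)$ and $\xi(f(x))=-1$ exactly when $x\in f^{-1}(1)$; in particular, for any orbit, the number of points on which $\xi\circ f$ equals $-1$ is the cardinality of its intersection with $f^{-1}(1)$. Both parts then reduce to reading off finitely many values of $\l_{ji}$ and $\eta_{jl}$ from Eqn \eqref{eq-4-xhat} and Table \ref{table-2} and summing.

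For part (i), fix $i$ with $1\le i\le s$. Since $\l_{ji}$ is supported on $X_i=\{x_{i1},x_{i2},x_{i3},x_{i4}\}$, we have $\wh{(\xi\circ f)}(\l_{ji})=\sum_{k=1}^4\xi(f(x_{ik}))\,\l_{ji}(x_{ik})$, a four-term sum whose coefficients come from Table \ref{table-2}. Writing $\epsilon_k:=\xi(f(x_{ik}))\in\{1,-1\}$ and $c:=\sqrt{\abs{X}/4}$, this gives $\wh{(\xi\circ f)}(\l_{1i})=c(\epsilon_1+\epsilon_2-\epsilon_3-\epsilon_4)$, $\wh{(\xi\circ f)}(\l_{2i})=c(\epsilon_1-\epsilon_2+\epsilon_3-\epsilon_4)$, and $\wh{(\xi\circ f)}(\l_{3i})=c(\epsilon_1-\epsilon_2-\epsilon_3+\epsilon_4)$. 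I would then run through the possible values of $\abs{f^{-1}(1)\cap X_i}$: if it is $0$ or $4$ (i.e. $f$ is constant on $X_i$) all three sums vanish; if it is $1$ or $3$ each sum is $\pm2c$, so each squared absolute value equals $4c^2=\abs{X}$; and if it is $2$, the six possible $2$-subsets of $X_i$ split into the three pairs $\{x_{i1},x_{i2}\},\{x_{i3},x_{i4}\}$; $\{x_{i1},x_{i3}\},\{x_{i2},x_{i4}\}$; $\{x_{i1},x_{i4}\},\{x_{i2},x_{i3}\}$, on which respectively only $\l_{1i}$, only $\l_{2i}$, only $\l_{3i}$ is nonzero with value $\pm4c$, giving squared absolute value $16c^2=4\abs{X}$ while the other two vanish. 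This last point is also exactly where the value-based definition of ``$\psi_j$-split'' is matched to the combinatorial case: for $f^{-1}(1)\cap X_i\in\{\{x_{i1},x_{i2}\},\{x_{i3},x_{i4}\}\}$ the value set $\l_{1i}(f^{-1}(1)\cap X_i)$ is $\{c\}$ or $\{-c\}$, and similarly for $j=2,3$.

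For part (ii), fix a two-element orbit, say $X_{\al,l}=\{y_{l1},y_{l2}\}$. Since $\eta_{1l}$ is supported on $X_{\al,l}$ with $\eta_{1l}(y_{l1})=-\eta_{1l}(y_{l2})=\sqrt{\abs{X}/2}$, the definition of the \four\ gives $\wh{(\xi\circ f)}(\eta_{1l})=\sqrt{\abs{X}/2}\,\bigl(\xi(f(y_{l1}))-\xi(f(y_{l2}))\bigr)$. If $\abs{f^{-1}(1)\cap X_{\al,l}}\in\{0,2\}$ the two $\xi$-values coincide and this sum is $0$; if $\abs{f^{-1}(1)\cap X_{\al,l}}=1$ the two values differ by $\pm2$, so the squared absolute value is $4\cdot\abs{X}/2=2\abs{X}$. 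The computations for $\eta_{2l}$ and $\eta_{3l}$ are identical, with $X_{\beta,l}$ and $X_{\g,l}$ in place of $X_{\al,l}$.

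There is no genuine obstacle here; as the statement says, the lemma is a bookkeeping exercise in the explicit $\abs{X}$-normal orthogonal basis of Eqn \eqref{eq-4-xhat}. The only point demanding a little care is aligning the combinatorial case $\abs{f^{-1}(1)\cap X_i}=2$ in part (i) with the value-based definition of $\psi_j$-split, and that is dispatched by listing, for each $j$, the two relevant $2$-subsets and reading their $\l_{ji}$-values off Table \ref{table-2} as above.
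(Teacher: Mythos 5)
Your proof is correct. The paper gives no proof at all---it simply declares the lemma straightforward---and your direct case-by-case evaluation of the four-term (resp.\ two-term) sums read off from Table \ref{table-2} and Eqn \eqref{eq-4-xhat}, including the matching of the three partitions of $X_i$ into $2$-subsets with the three $\psi_j$-split cases, is exactly the routine verification the authors intend.
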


With the notation in Notation \ref{nota-setup}, for a function $f: X \to \f_2$,
 let
\begin{equation}
\label{eq-mu-al}
\begin{split}
\mu_\al := \bigabs{\{ X_{\al, i} \nid \abs{f^{-1}(1) \cap X_{\al, i}} = 1, 1 \le i \le p \} }, 
\\
\mu_\beta := \bigabs{\{ X_{\beta, i} \nid \abs{f^{-1}(1) \cap X_{\beta, i}} = 
1, 1 \le i \le q \} }, \\
\mu_\g := \bigabs{\{ X_{\g, i} \nid \abs{f^{-1}(1) \cap X_{\g, i}} = 1, 1 \le i \le r \} }.
\end{split}
\end{equation}
Note that $\mu_\al = 0$ if $p = 0$, $\mu_\beta = 0$ if $q = 0$, and $\mu_\g = 0$ if
$r = 0$.

\begin{thm}
\label{thm-pnf}
With the notation in Notation \ref{nota-setup} and Eqn \eqref{eq-mu-al}, 
if $s = 0$, then the following hold.
\begin{enumerate}
\item[$($i$)$] There exists a $G$-\pnf\ from $X$ to $\f_2$ if and only if $8$ divides 
$\abs{X}$ and $\min \{ p, q, r \} \ge \abs{X} /8.$

\item[$($ii$)$] A function $f: X \to \f_2$ is $G$-\pn\ if and only if 
$\mu_\al = \mu_\beta = \mu_\g = \abs{X} /8. $
\end{enumerate}
\end{thm}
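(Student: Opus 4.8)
The plan is to apply Theorem~\ref{thm-pnf-bent} together with the explicit computation of the Fourier coefficients in Lemma~\ref{lem-4-fhat}. Since $s = 0$, the \sgd\ $\wh X$ given by Eqn~\eqref{eq-4-xhat} reduces to $\wh X_{\psi_1} = \{ \eta_{11}, \dotsc, \eta_{1p} \}$, $\wh X_{\psi_2} = \{ \eta_{21}, \dotsc, \eta_{2q} \}$, $\wh X_{\psi_3} = \{ \eta_{31}, \dotsc, \eta_{3r} \}$, so the only contributions come from the length-$2$ orbits. For a function $f: X \to \f_2$ and the non-principal \ic\ $\xi$ of $\f_2$, Lemma~\ref{lem-4-fhat}(ii) gives $\sum_{\eta \in \wh X_{\psi_1}} \bigabs{\wh{(\xi \circ f)}(\eta)}^2 = 2 \abs{X} \mu_\al$, and similarly the sums over $\wh X_{\psi_2}$ and $\wh X_{\psi_3}$ equal $2 \abs{X} \mu_\beta$ and $2 \abs{X} \mu_\g$ respectively.

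For part~(ii): by Theorem~\ref{thm-pnf-bent}, $f$ is $G$-\pn\ if and only if for each non-principal $\psi_j \in \wh G$ ($j = 1, 2, 3$) we have $\sum_{\l \in \wh X_{\psi_j}} \bigabs{\wh{(\xi \circ f)}(\l)}^2 = \abs{X}^2 / \abs{G} = \abs{X}^2 / 4$. Substituting the three sums computed above, this is exactly $2 \abs{X} \mu_\al = 2 \abs{X} \mu_\beta = 2 \abs{X} \mu_\g = \abs{X}^2/4$, i.e. $\mu_\al = \mu_\beta = \mu_\g = \abs{X}/8$, as claimed. (Note $\abs{X} \ne 0$, so dividing through is legitimate.)

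For part~(i): if a $G$-\pnf\ exists, then by part~(ii) there is an $f$ with $\mu_\al = \mu_\beta = \mu_\g = \abs{X}/8$; since $\mu_\al, \mu_\beta, \mu_\g$ are nonnegative integers, $8 \mid \abs{X}$, and since $\mu_\al \le p$, $\mu_\beta \le q$, $\mu_\g \le r$ (each $\mu$ counts a subcollection of the corresponding orbits), we get $\min\{p, q, r\} \ge \abs{X}/8$. Conversely, if $8 \mid \abs{X}$ and $\min\{p,q,r\} \ge \abs{X}/8$, then since $\abs{X} = 2p + 2q + 2r + t$ one checks $\abs{X}/8 \le p, q, r$, so we may pick subsets of $\abs{X}/8$ of the $X_{\al, i}$'s, of the $X_{\beta, i}$'s, and of the $X_{\g, i}$'s, define $f$ to take the value $1$ on exactly one point of each chosen orbit and $0$ elsewhere (in particular $f \equiv 0$ on all the fixed points $X_{0l}$ and on all unchosen length-$2$ orbits). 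This $f$ achieves $\mu_\al = \mu_\beta = \mu_\g = \abs{X}/8$ and hence is $G$-\pn\ by part~(ii).

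The only mild subtlety — and the step I would be most careful about — is the consistency check in the converse of~(i): one must confirm that $8 \mid \abs{X}$ together with $\min\{p,q,r\} \ge \abs{X}/8$ really does leave enough room to make the three independent choices, which is immediate here because the three families of orbits ($\al$-fixed, $\beta$-fixed, $\g$-fixed) are pairwise disjoint, so the choices do not interfere. Everything else is a direct substitution into Theorem~\ref{thm-pnf-bent} using Lemma~\ref{lem-4-fhat}, so I expect no real obstacle.
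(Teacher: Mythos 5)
Your proposal is correct and follows essentially the same route as the paper: reduce to Theorem~\ref{thm-pnf-bent} via the Fourier coefficient computations of Lemma~\ref{lem-4-fhat}, observe that with $s=0$ the sums over $\wh X_{\psi_j}$ are $2\abs{X}\mu_\al$, $2\abs{X}\mu_\beta$, $2\abs{X}\mu_\g$, and derive both parts from the resulting condition $\mu_\al=\mu_\beta=\mu_\g=\abs{X}/8$. Your explicit construction of $f$ in the converse of (i) merely spells out what the paper leaves as ``it is clear that there exists a function $f$.''
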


\begin{proof}
Let $f: X \to \f_2$ be a function, and $\xi$ the non-principal \ic\ of $\f_2$. 
Let $\wh X$ be the \sgd\ of $X$ given by Eqn \eqref{eq-4-xhat}.
Then by Lemma \ref{lem-4-fhat},
$$
\sum_{\eta_{1i} \in \wh{X}_{\psi_1}} \bigabs{\wh{(\xi \circ f)}(\eta_{1i})}^2 = 
 2 \abs{X} \mu_\al, \ \
\sum_{\eta_{2i} \in \wh{X}_{\psi_2}} \bigabs{\wh{(\xi \circ f)}(\eta_{2i})}^2 =
 2 \abs{X} \mu_\beta, \ \
\sum_{\eta_{3i} \in \wh{X}_{\psi_3}} \bigabs{\wh{(\xi \circ f)}(\eta_{3i})}^2 = 
2 \abs{X} \mu_\g.
$$
Since $s = 0$, Theorem  \ref{thm-pnf-bent} and Eqn \eqref{eq-4-xhat} imply that 
$f$ is $G$-\pn\ if and only if $\mu_\al = \mu_\beta = \mu_\g = \abs{X}/8$.
So (ii) holds.

Now we prove (i). First assume that there is a $G$-\pnf\ $f: X \to \f_2$. Then by (ii), 
$8$ divides $\abs{X}$. Furthermore, since $p \ge \mu_\al$, 
$q \ge \mu_\beta$, and $r \ge \mu_\g $, it also follows from (ii) that $\min\{p, q, r\} 
\ge \abs{X}/8$. On the other hand, if $8$ divides $\abs{X}$ and $\min\{p, q, r\} 
\ge \abs{X}/8$, then it is clear that there exists a function $f: X \to \f_2$ such that
$\mu_\al = \mu_\beta = \mu_\g = \abs{X}/8$. Hence, $f$ is $G$-\pn\ by (ii), and
(i) holds. 
\end{proof}

From the proof of Theorem \ref{thm-pnf}, we have the following corollary by 
Theorem \ref{thm-set-char}.

\begin{cor}
With the notation in Notation \ref{nota-setup} and Eqn \eqref{eq-mu-al}, let 
$f: X \to \f_2$ be a function. If $s = 0$, then $f^{-1}(1)$ is a $G$-\dsx\ if and only
if $\mu_\al = \mu_\beta = \mu_\g$.
\end{cor}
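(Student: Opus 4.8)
The plan is to read off the statement as a special case of the machinery already developed for $G$-difference sets, exactly as the sentence ``we have the following corollary by Theorem \ref{thm-set-char}'' suggests. When $s = 0$, the orbit decomposition in Notation \ref{nota-setup} has no orbits of length $4$, so the \sgd\ given by Eqn \eqref{eq-4-xhat} reduces to $\wh X_{\psi_1} = \{\eta_{11},\dots,\eta_{1p}\}$, $\wh X_{\psi_2} = \{\eta_{21},\dots,\eta_{2q}\}$, $\wh X_{\psi_3} = \{\eta_{31},\dots,\eta_{3r}\}$ (together with $\wh X_{\psi_0}$, which plays no role in the difference-set criterion). The first step is simply to invoke Theorem \ref{thm-set-char}: with $D := f^{-1}(1)$ and $k := \abs{D}$, the set $D$ is a $G$--$(v,k,\ell)$ \dsx\ for some $\ell$ if and only if the quantity $\sum_{\l \in \wh X_\psi} \bigabs{\l(D)^+}^2$ takes the \emph{same} value $\frac{\abs X}{\abs G}(k-\ell)$ for every non-principal $\psi \in \wh G$; here $\abs G = 4$.

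**Next I would** evaluate $\sum_{\l \in \wh X_{\psi_j}} \bigabs{\l(D)^+}^2$ for $j = 1,2,3$. But this is already done in the proof of Theorem \ref{thm-pnf}: taking $\xi$ the non-principal \ic\ of $\f_2$ one has $\wh{(\xi\circ f)}(\eta) = \sum_{x} \xi(f(x))\eta(x)$, and on a $2$-element support $X_{\al,i} = \{y_{i1},y_{i2}\}$ with $\eta_{1i}(y_{i1}) = -\eta_{1i}(y_{i2}) = \sqrt{\abs X/2}$, the two characters $\xi$ and $\eta_{1i}$ interact so that $\bigabs{\wh{(\xi\circ f)}(\eta_{1i})}^2$ equals $2\abs X$ when $\abs{f^{-1}(1)\cap X_{\al,i}} = 1$ and $0$ otherwise (Lemma \ref{lem-4-fhat}(ii)). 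Since $\l(D)^+ = \l(f^{-1}(1))^+$ differs from $\wh{(\xi\circ f)}(\l)$ only in replacing $\xi(f(x))$ by $f(x)\in\{0,1\}$ — equivalently $\l(D)^+ = \sum_{x\in D}\l(x)$ and on a $2$-element support this is again $\pm\sqrt{\abs X/2}$ or $0$ according as $\abs{D\cap X_{\al,i}}$ is $1$ or not — the same computation gives $\bigabs{\eta_{1i}(D)^+}^2 \in \{\abs X/2,\, 0\}$ with the value $\abs X/2$ occurring exactly for those $i$ counted by $\mu_\al$. Summing, $\sum_{\l\in\wh X_{\psi_1}}\bigabs{\l(D)^+}^2 = \tfrac{\abs X}{2}\mu_\al$, and likewise $\sum_{\l\in\wh X_{\psi_2}}\bigabs{\l(D)^+}^2 = \tfrac{\abs X}{2}\mu_\beta$ and $\sum_{\l\in\wh X_{\psi_3}}\bigabs{\l(D)^+}^2 = \tfrac{\abs X}{2}\mu_\g$. (I should double-check the normalization: Theorem \ref{thm-pnf}'s proof carries the extra factor coming from $\bigabs{\xi(\cdot)} = 1$, so I will state the exact constant carefully, but the key point is that it is a fixed positive multiple of $\mu_\al,\mu_\beta,\mu_\g$ respectively.)

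**Then the conclusion is immediate:** by Theorem \ref{thm-set-char}, $D = f^{-1}(1)$ is a $G$-\dsx\ if and only if these three sums over $\wh X_{\psi_1},\wh X_{\psi_2},\wh X_{\psi_3}$ are all equal, which — after cancelling the common positive constant — is precisely the condition $\mu_\al = \mu_\beta = \mu_\g$. Note that no divisibility hypothesis is needed: unlike the \pnf\ criterion in Theorem \ref{thm-pnf}, which forces each common value to equal $\abs X^2/\abs G$ and hence $\mu_\al = \mu_\beta = \mu_\g = \abs X/8$, the \dset\ criterion only requires the three values to \emph{agree}, with the common value $\frac{\abs X}{\abs G}(k-\ell)$ then determining $\ell$ from $k$.

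**The main obstacle** is purely bookkeeping: matching the normalization constant in Lemma \ref{lem-4-fhat}(ii) (where the $2$-element supports use $\sqrt{\abs X/2}$, contributing $2\abs X$ via $\wh{(\xi\circ f)}$) against the constant $\frac{\abs X}{\abs G} = \frac{\abs X}{4}$ appearing on the right-hand side of Eqn \eqref{eq-l-set}, and confirming that $\bigabs{\l(D)^+}^2$ for $D = f^{-1}(1)$ genuinely coincides with the ``$\abs{f^{-1}(1)\cap X_{\al,i}} = 1$ versus not'' dichotomy rather than depending on \emph{which} of the two points lies in $D$. Both are routine — the second holds because $\eta_{1i}$ takes values $\pm\sqrt{\abs X/2}$ on the two points, so a single point of $X_{\al,i}$ in $D$ contributes $\pm\sqrt{\abs X/2}$ whose square is independent of the sign, while zero or two points contribute $0$. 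Once these constants are pinned down, the corollary follows from Theorem \ref{thm-set-char} in one line, which is why the paper relegates it to a corollary rather than a theorem.
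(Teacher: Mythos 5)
Your proposal is correct and follows essentially the same route as the paper: the paper derives this corollary by combining the orbit-by-orbit Fourier computation from the proof of Theorem \ref{thm-pnf} (equivalently, Lemma \ref{lem-4-fhat}(ii), noting that for non-principal $\psi$ one has $\wh{(\xi\circ f)}(\l) = -2\,\l(f^{-1}(1))^+$ since $\l(X)^+=0$, which accounts for your factor-of-$4$ normalization discrepancy) with the difference-set criterion of Theorem \ref{thm-set-char}. Your observation that only \emph{equality} of the three sums is needed, not a prescribed common value, is exactly the point that distinguishes this corollary from Theorem \ref{thm-pnf}(ii).
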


In general when $s \ne 0$, the classification of \gpnf s from $X$ to $\f_2$ is much
more complicated. The next example gives the classification for $\abs{X} = 16$.

\begin{ex} \rm
With the notation in Notation \ref{nota-setup}, let $f: X \to \f_2$ be a \gpnf.
Assume that $\abs{X} = 16$. Then one of the following hold.
\begin{enumerate}
\item[$($i$)$] $s = 4$, and either the restriction of $f$ to each $X_i$ is \gpn\
or by renumbering $X_i$ if necessary, $f$ is constant on
$X_4$ and $\psi_i$-split on $X_i$, $1 \le i \le 3$. 

\item[$($ii$)$] $s = 3$, and by renumbering $X_i$ if necessary, $f$ is 
 $\psi_i$-split on $X_i$, $1 \le i \le 3$, and $f$ is constant on other orbits of $G$.

\item[$($iii$)$] $s = 3$, $p = 2$,  
and by renumbering $X_i$ if necessary, $f$ is constant on
$X_1$ and $\psi_i$-split on $X_i$ for $i = 2, 3$, and $\mu_\al = 2$.

Similar results for cases $s = 3$, $q = 2$ and $s = 3$, $r = 2$ also hold.
 
\item[$($iv$)$] $s = 2$, and the restrictions of $f$ to $X_1$, $X_2$, and
$X \setminus (X_1 \cup X_2)$ are all \gpn.

\item[$($v$)$] $s = 2$, $q = r = 2$,  
and by renumbering $X_1, X_2$ if necessary, $f$ is $\psi_1$-split on $X_1$
and constant on $X_2$, and $\mu_\beta = \mu_\g = 2$. 

Similar results for cases $s = 2$, $p = q = 2$ and $s = 2$, $p = r = 2$ also hold.

\item[$($vi$)$] $s = 2$, $p \ge 2$,  
and by renumbering $X_1, X_2$ if necessary, $f$ is $\psi_2$-split on $X_1$
and $\psi_3$-split on $X_2$, and $\mu_\al = 2$, $\mu_\beta = \mu_\g = 0$. 

Similar results for cases $s = 2$, $q \ge 2$ and $s = 2$, $r \ge 2$ also hold.

\item[$($vii$)$] $s = 1$, $\min \{ p, q \} \ge 2$, $f$ is $\psi_3$-split on
$X_1$, $\mu_\al = \mu_\beta = 2$, and $\mu_\g = 0$.

Similar results for cases $s = 1$, $\min \{ p, r \} \ge 2$ and $s = 1$, 
$\min \{ q, r \} \ge 2$ also hold.

\item[$($viii$)$] $s = 1$, $p = q = r = 2$, $f$ is constant on $X_1$, and 
$\mu_\al = \mu_\beta = \mu_\g = 2$.

\item[$($ix$)$] $s = 0$, $\min \{ p, q, r \} \ge 2$, and $\mu_\al = \mu_\beta = \mu_\g = 2$.
\end{enumerate}
\end{ex}

Let $G$ be a \fg\ acting on a finite set $X$. A nonempty subset $Y$ of $X$ is said to
be \it{$G$-stable} if $Y$ is the union of orbits of $G$. The next lemma is 
straightforward.

\begin{lemma}
\label{lem-union}
Let $G$ be a \fg\ acting on a finite set $X$, and let $Y, Z$ be $G$-stable subsets
of $X$ such that $Y \cup Z = X$. Let $H$ be a \fg, and $f: X \to H$ a function. Then
the following hold.
\begin{enumerate}
\item[$($i$)$] If the restrictions of $f$ to both $Y$ and $Z$ are \gpn, then $f$ is 
\gpn.

\item[$($ii$)$] If $f$ and the restriction of $f$ to $Y$ are \gpn, then 
the restrictions of $f$ to $Z$ is also \gpn.
\end{enumerate} 
\end{lemma}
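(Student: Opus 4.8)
The plan is to reduce both parts to the definition of a $G$-perfect nonlinear function (Eqn~\eqref{pnf}) via one elementary observation: if $W \subseteq X$ is $G$-stable and $x \in W$, then $\alpha x \in W$ for every $\alpha \in G$, so the value $f(\alpha x)f(x)^{-1}$ depends only on $f|_W$; that is, $(f|_W)'_\alpha(x) = f'_\alpha(x)$ for all $x \in W$, and therefore
$$
\big((f|_W)'_\alpha\big)^{-1}(\sigma) = {f'_\alpha}^{-1}(\sigma) \cap W, \qquad \forall\,\alpha \in G,\ \forall\,\sigma \in H.
$$
First I would record this identity for $W = Y$ and for $W = Z$.

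Next, since $Y \cup Z = X$ and (as in every application of this lemma) $Y \cap Z = \emptyset$, for each $\alpha \in \gi$ and $\sigma \in H$ the set ${f'_\alpha}^{-1}(\sigma)$ is the disjoint union of its traces on $Y$ and on $Z$, whence
$$
\bigabs{{f'_\alpha}^{-1}(\sigma)} = \bigabs{\big((f|_Y)'_\alpha\big)^{-1}(\sigma)} + \bigabs{\big((f|_Z)'_\alpha\big)^{-1}(\sigma)}.
$$
For (i): if $f|_Y$ and $f|_Z$ are $G$-perfect nonlinear, then $|H|$ divides both $|Y|$ and $|Z|$, hence $|H|$ divides $|X|$, and the identity above yields $\bigabs{{f'_\alpha}^{-1}(\sigma)} = |Y|/|H| + |Z|/|H| = |X|/|H|$ for all $\alpha \in \gi$ and $\sigma \in H$, so $f$ is $G$-perfect nonlinear. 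For (ii): if $f$ and $f|_Y$ are $G$-perfect nonlinear, then $|H|$ divides $|X|$ and $|Y|$, hence $|H|$ divides $|Z| = |X| - |Y|$, and solving the identity for the $Z$-term gives $\bigabs{\big((f|_Z)'_\alpha\big)^{-1}(\sigma)} = |X|/|H| - |Y|/|H| = |Z|/|H|$, so $f|_Z$ is $G$-perfect nonlinear.

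There is no real obstacle here; the only things to watch are the divisibility bookkeeping --- in each part one must actually deduce $|H|\mid|X|$, $|H|\mid|Y|$, $|H|\mid|Z|$ rather than take them for granted --- and the fact that the trace decomposition of ${f'_\alpha}^{-1}(\sigma)$ is a \emph{disjoint} union, which is what makes the cardinalities add; this is automatic in the situations where the lemma is used because $Y$ and $Z$ are unions of distinct $G$-orbits. Alternatively one could run the same additivity argument through Theorem~\ref{pnf-rdf}, since $\sum_{h}\bigabs{\alpha S_h \cap S_{\sigma h}}$ splits as a sum over $Y$ and over $Z$; but the direct computation with derivatives is shorter.
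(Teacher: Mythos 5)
Your proof is correct and is precisely the argument the paper intends: the paper gives no proof at all (it simply declares the lemma ``straightforward''), and your reduction via the identity $\big((f|_W)'_\alpha\big)^{-1}(\sigma) = {f'_\alpha}^{-1}(\sigma) \cap W$ for $G$-stable $W$, together with additivity of cardinalities over $Y$ and $Z$ and the divisibility bookkeeping for $|H|$, is exactly the computation being waved at. You are also right to insist on $Y \cap Z = \emptyset$, which the statement omits but which is genuinely necessary --- part (ii) fails outright if one takes $Y = X$ and $Z$ a proper $G$-stable subset --- and which does hold in every application of the lemma in the paper, where $Y$ and $Z$ are unions of distinct orbits.
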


\begin{thm}
\label{thm-pnf-gen}
With the notation in Notation \ref{nota-setup}, there exists a \gpnf\ from $X$ to $\f_2$
if and only if one of the following hold:
\begin{enumerate}
\item[$($i$)$] There exists a \gpnf\ from $X \setminus (X_1 \cup \dotsb \cup X_s)$
to $\f_2$;  or
\item[$($ii$)$] $4| v_0$, where $v_0 = 2p + 2q + 2r + t$, and
there are nonnegative integers $k_0, p_1, q_1, r_1$ such that $p_1 \le p$,
$q_1 \le q$, $r_1 \le r$,  and
\begin{eqnarray*}
&  & p_1 \equiv q_1 \equiv r_1 \mod 2, \quad
s \ge \frac{3 v_0}{4} + 4 k_0 - 2(p_1 + q_1 + r_1), \\
 & & \frac{v_0}{4} + k_0 \ge \max \Big\{ p_1 + \frac{q_1 + r_1}{2}, 
q_1 + \frac{p_1 + r_1}{2},  r_1 + \frac{p_1 + q_1}{2} \Big\}.
\end{eqnarray*}
\end{enumerate}
\end{thm}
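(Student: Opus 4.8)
The plan is to reduce the existence question for a $G$-perfect nonlinear function $f:X\to\f_2$ to a counting problem on the orbit structure, using Theorem \ref{thm-pnf-bent} together with the explicit Fourier values computed in Lemma \ref{lem-4-fhat}. First I would decompose $X$ as the disjoint union of the $G$-stable sets $Y:=X_1\cup\dotsb\cup X_s$ (the regular part) and $Z:=X\setminus Y$ (where $\abs Z=v_0=2p+2q+2r+t$). By Lemma \ref{lem-union}, restriction and gluing of $G$-perfect nonlinear functions along this partition behave well; in particular if $f$ restricted to $Z$ is already $G$-perfect nonlinear then we are in case (i), and conversely a $G$-perfect nonlinear $f$ on $Z$ extends (by making $f$ constant on each $X_i$, which contributes nothing to any $\psi_j$-sum) to all of $X$. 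So case (i) is precisely the situation where we can ``solve the problem on $Z$ alone,'' and the content of case (ii) is the genuinely new phenomenon where $Y$ is needed.

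For case (ii) I would write down, using Theorem \ref{thm-pnf-bent} and Eqn \eqref{eq-4-xhat}, the three bentness equations: for each $j\in\{1,2,3\}$,
$$
\sum_{i=1}^s\bigabs{\wh{(\xi\circ f)}(\l_{ji})}^2+\sum_{l}\bigabs{\wh{(\xi\circ f)}(\eta_{jl})}^2=\frac{\abs X^2}{4}=\frac{(4s+v_0)^2}{4}.
$$
By Lemma \ref{lem-4-fhat} the $\eta$-contribution to the $\psi_1$-equation is $2\abs X\mu_\al$, to the $\psi_2$-equation is $2\abs X\mu_\beta$, and to the $\psi_3$-equation is $2\abs X\mu_\g$. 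For the $\l_{ji}$-terms: on each orbit $X_i$ the function $f$ is either constant (contributes $0$ to all three), has $\abs{f^{-1}(1)\cap X_i}\in\{1,3\}$ (contributes $\abs X$ to each of the three $\psi_j$-sums), or is $\psi_j$-split for exactly one $j$ (contributes $4\abs X$ to that one sum and $0$ to the other two). Let $a$ be the number of $X_i$ of the ``$\{1,3\}$'' type, and $b_1,b_2,b_3$ the numbers of $\psi_1$-, $\psi_2$-, $\psi_3$-split orbits; then $a+b_1+b_2+b_3\le s$, and the three equations become
$$
\abs X\,(a+4b_j)+2\abs X\,\mu_j=\frac{\abs X^2}{4}\quad(j=1,2,3),
$$
writing $\mu_1:=\mu_\al$, etc. Dividing by $\abs X=4s+v_0$ gives $a+4b_j+2\mu_j=(4s+v_0)/4=s+v_0/4$ for each $j$; in particular $4\mid v_0$ is forced, and $b_j+ (\mu_j/2)$ is determined, and $b_1\equiv b_2\equiv b_3\pmod 2$ since they differ from one another only through the even quantities $2(\mu_{j'}-\mu_j)$. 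Subtracting shows $2b_j+\mu_j$ is the same for all $j$; call it $c:=(s-a)/1$-type quantity — more precisely $a+4b_j+2\mu_j$ being constant gives $2b_j+\mu_j$ constant.

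The remaining work is bookkeeping: set $p_1:=\mu_\al,\,q_1:=\mu_\beta,\,r_1:=\mu_\g$ (these satisfy $p_1\le p$, $q_1\le q$, $r_1\le r$ automatically), introduce $k_0$ to absorb the ``type $\{1,3\}$'' orbits via $a=4k_0$ — actually I expect the paper sets $4k_0$ or similar so that $a$ plus twice-counted split orbits matches — and then translate $a+b_1+b_2+b_3\le s$ together with $a+4b_j+2\mu_j=s+v_0/4$ into the stated inequalities. Solving $b_j=(s+v_0/4-a-2\mu_j)/4$ and plugging into $a+\sum_j b_j\le s$ yields, after clearing denominators, $s\ge \tfrac{3v_0}{4}+4k_0-2(p_1+q_1+r_1)$ (with $a$ renamed in terms of $k_0$), while requiring each $b_j\ge 0$ gives $v_0/4+k_0\ge \mu_j+\tfrac12(\mu_{j'}+\mu_{j''})$, i.e. the three displayed $\max$ inequalities; and $b_1\equiv b_2\equiv b_3\pmod2$ is exactly $p_1\equiv q_1\equiv r_1\pmod 2$. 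Conversely, given such $k_0,p_1,q_1,r_1$, one constructs $f$ explicitly: choose the prescribed numbers of $\psi_j$-split orbits and $\{1,3\}$-type orbits among $X_1,\dotsc,X_s$ (possible since the total is $\le s$), make $f$ constant on the rest, and pick $f$ on the $2$-element orbits so that exactly $p_1,q_1,r_1$ of the $\al$-, $\beta$-, $\g$-orbits have $\abs{f^{-1}(1)\cap\cdot}=1$; Lemma \ref{lem-4-fhat} then makes the three bentness equations hold, so $f$ is $G$-perfect nonlinear by Theorem \ref{thm-pnf-bent}.

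The main obstacle is purely organizational rather than conceptual: getting the exact correspondence between the ``internal'' counts $a,b_1,b_2,b_3$ on the regular orbits and the paper's parameters $k_0,p_1,q_1,r_1$ right, including the precise role of the $\{1,3\}$-type orbits (each contributing $\abs X$ uniformly to all three sums, hence behaving like a ``common shift'') versus the split orbits (each contributing $4\abs X$ to a single sum). One must be careful that the constant-on-$X_i$ orbits are genuinely free — they let us pad $s$ upward — which is why the condition is an inequality $s\ge\cdots$ rather than an equality, and must also check the boundary case $s=0$ recovers Theorem \ref{thm-pnf}(i) (then necessarily $a=b_j=0$, forcing $p_1=q_1=r_1=v_0/8=\abs X/8$, consistent with $k_0=0$). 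I would also double-check the parity constraint is not vacuous and that case (i) and case (ii) are not mutually exclusive but jointly exhaustive, invoking Lemma \ref{lem-union}(ii) to see that whenever (ii) fails, any $G$-perfect nonlinear $f$ must already be $G$-perfect nonlinear on $Z$, landing us in (i).
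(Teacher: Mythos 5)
Your overall strategy is the right one and is essentially the paper's: combine Theorem \ref{thm-pnf-bent} with the orbit-by-orbit Fourier values of Lemma \ref{lem-4-fhat}, classify each length-$4$ orbit as constant, $\psi_j$-split, or of type $\abs{f^{-1}(1)\cap X_i}\in\{1,3\}$, and turn the three bentness equations into linear conditions on the counts. Your global bookkeeping (working with $a,b_1,b_2,b_3$ and the number $c$ of constant orbits on all of $X$ at once) even avoids the paper's minimality argument, which instead uses Lemma \ref{lem-union}(ii) to strip off the orbits where $f$ restricts to a $G$-perfect nonlinear function before counting. However, two concrete steps are wrong as written.

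First, the parenthetical claim that a $G$-perfect nonlinear $f$ on $Z=X\setminus(X_1\cup\dotsb\cup X_s)$ extends to $X$ ``by making $f$ constant on each $X_i$'' fails: constant pieces do contribute $0$ to each $\psi_j$-sum, but the target value of that sum changes from $\abs{Z}^2/4$ to $\abs{X}^2/4$, so the constant extension is not $G$-perfect nonlinear once $s>0$. The extension that Lemma \ref{lem-union}(i) actually licenses makes $f$ restricted to each $X_i$ itself $G$-perfect nonlinear, i.e. $\abs{f^{-1}(1)\cap X_i}\in\{1,3\}$, each such orbit then contributing $\abs{X}$ to all three sums.

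Second, and more importantly, the proposed dictionary $a=4k_0$ is not the right one, and with it you do not arrive at the stated condition (ii). In the theorem $k_0$ is the number of orbits on which $f$ is constant (your $c$), not a rescaling of the number $a$ of $\{1,3\}$-type orbits. Eliminating correctly: from $a+4b_j+2\mu_j=s+v_0/4$ and $a+c+b_1+b_2+b_3=s$ one gets $b_j=v_0/4+c-\mu_j-\tfrac{1}{2}(\mu_{j'}+\mu_{j''})$ and $a=s-\big(3v_0/4+4c-2(\mu_1+\mu_2+\mu_3)\big)$; so the theorem's inequality $s\ge 3v_0/4+4k_0-2(p_1+q_1+r_1)$ is exactly $a\ge 0$, and the displayed $\max$ inequality is exactly $b_j\ge 0$, with $k_0=c$. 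Your inequality $s\ge a+3v_0/4-2\sum_j\mu_j$, obtained from $a+\sum_j b_j\le s$, is instead the condition $c\ge 0$ in the other parametrization: a true necessary condition, but not the one in the statement, and the two existential claims only coincide after the correct change of variables. Fix the dictionary ($k_0:=c$) and both directions go through as you outline.
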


\begin{proof}
First assume that there exists a \gpnf\ from $X$ to $\f_2$. Then we prove that (i) or (ii) holds.
If (i) holds, there is nothing to prove. Assume that (i) does not hold. Then there is a minimal
number of subsets among $X_1, \dotsc, X_s$, say $X_1, \dotsc, X_{s_1}$, such that 
there exists a \gpnf\ $f: X_0 \cup X_1 \cup \dotsb \cup X_{s_1} \to \f_2$, 
where $X_0 = X \setminus (X_1 \cup \dotsb \cup X_s)$, and $1 \le s_1 \le s$.
Then by Lemma \ref{lem-union}, the choice of the minimal number of the
subsets $X_i$ implies that the restriction of $f$ to any $X_i$ is not \gpn, $1 \le i \le s_1$.
Thus, for any $1 \le i \le s_1$, $f$ is either constant or $\psi_j$-split on $X_i$ for some 
 $1 \le j \le 3$.

Let $Y = X_0 \cup X_1 \cup \dotsb \cup X_{s_1}$, and $\wh Y$ a \sgd\ of $Y$ given by
Eqn \eqref{eq-4-xhat}. Among the subsets $X_1, \dotsc, X_{s_1}$, assume that there are $k_0$
subsets on which $f$ is constant, and $k_i$ subsets on which $f$ is  $\psi_i$-split, 
$1 \le i \le 3$. Also let $p_1 := \mu_\al$, $q_1 := \mu_\beta$, and $r_1 := \mu_\g$,
where $\mu_\al, \mu_\beta, \mu_\g$ are defined by Eqn \eqref{eq-mu-al}. 
Then by Lemma \ref{lem-4-fhat} and Theorem \ref{thm-pnf-bent},
$$
\frac{\abs{Y}^2}{4} = \sum_{\l \in \wh Y_{\psi_1}} \bigabs{ \wh{(\xi \circ f)}(\l)}^2
= (4 k_1 + 2p_1) \abs{Y}, 
$$
where $\xi$ is the non-principal \ic\ of $\f_2$. Thus, $4 k_1 + 2p_1 = \abs{Y}/4$. 
Similarly, $4 k_2 + 2q_1 = 4 k_3 + 2r_1 = \abs{Y}/4$. Thus, $4(k_1 + k_2 + k_3)
+ 2(p_1 + q_1 + r_1) = 3\abs{Y}/4$. Since $\abs{Y} = 4 s_1 + v_0$
and $s_1 = k_0 + k_1 + k_2 +k_3$, we get that 
\begin{equation}
\label{eq-s1}
s_1 = \frac{3 v_0}{4} + 4 k_0 - 2(p_1 + q_1 + r_1).
\end{equation}
Also it follows from $4 k_1 + 2p_1 = 4 k_2 + 2q_1 = 4 k_3 + 2r_1 = \abs{Y}/4$ that
$p_1 \equiv q_1  \equiv r_1 \mod 2$. Furthermore, $\abs{Y} = 4 s_1 + v_0 = 4 v_0 +
16 k_0 - 8 (p_1 + q_1 + r_1)$ by Eqn \eqref{eq-s1}. So $4 k_1 + 2p_1 = \abs{Y}/4$
yields that 
$$
k_1 = \frac{v_0}{4} + k_0 - p_1 - \frac{q_1 + r_1}{2} \ge 0.
$$
Similarly, $k_2 = v_0/4 + k_0 - q_1 - (p_1 + r_1)/2 \ge 0$, and
$k_3 = v_0/4 + k_0 - r_1 - (p_1 + q_1)/2 \ge 0$. So (ii) holds.

On the other hand, if (i) holds, then clearly there is a \gpnf\ from $X$ to $\f_2$
by Lemma \ref{lem-union}. Now assume
that (ii) holds. Let $k_1 =  v_0/4 + k_0 - p_1 - (q_1 + r_1)/2$, 
$k_2 =  v_0/4 + k_0 - q_1 - (p_1 + r_1)/2$, and 
$k_3 =  v_0/4 + k_0 - r_1 - (p_1 + q_1)/2$. Then $k_1$, $k_2$, and $k_3$ are
nonnegative integers, and
$$
k_0 + k_1 + k_2 + k_3 = \frac{3v_0}{4} + 4 k_0 - 2(p_1 + q_1 + r_1) \le s.
$$
Let $f: X \to \f_2$ be a function that satisfies the following conditions: $\mu_\al = p_1$,
$\mu_\beta = q_1$, $\mu_\g = r_1$;  among the subsets $X_1, \dotsc, X_{s_1}$, 
where $s_1 := k_0 + k_1 + k_2 + k_3$,  $f$ is constant on $k_0$ subsets and 
$\psi_i$-split on $k_i$ subsets, $1 \le i \le 3$, and $\bigabs{f^{-1}(1) \cap X_j}
= 1$ or $3$ for $s_1 < j \le s$.  Let $Y = X_0 \cup X_1 \cup \dotsb \cup X_{s_1}$,
where $X_0 = X \setminus (X_1 \cup \dotsb \cup X_s)$. Then by Lemma \ref{lem-4-fhat} 
and Theorem \ref{thm-pnf-bent}, it is easy to show that the restriction of $f$ to $Y$
is \gpn. Since the restriction of $f$ to each $X_j$ is \gpn, $s_1 < j \le s$,
$f$ is a \gpnf\ from $X$ to $\f_2$ by Lemma \ref{lem-union}.
\end{proof}

\section{\large Bent functions on actions of Klein group
\label{sect-klein-b} }

In this section we study \gbf s for the Klein group $G$. Our main result
is Theorem \ref{thm-klein4} below. We will prove this theorem by constructing the
desired \gbf s. Note that if there is a \gpnf\ from $X$ to a \fag, then 
there exists a \gbf\ on $X$ by Theorem \ref{thm-pnf-bent}. However, in this section 
we will construct the \gbf s directly. These functions cannot be obtained from
the \gpnf s constructed in the previous section. 

\begin{nota} \rm
\label{nota-orbit}
Let $G$ be the Klein four group acting on a finite set $X$ with orbits
$$
 X_{\al, 1}, \dotsc, X_{\al, p}, \ \ X_{\beta, 1}, \dotsc, X_{\beta, q}, \
\ X_{\g, 1}, \dotsc, X_{\g, r}
$$
such that the length of each orbit is $2$, 
 $\al$ fixes every point in $X_{\al, 1}, \dotsc, X_{\al, p}$,  
$\beta$ fixes every point in $X_{\beta, 1}, \dotsc, X_{\beta, q}$, and
$\g$ fixes every point in $X_{\g, 1}, \dotsc, X_{\g, r}$. Also let $X_{\al, i} = \{x_i, 
y_i\}$, $1 \le i \le p$,  $X_{\beta, i} = \{x_{p+i}, y_{p+i}\}$, $1 \le i \le q$, and
$X_{\g, i} = \{x_{p+q+i}, y_{p+q+i}\}$, $1 \le i \le r$.
However, we do not assume that $p, q, r$ are all nonzero. 
\end{nota}

\begin{thm}
\label{thm-klein4}
With the notation in Notation \ref{nota-orbit},
 there exist (infinitely many) bent functions on $X$ if and only if 
\begin{equation}
\label{eq-g-lin}
p+q+r \le 4 \min \{ p, q, r \}.
\end{equation}
\end{thm}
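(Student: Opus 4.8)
The plan is to use Theorem \ref{thm-bent}, which says that $f : X \to T$ is $G$-bent if and only if $f'_\alpha$ is balanced for each of the three non-identity elements $\alpha \in \{\al, \beta, \g\}$ of the Klein group. Since every orbit has length $2$, on an orbit $X_{\al,i} = \{x_i, y_i\}$ fixed by $\al$ we have $\al x_i = x_i$, so $f'_\al(x_i) = f'_\al(y_i) = 1$; thus $f'_\al$ contributes $+1$ at both points of each $\al$-orbit. On a $\beta$-orbit $\{x_j, y_j\}$ (with $\beta x_j = y_j$) we have $f'_\al(x_j) = f(\al x_j) f(x_j)^{-1} = f(\g x_j) f(x_j)^{-1}$ and $f'_\al(y_j) = f(\g y_j) f(y_j)^{-1}$, and similarly for $\g$-orbits. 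So the balance condition $\sum_{x \in X} f'_\al(x) = 0$ becomes: twice the number of $\al$-orbits, plus a sum of unit complex numbers over the points of the $\beta$- and $\g$-orbits, equals zero. Writing $f(x_i) = e^{2\pi i a_i}$ etc., each pair of points of a $\beta$-orbit or $\g$-orbit contributes a value on the unit circle (in fact a pair of conjugate-ish terms) that we are free to choose. The upshot is that $f'_\al$ being balanced amounts to writing $-2p$ (the contribution of the $p$ fixed orbits) as a sum of $2(q+r)$ unit vectors, one constrained pair per $\beta$- or $\g$-orbit; and symmetrically for $f'_\beta$ and $f'_\g$.

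First I would carry out this reduction carefully and reformulate it as a purely combinatorial/geometric feasibility question: we must assign to each orbit a unit complex number (or a small tuple of them) so that three weighted sums vanish simultaneously, where the "$\al$-equation" has coefficient sum forced to include a block of size $2p$ contributing a fixed total, etc. The cleanest route is probably to let each orbit carry a single phase and observe that the three derivative-balance equations are, up to relabeling, $2p + (\text{sum over } \beta\text{- and } \g\text{-orbits}) = 0$ and its cyclic analogues; by the triangle inequality, a sum of $N$ unit vectors can equal a target of modulus $M$ only if $M \le N$, which here gives $2p \le 2(q+r)$, i.e. $p \le q + r$, and cyclically $q \le p + r$, $r \le p + q$. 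These three inequalities are exactly equivalent to $p + q + r \le 4\min\{p,q,r\}$: indeed $p \le q+r$ rearranges to $2p \le p+q+r$, i.e. $p \le (p+q+r)/2$, and... wait, that gives the factor $2$, not $4$ — so I expect the actual constraint coming out of the bookkeeping to be $2p \le q + r$ (each of the $q + r$ non-$\al$ orbits contributing a net of at most $2$ in modulus toward cancelling $2p$, or a sharper per-orbit bound), yielding $4p \le 2(q+r)$... I will need to track the constants precisely; the correct necessary inequality must reduce to $3p \le q+r$... no — $p+q+r \le 4\min$ is equivalent to $p + q + r \le 4p$, $\le 4q$, $\le 4r$, i.e. $q + r \le 3p$ and cyclically. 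So the per-orbit accounting must show each of the $q + r$ orbits not fixed by $\al$ can contribute net magnitude at most... something forcing $q + r \le 3p$. This is the delicate point and I would resolve it by computing $f'_\al$ restricted to a single $\beta$-orbit explicitly: there $f'_\al(x_j) + f'_\al(y_j) = f(\g x_j)f(x_j)^{-1} + f(\g y_j)f(y_j)^{-1}$, and since $\{x_j, y_j, \g x_j = \beta x_j\text{-image}\dots\}$ interrelate, this pair-sum is not free but lies on a constrained curve whose reach is bounded.

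For the converse (constructing bent functions when the inequality holds), I would exhibit an explicit $f$: use the slack in the inequalities to distribute phases so that all three balance sums telescope to zero, e.g. by pairing up orbits in groups of four (this is where the "$4\min$" enters: roughly $\min\{p,q,r\}$ can absorb the other two parameters four-to-one), assigning roots of unity on each group so that each $f'_\alpha$ sums to zero orbit-block by orbit-block. The "infinitely many" assertion follows because once one valid assignment exists we can perturb the phases of an orbit on which all three derivatives already vanish locally (or compose with a nonconstant $X \to T$ whose derivatives are identically $1$, coming from fixed points — but here there are none, so instead multiply $f$ by a global character-like factor), producing a continuum of solutions.

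\textbf{Main obstacle.} The crux is pinning down the exact constant in the feasibility inequality: showing that the three simultaneous balance conditions force precisely $p + q + r \le 4\min\{p,q,r\}$ and no weaker/stronger bound. This requires understanding exactly how much "cancelling power" a single length-$2$ orbit (not fixed by the given $\alpha$) lends to the $f'_\alpha$-balance sum — it is a constrained two-term sum of unit vectors, not an unconstrained one — and then assembling the three coupled constraints. The construction direction is then a matter of checking that this necessary condition is also sufficient by an explicit block-wise assignment of roots of unity, which I expect to be routine once the counting is set up correctly.
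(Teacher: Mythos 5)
Your overall strategy (replacing the Fourier-coefficient computation by the balancedness of the derivatives $f'_\al$, $f'_\beta$, $f'_\g$ via Theorem \ref{thm-bent}) is viable and genuinely different in flavor from the paper's proof, which works directly with the sums $\sum_{\l\in\wh X_{\psi_j}}|\wh f(\l)|^2$. But as written there is a real gap at exactly the point you flag yourself: you never pin down what a single non-fixed orbit contributes to the balance sum, and you cycle through three mutually inconsistent candidate inequalities ($p\le q+r$, $2p\le q+r$, $3p\le q+r$) without resolving which one is forced. The resolution is concrete: on a $\beta$-orbit $\{x,y\}$ one has $\al x=\g x=y$, so $f'_\al(x)+f'_\al(y)=f(y)\ol{f(x)}+f(x)\ol{f(y)}=2\Re\big(f(x)\ol{f(y)}\big)=:2t$, a \emph{real} number in $[-2,2]$ --- not a pair of freely chosen unit vectors. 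Writing $T_A,T_B,T_C$ for the sums of these $t$'s over the $\al$-, $\beta$-, $\g$-orbits respectively, the three balance conditions become the linear system $p+T_B+T_C=0$, $q+T_A+T_C=0$, $r+T_A+T_B=0$, whose unique solution is $T_A=(p-q-r)/2$ and its cyclic analogues; the feasibility constraints $|T_A|\le p$, $|T_B|\le q$, $|T_C|\le r$ then yield exactly $q+r\le 3p$, $p+r\le 3q$, $p+q\le 3r$, i.e.\ Eqn \eqref{eq-g-lin}. Applying a triangle inequality to each balance equation separately, as you propose, cannot produce the right constant because it ignores the coupling between the three equations; this is why your bookkeeping kept landing on the wrong bound. (The paper's Fourier computation arrives at essentially the same system, with $\ol{c_i}d_i+c_i\ol{d_i}$ playing the role of $2t$.)

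The sufficiency direction is also only gestured at. The paper proves it by partitioning the orbits into $\min\{p,q,r\}$ families of three or four orbits, each containing one orbit of each of the three types, and invoking explicit bent functions on $6$- and $8$-point $G$-sets (Lemmas \ref{lem-bent-6} and \ref{lem-bent-8}) together with the disjoint-union Lemma \ref{lem-extend}; in your language this amounts to realizing the required values of $T_A,T_B,T_C$ by choosing each $t$ from $\{1,-1/2,-1\}$ appropriately, which the inequalities guarantee is possible, but this needs to be exhibited rather than asserted. Finally, your mechanism for producing infinitely many bent functions is not quite right as stated: the totals $T_A,T_B,T_C$ are rigid, so one cannot freely perturb the phase at a single point; the genuine freedom comes from rotating the pair $(f(x),f(y))$ on an orbit while keeping $\Re\big(f(x)\ol{f(y)}\big)$ fixed, which is what the paper's one-parameter families exploit.
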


In order to prove Theorem \ref{thm-klein4}, we need a few lemmas first. In
the following we always assume that $G$ is the Klein four group whose dual group
$\wh G$ is given by Table \ref{t-char}.
With the notation in Notation \ref{nota-orbit}, we can choose a \sgd\ $\wh X$ of $X$ 
such that 
\begin{equation}
\label{eq-4-xhat-2}
\begin{split}
& \wh X_{\psi_0} := \{ \l_{11}, \dotsc, \l_{1p},  \l_{21}, \dotsc, \l_{2q},
 \l_{31}, \dotsc, \l_{3r} \}, \\
 \wh X_{\psi_1} :=  \{ & \, \eta_{11}, \dotsc, \eta_{1p} \}, 
\wh X_{\psi_2} := \{  \eta_{21}, \dotsc, \eta_{2q} \}, 
 \wh X_{\psi_3} := \{ \eta_{31}, \dotsc, \eta_{3r} \}, 
\end{split}
\end{equation}
where $\l_{ij}$ and $\eta_{ij}$ are defined as follows. 
The supports of $\l_{1i}$ and $\eta_{1i}$ are $X_{\al, i}$, 
$\l_{1i}(x_i) = \l_{1i}(y_i) = \sqrt{\abs{X}/2}$, and 
$\eta_{1i}(x_i) = - \eta_{1i}(y_i) = \sqrt{\abs{X}/2}$, $1 \le i \le p$; 
the supports of  $\l_{2i}$ and $\eta_{2i}$ are $X_{\beta, i}$, 
$\l_{2i}(x_{p+i}) = \l_{2i}(y_{p+i}) = \sqrt{\abs{X}/2}$, and 
$\eta_{2i}(x_{p+i}) = - \eta_{2i}(y_{p+i}) = \sqrt{\abs{X}/2}$, $1 \le i \le q$; and
the supports of  $\l_{3i}$ and $\eta_{3i}$ are $X_{\g, i}$, 
$\l_{3i}(x_{p+q+i}) = \l_{3i}(y_{p+q+i}) = \sqrt{\abs{X}/2}$, and
$\eta_{3i}(x_{p+q+i}) = - \eta_{3i}(y_{p+q+i}) = \sqrt{\abs{X}/2}$, $1 \le i \le r$.

The next lemma generalizes \cite[Example 6.3]{fx}.

\begin{lemma}
\label{lem-bent-6}
With the notation in Notation \ref{nota-orbit}, if $\abs{X} = 6$ and $p = q = r = 1$, 
 then there are infinitely many \gbf s on $X$.
\end{lemma}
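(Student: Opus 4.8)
### Proof Plan for Lemma \ref{lem-bent-6}

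The plan is to exhibit an explicit infinite family of bent functions $f: X \to T$ directly, and verify the bent condition via Theorem \ref{thm-bent}, i.e.\ by showing that every nontrivial derivative $f'_\al$ is balanced. Since $\abs{X} = 6$ with $p = q = r = 1$, the $G$-set $X$ consists of exactly three orbits $X_{\al, 1} = \{x_1, y_1\}$, $X_{\beta, 1} = \{x_2, y_2\}$, $X_{\g, 1} = \{x_3, y_3\}$, where $\al$ fixes $x_1, y_1$ and swaps the other two pairs, $\beta$ fixes $x_2, y_2$ and swaps the other two pairs, and $\g$ fixes $x_3, y_3$ and swaps the remaining two pairs. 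So the problem reduces to a concrete combinatorial check on a $6$-element set.

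First I would parametrize candidate functions. A natural ansatz is to assign to $f$ values on the unit circle $T$ of the form $f(x_i) = a_i$, $f(y_i) = b_i$ for $i = 1, 2, 3$, with the $a_i, b_i \in T$ to be chosen. For a fixed nontrivial $\al \in G$, the derivative $f'_\al(z) = f(\al z) f(z)^{-1}$ takes, on the two points of the orbit fixed by $\al$, the constant value $1$ (since $\al$ fixes those points), and on each swapped orbit $\{u, w\}$ it takes the two values $f(w)f(u)^{-1}$ and $f(u)f(w)^{-1}$, which are inverses of each other on $T$. Thus for $\al$, the multiset of values of $f'_\al$ over $X$ is $\{1, 1, \zeta, \zeta^{-1}, \omega, \omega^{-1}\}$ for suitable $\zeta, \omega \in T$ coming from the two swapped orbits; for $f'_\al$ to be balanced we need $2 + \zeta + \zeta^{-1} + \omega + \omega^{-1} = 0$, i.e.\ $1 + \cos\theta + \cos\phi = 0$ where $\zeta = e^{i\theta}$, $\omega = e^{i\phi}$. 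Writing down the analogous equation for each of the three nontrivial elements $\al, \beta, \g$ gives three real equations; I would then show this system has infinitely many solutions (e.g.\ a one-parameter family), by choosing the ``swap ratios'' on the three orbits from a suitable curve, which yields infinitely many bent functions. Concretely, if $\zeta_1, \zeta_2, \zeta_3$ denote the swap ratios $b_i/a_i$ on the three orbits (up to the inversion symmetry), then the three balancedness conditions become $\cos\theta_j + \cos\theta_k = -1$ for each of the three pairs $\{j,k\}$ of orbits, forcing $\cos\theta_1 = \cos\theta_2 = \cos\theta_3 = -\tfrac12$, so $\zeta_i \in \{e^{\pm 2\pi i/3}\}$; the remaining freedom is in the overall phases $a_i$, which are unconstrained, giving a genuine infinite family.

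Alternatively, and perhaps more in the spirit of the surrounding sections, I would use the Fourier-transform criterion \eqref{def-bent'} together with the explicit \sgd\ $\wh X$ in \eqref{eq-4-xhat-2}: with $\wh X_{\psi_1} = \{\eta_{11}\}$, $\wh X_{\psi_2} = \{\eta_{21}\}$, $\wh X_{\psi_3} = \{\eta_{31}\}$, the bent condition says precisely $\abs{\wh f(\eta_{j1})}^2 = \abs{X}^2/\abs{G} = 36/4 = 9$ for $j = 1, 2, 3$. Since $\eta_{11}(x_1) = -\eta_{11}(y_1) = \sqrt{3}$ and $\eta_{11}$ vanishes off $X_{\al,1}$, we get $\wh f(\eta_{11}) = \sqrt{3}\,(f(x_1) - f(y_1))$, so $\abs{\wh f(\eta_{11})}^2 = 3\abs{f(x_1) - f(y_1)}^2 = 9$ forces $\abs{f(x_1) - f(y_1)} = \sqrt{3}$; the chord of length $\sqrt 3$ on the unit circle subtends angle $2\pi/3$, recovering the same condition on the swap ratios, and the three overall phases remain free. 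Either route gives the claim.

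The main obstacle will be the bookkeeping of which element of $G$ swaps which orbit — one must be careful that each nontrivial $\al \in G$ fixes exactly one of the three orbits pointwise and swaps the other two, so that all three balancedness conditions are genuinely active and compatible; once that is set up correctly, the rest is a short computation. I expect the only subtlety to be confirming that the resulting family is truly infinite (i.e.\ that the free phase parameters produce genuinely distinct functions, not ones differing by an irrelevant global constant that might collapse to finitely many up to equivalence), which follows since distinct phase assignments give distinct functions $X \to T$.
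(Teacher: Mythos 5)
Your proposal is correct, and both of your routes arrive at exactly the characterization the paper obtains: writing $c_i = f(x_i)$, $d_i = f(y_i)$, a function $f:X\to T$ is $G$-bent precisely when $d_i = \bigl(-\tfrac12 \pm \tfrac{\sqrt{-3}}{2}\bigr)c_i$ for $i=1,2,3$, with the phases $c_i \in T$ free, whence infinitely many bent functions. Your second route is essentially the paper's own proof: the paper computes $\sum_{\l\in\wh X_{\psi_j}}|\wh f(\l)|^2$ for $j=0,1,2,3$ from the explicit \sgd\ of Eqn \eqref{eq-4-xhat-2} and equates the principal and non-principal sums to force $\ol{c_i}d_i + c_i\ol{d_i} = -1$; you instead set each non-principal sum directly equal to $|X|^2/|G| = 9$ and invoke Eqn \eqref{def-bent'}, which is marginally more direct but the same computation. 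Your first route, via Theorem \ref{thm-bent}, is genuinely different in flavor: it bypasses the dual set entirely and reduces the bent condition to the three real equations $\cos\theta_j + \cos\theta_k = -1$ over the pairs of swapped orbits, from which $\cos\theta_1=\cos\theta_2=\cos\theta_3=-\tfrac12$ follows by subtraction. This derivative argument is more elementary and makes the combinatorics of which group element swaps which orbit transparent, while the paper's Fourier computation has the advantage of setting up the machinery (Eqns \eqref{eq-f6-psi0}--\eqref{eq-f6-psi3}) that is reused verbatim in Lemma \ref{lem-bent-8} and in the proof of Theorem \ref{thm-klein4}. Your closing remarks on the orbit bookkeeping and on why the family is genuinely infinite are exactly the right points to verify, and both check out.
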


\begin{proof}
Let $f: X \to T$ be a function,
and $c_i := f(x_i)$,  $d_i := f(y_i)$, $1 \le i \le 3$. Then
by Eqn \eqref{eq-4-xhat-2}, 
\begin{eqnarray}
\label{eq-f6-psi0}
\sum_{\l \in \wh X_{\psi_0}} \bigabs{\wh f(\l)}^2
=\sum_{i=1}^3\lrabs{\sum_{x\in X}f(x)\l_{i1}(x)}^2  
  =  3 \sum_{i=1}^3 \abs{c_i+d_i}^2 =
18+ 3 \sum_{i=1}^3 (\ol{c_i} d_i+ c_i \ol{d_i}),
\end{eqnarray}
and 
\begin{eqnarray}
\label{eq-f6-psi1}
\sum_{\eta \in \wh X_{\psi_1}} \bigabs{\wh f(\eta)}^2
=\lrabs{\sum_{x\in X}f(x)\eta_{11}(x)}^2 
 =  3 \abs{c_1 - d_1}^2 
= 6 - 3 (\ol{c_1} d_1 + c_1 \ol{d_1}).
\end{eqnarray}
Similarly,
\begin{equation}
\label{eq-f6-psi3}
\sum_{\eta \in \wh X_{\psi_2}} \bigabs{\wh f(\eta)}^2 = 
6 - 3 (\ol{c_2} d_2 + c_2 \ol{d_2}), \quad
\sum_{\eta \in \wh X_{\psi_3}} \bigabs{\wh f(\eta)}^2 =
 6 - 3 (\ol{c_3} d_3 + c_3 \ol{d_3}).
\end{equation}

Now assume that $f$ is \gbent. Then Eqn \eqref{eq-f6-psi1} and Eqn \eqref{eq-f6-psi3} imply that
$$
\ol{c_1} d_1 + c_1 \ol{d_1} = \ol{c_2} d_2 + c_2 \ol{d_2} =
\ol{c_3} d_3 + c_3 \ol{d_3}.
$$
Thus, Eqn \eqref{eq-f6-psi0} and Eqn \eqref{eq-f6-psi1} imply that 
$$
18 + 9(\ol{c_1} d_1 + c_1 \ol{d_1}) = 6 - 3 (\ol{c_1} d_1 + c_1 \ol{d_1}).
$$
So $\ol{c_1} d_1 + c_1 \ol{d_1} = -1$, and hence $\ol{c_1} d_1 = -1/2 + b \sqrt{-1}$ for some 
real number $b$. Therefore,  $\abs{\ol{c_1} d_1} = 1$ forces that $\ol{c_1} d_1 = 
-1/2 \pm \sqrt{-3}/2$, and 
$d_1 = ( -1/2 \pm \sqrt{-3}/2 )c_1$. Similarly, 
$d_2 = ( -1/2 \pm \sqrt{-3}/2 )c_2$, and $d_3 = ( -1/2 \pm \sqrt{-3}/2 )c_3$.
On the other hand, assume that
$d_1 = ( -1/2 \pm \sqrt{-3}/2 )c_1$, 
$d_2 = ( -1/2 \pm \sqrt{-3}/2 )c_2$, and $d_3 = ( -1/2 \pm \sqrt{-3}/2 )c_3$.
 Then it is easy to verify that
$$
\sum_{\l \in \wh X_{\psi_0}} \bigabs{\wh f(\l)}^2 = 9, \
\sum_{\eta \in \wh X_{\psi_j}} \bigabs{\wh f(\eta)}^2 = 9,  \quad 1 \le j \le 3.
$$
So $f$ is \gbent, and the lemma holds.
\end{proof}

\begin{lemma}
\label{lem-bent-8}
With the notation in Notation \ref{nota-orbit}, if $\abs{X} = 8$ and $p, q, r$ are all 
nonzero, then there are infinitely many \gbf s on $X$.
\end{lemma}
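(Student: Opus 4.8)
The plan is to reduce the statement to a short Fourier computation on the $G$-dual set $\wh X$ of Eqn \eqref{eq-4-xhat-2}. Since every orbit in Notation \ref{nota-orbit} has length $2$, the hypothesis $\abs{X}=8$ gives $p+q+r=4$; as $p,q,r$ are all positive, $(p,q,r)$ is a permutation of $(2,1,1)$. The generators $\al,\beta,\g$ of the Klein group — and correspondingly the characters $\psi_1,\psi_2,\psi_3$ — play symmetric roles, so it suffices to treat one case, say $p=2$, $q=r=1$; then, in the notation set up after Eqn \eqref{eq-4-xhat-2}, $\wh X_{\psi_1}=\{\eta_{11},\eta_{12}\}$ with supports $X_{\al,1},X_{\al,2}$, $\wh X_{\psi_2}=\{\eta_{21}\}$ with support $X_{\beta,1}$, and $\wh X_{\psi_3}=\{\eta_{31}\}$ with support $X_{\g,1}$.

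For a function $f:X\to T$ I would write $c_i:=f(x_i)$ and $d_i:=f(y_i)$ for $1\le i\le 4$ (all in $T$). Using $\abs{X}/2=4$ and the prescribed values of the $\eta$'s one computes $\wh f(\eta_{1i})=2(c_i-d_i)$ for $i=1,2$, $\wh f(\eta_{21})=2(c_3-d_3)$, and $\wh f(\eta_{31})=2(c_4-d_4)$. Since $\bigabs{c_i-d_i}^2=2-(\ol{c_i}d_i+c_i\ol{d_i})$ for $c_i,d_i\in T$, each of the three sums $\sum_{\l\in\wh X_{\psi_j}}\bigabs{\wh f(\l)}^2$, $j=1,2,3$, is an explicit affine expression in the quantities $\ol{c_i}d_i+c_i\ol{d_i}$. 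By the bent criterion \eqref{def-bent'} (here $\abs{X}^2/\abs{G}=16$), $f$ is $G$-bent if and only if all three of these sums equal $16$, and a routine simplification rewrites this as the three conditions $\ol{c_3}d_3+c_3\ol{d_3}=-2$, $\ol{c_4}d_4+c_4\ol{d_4}=-2$, and $(\ol{c_1}d_1+c_1\ol{d_1})+(\ol{c_2}d_2+c_2\ol{d_2})=0$; the $\psi_0$-sum is then automatically $16$ by \cite[Corollary 3.5]{fx}.

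It remains to exhibit infinitely many solutions. For $c,d\in T$ the equation $\ol c d+c\ol d=-2$ is equivalent to $d=-c$, so the first two conditions say $d_3=-c_3$ and $d_4=-c_4$, and the third is a single equation on $c_1,d_1,c_2,d_2$. Taking $c_1=c_2=c_3=c_4=1$, $d_3=d_4=-1$, $d_1=\zeta$, $d_2=-\zeta$ for an arbitrary $\zeta\in T$ satisfies all three conditions, and distinct $\zeta$ give distinct $f$ (they differ at $y_1$); hence $X$ carries infinitely many $G$-bent functions. I do not anticipate a real obstacle: the computation is mechanical, and the only points needing care are aligning the indexing of Eqn \eqref{eq-4-xhat-2} with the case $p=2$, $q=r=1$ and noting that the other two permutations of $(p,q,r)$ follow by relabelling the Klein-group generators.
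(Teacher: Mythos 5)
Your proposal is correct and follows essentially the same route as the paper: the same reduction to $p=2$, $q=r=1$, the same Fourier computation on the dual set of Eqn \eqref{eq-4-xhat-2} leading to the conditions $\ol{c_3}d_3+c_3\ol{d_3}=\ol{c_4}d_4+c_4\ol{d_4}=-2$ and $(\ol{c_1}d_1+c_1\ol{d_1})+(\ol{c_2}d_2+c_2\ol{d_2})=0$, and an explicit one-parameter family of solutions. Your use of \eqref{def-bent'} to make the $\psi_0$-sum automatic is a minor streamlining of the paper's bookkeeping, not a different argument.
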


\begin{proof}
Without loss of generality, we may assume that $p = 2$, $q = r = 1$.
Let $f: X \to T$ be a function, and $c_i := f(x_i)$,  $d_i := f(y_i)$, $1 \le i \le 4$. Then
by Eqn \eqref{eq-4-xhat-2}, 
\begin{eqnarray}
\label{eq-f-psi0}
\sum_{\l_{ij} \in \wh X_{\psi_0}} \bigabs{\wh f(\l_{ij})}^2
=\sum_{i, j}\lrabs{\sum_{x\in X}f(x)\l_{ij}(x)}^2  
  =  4 \sum_{i=1}^4 \abs{c_i+d_i}^2 =
32+ 4 \sum_{i=1}^4 (\ol{c_i} d_i+ c_i \ol{d_i}),
\end{eqnarray}
and 
\begin{eqnarray}
\label{eq-f-psi1}
\sum_{\eta_{1i} \in \wh X_{\psi_1}} \bigabs{\wh f(\eta_{1i})}^2
=\sum_{i=1}^2 \lrabs{\sum_{x\in X}f(x)\eta_{1i}(x)}^2 
 =  4 \sum_{i=1}^2 \abs{c_i - d_i}^2 
= 16 - 4 \sum_{i=1}^2 (\ol{c_i} d_i + c_i \ol{d_i}).
\end{eqnarray}
Similarly,
\begin{equation}
\label{eq-f-psi3}
\sum_{\eta_{2i} \in \wh X_{\psi_2}} \bigabs{\wh f(\eta_{2i})}^2 = 
8 - 4 (\ol{c_3} d_3 + c_3 \ol{d_3}), \quad
\sum_{\eta_{3i} \in \wh X_{\psi_3}} \bigabs{\wh f(\eta_{3i})}^2 =
 8 - 4 (\ol{c_4} d_4 + c_4 \ol{d_4}).
\end{equation}

Now assume that $f$ is \gbent. Then Eqn \eqref{eq-f-psi3} and Eqn \eqref{eq-f-psi1} imply that
$$
\ol{c_3} d_3 + c_3 \ol{d_3} = \ol{c_4} d_4 + c_4 \ol{d_4} \quad \hbox{and} \quad
\sum_{i=1}^2(\ol{c_i} d_i+ c_i \ol{d_i}) = 2 + \ol{c_3} d_3 + c_3 \ol{d_3}.
$$
Thus, from Eqn \eqref{eq-f-psi0} we see that 
$$
40 + 12(\ol{c_3} d_3 + c_3 \ol{d_3}) = 8 - 4 (\ol{c_3} d_d + c_3 \ol{d_3}).
$$
So $\ol{c_3} d_3 + c_3 \ol{d_3} = -2$, and hence $\ol{c_3} d_3 = -1 + b \sqrt{-1}$ for some 
real number $b$. Therefore,  $\abs{\ol{c_3} d_3} = 1$ forces that $\ol{c_3} d_3 = -1$, and 
$c_3 = - d_3$. Similarly, $c_4 = -d_4$. That is, we have proved that
\begin{equation}
\label{eq-c3-c4}
\hbox{if $f$ is \gbent, then } f(x_3) = - f(y_3), \hbox{ and } f(x_4) = - f(y_4).
\end{equation}

On the other hand, let $f$ be a \gbf\ such that $f(x_3) = - f(y_3)$ and $f(x_4) = - f(y_4)$.
Furthermore, if $f(x_1) =  f(x_2)$ and $f(y_1) = - f(y_2)$, then from \eqref{eq-f-psi0} --
\eqref{eq-f-psi3}, it is easy to verify that
$$
\sum_{\l_{ij} \in \wh X_{\psi_0}} \bigabs{\wh f(\l_{ij})}^2 = 16, \
\sum_{\eta_{ji} \in \wh X_{\psi_j}} \bigabs{\wh f(\eta_{ji})}^2 = 16,  \quad 1 \le j \le 3.
$$
Hence, $f$ is a \gbf. Similarly, $f$ is also a \gbf\ if $f(x_1) = f(y_1)$, 
$f(x_2) = - f(y_2)$ or $f(x_1) = - f(y_1)$, $f(x_2) = f(y_2)$.
Therefore, there are infinitely many \gbf s on $X$.
\end{proof}

The next lemma follows directly from Theorem \ref{thm-bent}.

\begin{lemma}
\label{lem-extend}
Let $G$ be a \fag\ acting on a finite set $X$. Assume that $X$ is the disjoint union
of its $G$-stable subsets $X_i$, $1 \le i \le p$. If there exists a \gbf\ on $X_i$
for any $1 \le i \le p$, then there exists a \gbf\ on $X$.
\end{lemma}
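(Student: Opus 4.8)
The plan is to apply the balancedness characterization of Theorem~\ref{thm-bent} to the obvious glued function. Since the blocks $X_i$ are disjoint and cover $X$, any choice of functions on the pieces assembles into a single function on $X$. So first I would use the hypothesis to pick, for each $i$, a \gbf\ $f_i: X_i \to T$, and then define $f: X \to T$ by declaring $f|_{X_i} := f_i$ for all $1 \le i \le p$. This $f$ is a well-defined map into $T$, and the only remaining task is to show that $f$ is \gbent, which by Theorem~\ref{thm-bent} amounts to checking that $f_\al'$ is balanced for every $\al \in \gi$.

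The key point is that $G$-stability of each block lets the derivative be computed blockwise. Indeed, since $X_i$ is a union of $G$-orbits, for any $\al \in G$ and $x \in X_i$ we have $\al x \in X_i$; hence both $x$ and $\al x$ lie in the same block, and $f_\al'(x) = f(\al x) f(x)^{-1} = f_i(\al x) f_i(x)^{-1} = (f_i)_\al'(x)$ for all $x \in X_i$. Thus the restriction of $f_\al'$ to $X_i$ coincides with $(f_i)_\al'$. Now fix $\al \in \gi$. Because each $f_i$ is \gbent, Theorem~\ref{thm-bent} gives $\sum_{x \in X_i} (f_i)_\al'(x) = 0$ for every $i$, so splitting the sum over $X$ along the partition $X = \bigcup_{i=1}^p X_i$ yields
\begin{equation*}
\sum_{x \in X} f_\al'(x) = \sum_{i=1}^p \sum_{x \in X_i} (f_i)_\al'(x) = \sum_{i=1}^p 0 = 0.
\end{equation*}
Hence $f_\al'$ is balanced for every $\al \in \gi$, and Theorem~\ref{thm-bent} then shows that $f$ is \gbent.

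There is essentially no hard step: the whole argument is that balancedness of the derivative is additive over a $G$-stable partition. The single place where a hypothesis is genuinely used (beyond the existence of the $f_i$) is the $G$-stability of each $X_i$, which guarantees $\al x \in X_i$ whenever $x \in X_i$; this is exactly what makes $(f_i)_\al'$ well defined on all of $X_i$ and equal to the restriction of $f_\al'$ there. Without $G$-stability the blocks would not be invariant under the action, the derivative would mix different pieces, and the blockwise vanishing could not be summed.
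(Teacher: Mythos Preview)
Your argument is correct and is exactly the approach the paper intends: the paper's entire proof is the single remark that the lemma ``follows directly from Theorem~\ref{thm-bent},'' and you have simply written out the details of that derivation. There is nothing to add.
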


Now we are ready to prove Theorem \ref{thm-klein4}.

\begin{proof}[Proof of Theorem \ref{thm-klein4}.]
With the notation in Notation \ref{nota-orbit}, without loss of generality, we may assume that 
$r \le p, q$. Then Eqn \eqref{eq-g-lin} is equivalent to $p + q \le 3r$.  

Assume that there is a \gbf\ $f: X \to T$. Let $c_i = f(x_i)$, and
$d_i = f(y_i)$, $1 \le i \le p + q + r$. Then by Eqn \eqref{eq-4-xhat-2},
\begin{eqnarray}
\label{eq-psi1}
\sum_{\l_{ij} \in \wh{X}_{\psi_0}} \bigabs{\wh f(\l_{ij})}^2 & = &
\sum_{i, j} \lrabs{\sum_{x\in X}f(x)\l_{ij}(x)}^2  
  =  (p + q + r) \sum_{i=1}^{p + q +r} \abs{c_i + d_i}^2  \nonumber\\
& = & 2(p + q + r)^2 + (p + q + r) \sum_{i=1}^{p + q +r}(\ol{c_i} d_i + c_i \ol{d_i}),
\end{eqnarray}
and 
\begin{eqnarray}
\label{eq-psi2}
\sum_{\eta_{1i} \in \wh{X}_{\psi_1}} \bigabs{\wh f(\l_{1i})}^2 & = &
\sum_{i=1}^p \lrabs{\sum_{x\in X}f(x)\eta_{1i}(x)}^2 
 =  (p + q + r) \sum_{i=1}^p \abs{c_i - d_i}^2 \nonumber \\
& = & 2p(p + q + r) - (p + q + r)  \sum_{i=1}^p (\ol{c_i} d_i + c_i \ol{d_i}).
\end{eqnarray}
Similarly,
\begin{equation}
\label{eq-psi3}
\sum_{\eta_{2i} \in \wh{X}_{\psi_2}} \bigabs{\wh f(\l_{2i})}^2  = 
2q(p + q + r) - (p + q + r)  \sum_{i= p + 1}^{p + q} (\ol{c_i} d_i + c_i \ol{d_i}), 
\end{equation}
and
\begin{equation}
\label{eq-psi4}
\sum_{\eta_{3i} \in \wh{X}_{\psi_3}} \bigabs{\wh f(\l_{3i})}^2  =  
2r(p + q + r) - (p + q + r)  \sum_{i=p+q+1}^{p+q+r} (\ol{c_i} d_i + c_i \ol{d_i}).
\end{equation}
Since $f$ is \gbent, it follows from \eqref{def-bent'} and
\eqref{eq-psi2} -- \eqref{eq-psi4} that
$$
\sum_{i=1}^p (\ol{c_i} d_i + c_i \ol{d_i}) = 2(p-r) + 
\sum_{i=p+q+1}^{p+q+r} (\ol{c_i} d_i + c_i \ol{d_i}), \quad
\sum_{i=p+1}^{p+q} (\ol{c_i} d_i + c_i \ol{d_i}) = 2(q-r) + 
\sum_{i=p+q+1}^{p+q+r} (\ol{c_i} d_i + c_i \ol{d_i}).
$$
Thus, Eqn \eqref{eq-psi1} and Eqn \eqref{eq-psi4} imply that 
$$
2(p + q + r) + 2(p-r) + 2(q-r) + 3 \sum_{i=p+q+1}^{p+q+r} (\ol{c_i} d_i + c_i \ol{d_i})
= 2r - \sum_{i=p+q+1}^{p+q+r} (\ol{c_i} d_i + c_i \ol{d_i}).
$$
Hence,
$$
p + q - r = - \sum_{i=p+q+1}^{p+q+r} (\ol{c_i} d_i + c_i \ol{d_i}) =
\lrabs{\sum_{i=p+q+1}^{p+q+r} (\ol{c_i} d_i + c_i \ol{d_i})} \le 2r.
$$
So Eqn \eqref{eq-g-lin} holds.

On the other hand, assume that Eqn \eqref{eq-g-lin} holds. Then by our assumption at the
beginning of the proof, $r \le p, q$ and $p + q \le 3r$. Let $A = \{ X_{\al, i} \nid 1 \le i
\le p \}$, $B = \{ X_{\beta, i} \nid 1 \le i \le q \}$, and $C = \{ X_{\g, i} \nid 1 \le i
\le r \}$. Since $r \le p, q$ and $p + q \le 3r$,
the orbits of $G$ can be divided into $r$ families as follows. 
Each family consists of three or four orbits, with one orbit from each of $A, B$, 
and $C$, and possibly one more orbit from either $A$ or $B$ if the family consists 
of four orbits. Since there exist infinitely many \gbf s on these families by
Lemmas \ref{lem-bent-6} and \ref{lem-bent-8}, 
 there exists infinitely many \gbf s on $X$ by Lemma \ref{lem-extend}.
\end{proof}

\baselineskip=14pt
\section*{\large Acknowledgments} \small

Part of this work was done when the first author
was visiting the second author at Eastern Kentucky University
in 2014. The first author is grateful for the hospitality.
The first author is supported by NSFC through the grant number 11271005.

\begin{flushleft}

\end{flushleft}


\begin{thebibliography}{11}
\small
\setlength{\itemsep}{0pt}

\bibitem{ab}
J. L. Alperin, R. B. Bell, Groups and Representations,
GTM 162, Springer-Verlag, New York,  1997.


\bibitem{ad}
 K. T. Arasu, C. Ding, T. Helleseth, P.V. Kumar, H. Martinsen: 
 Almost difference sets and their sequences with optimal autocorrelations. 
 IEEE Trans. Inform. Theory {\bf 47(7)}(2001), 2934 -- 2943.

\bibitem{bjl}
T. Beth, D. Jungnickel, H. Lenz, 
Design Theory, 2nd edn. Cambridge University Press, Cambridge, 1999.

\bibitem{cd} 
C. Carlet and C. Ding,
Highly Nonlinear Mappings, J. Complexity {\bf 20}(2004), 205 -- 244.

\bibitem{d}  J. F. Dillon, Elementary Hadamard Difference Sets, Ph.D. Thesis, University of Maryland, 1974.

\bibitem{dp}
J. A. Davis and L. Poinsot, $P$-\pnf s, Des. Codes Cryptogr.
{\bf 46} (2008), 83 -- 96. 

\bibitem{fx}
Y. Fan and B. Xu, 
Fourier transforms and bent functions on faithful actions of finite abelian
groups, Des. Codes Cryptogr. (2016), DOI: 10.1007/s10623-016-0177-8.  

\bibitem{h}
B. Huppert,  Character Theory of Finite Groups, Walter de Gruyter \& Co., Berlin, 1998.

\bibitem{lsy} 
O. A. Logachev, A. A. Salnikov, and V. V. Yashchenko,
Bent functions over a finite abelian group,
Discrete Math. Appl. {\bf 7}(1997), 547 -- 564.

\bibitem{ph} 
L. Poinsot and S. Harari,
Group actions based perfect nonlinearity,
GESTS Int. Trans. Comput. Sci. Eng. {\bf 12}(2005), 1 -- 14.

\bibitem{p06} L. Poinsot,
Bent functions on a finite nonabelian group,
J. Discrete Math. Sci. Cryptogr. {\bf 9}(2006), 349 -- 364.

\bibitem{p09}
 L. Poinsot,
A new characterization of group action-based perfect nonlinearity,
Discrete Appl. Math. {\bf 157}(2009), 1848 -- 1857.

\bibitem{pp} 
L. Poinsot and A. Pott,
Non-boolean almost perfect nonlinear functions on non-abelian groups,
Internat. J. Found. Comput. Sci. {\bf 22}(2011), 1351 -- 1367.

\bibitem{pott} 
A. Pott,
Nonlinear functions in abelian groups and relative diference sets,
in: Optimal Discrete Structures and Algorithms, ODSA 2000,
Discrete Appl. Math. {\bf 138}(2004), 177 -- 193.

\bibitem{s} J.-P. Serre, Representations of Finite Groups,
GTM, Springer-Verlag, New York, 1984.

\bibitem{s02}
V. I. Solodovnikov, Bent functions from a finite abelian group
to a finite abelian group, Diskret. Mat. {\bf 14}(2002), 99 -- 113.

\bibitem{xu1} B. Xu,
Multidimensional Fourier transforms and nonlinear functions on finite groups,
 Linear Algebra Appl. 452 (2014), 89 -- 105.

\bibitem{xu2} B. Xu,
Bentless and nonlinearity of functions on finite groups,
Des. Codes Cryptogr.  76 (2015), 409 -- 430. 

\bibitem{xu3} B. Xu,
Dual bent functions on finite groups and $C$-algebras,
 J. Pure Appl. Algebra 220 (2016), 1055 -- 1073.

\end{thebibliography}
\end{document}